\newtheorem{theorem}{Theorem}[section]  
\newtheorem{definition}{Definition}[section]
\newtheorem{lemma}{Lemma}[section]
\newtheorem{proposition}{Proposition}[section]
\newtheorem{cor}[theorem]{Corollary}
\newtheorem*{conjecture}{Conjecture}
\newtheorem{remark}{Remark}[section]
\title{On the Lipschitz continuity of the Spherical Cap Discrepancy around generic point sets}
\author{H. Heitsch \and R. Henrion }
\date{\today}
\begin{document}

\maketitle
\begin{abstract}
\noindent
The spherical cap discrepancy is a prominent measure of uniformity for sets on the d-dimensional sphere. It is particularly important for estimating the integration error for certain classes of functions on the sphere. Building on a recently proven explicit formula for the spherical discrepancy, we show as a main result of this paper that this discrepancy is Lipschitz continuous in a neighbourhood of so-called generic point sets (as they are typical outcomes of Monte-Carlo sampling). This property may have some impact (both algorithmically and theoretically for deriving necessary optimality conditions) on optimal quantization, i.e., on finding point sets of fixed size on the sphere having minimum spherical discrepancy.
\end{abstract}
{\bf Keywords:} Spherical cap discrepancy, Uniform distribution on sphere, Lipschitz continuity, Necessary optimality conditions.

\noindent
{\bf MSC:} 11K38, 49K30

\section{Introduction}

Point sets uniformly located on the classical or higher dimensional sphere are of much interest in many disciplines of mathematics. As examples we refer to point cloud interpolation in computer vision
\cite{Nguyen2023} or to optimization problems with chance constraints using the so-called spherical-radial decomposition of elliptically distributed (e.g., Gaussian) random vectors \cite{vanAckooij_Henrion_2014}. Uniformity of point sets on the sphere can be characterized by various criteria, e.g., the sum of pairwise distances (which should be large) or by its Coulomb energy (which should be small). If the focus is on estimating the integration error when replacing a spherical integral of a function by an average function value on the spherical point set, then the so-called {\it spherical cap discrepancy} is a natural measure of goodness for the uniformity of this point set \cite{Grabner1993,Aistleitner2012,brauchart2014}. Contrary to the criteria mentioned above, the spherical cap discrepancy (being defined as a supremum of infinitely many local discrepancies) is originally not endowed with an explicit formula which could be used for its numerical computation or for its minimization as a function of the point set. This did not harm theoretical investigations in the context of the construction of low discrepancy sequences but it became obstructive in numerical experiments. A possible remedy consisted in reducing the supremum to a maximum over finitely many local discrepancies (e.g. \cite[p.1005]{Aistleitner2012}), but, of course, this provides just a lower bound which might deviate considerably from the true value \cite[p.13]{heitsch-henrion}. A certainly more precise algorithmic approximation was provided in \cite{Bakhshizadeh2020}, but still it was not based on an exact formula and moreover restricted to the classical two-dimensional sphere. In \cite{heitsch-henrion}, a precise enumerative formula for the spherical cap discrepancy was derived, which reduced the supremum over an infinite family of local discrepancies to a finite maximum of fully explicit and numerically easy to compute expressions. Not surprisingly, this formula suffers from a poor complexity. Nonetheless, it could be used for calibration purposes for moderate sizes of the point set and small dimensions of the sphere (in \cite{heitsch-henrion}, sets with 2000 points in the two-dimensional sphere to 100 points in the five-dimensional sphere were considered). For a practical application of this formula in image analysis, we refer to \cite{Nguyen2023}.

It turns out that, apart from its numerical use, the mentioned formula maybe of interest in characterizing the spherical cap discrepancy as a function of the point set. This observation is based on the fact that the finitely many expressions whose maximum constitutes the spherical cap discrepancy are fully explicit functions of the point set. This allows us, beyond proving the continuity of the spherical cap discrepancy by elementary arguments, to verify even its Lipschitz continuity around so-called {\it generic} point sets. The latter refers to point sets on the sphere for which each selection of cardinality not larger than the space dimension is linearly independent. Such point sets are typical outcomes of Monte-Carlo (but not of Quasi Monte-Carlo) sampling. The main argument for proving Lipschitz continuity relies on the fact that, locally around a generic point set, the spherical cap discrepancy can be represented as a continuous selection of ${\mathcal{C}^1}$-functions (see \cite{Scholtes2012}). Moreover, we are able to provide explicitly computable Lipschitz constants. This might be of interest in the application of global optimization methods for minimizing the spherical cap discrepancy (optimal quantization) for a fixed sample size. Note that low discrepancy sequences whose design on the sphere is an active field of research have nice asymptotic properties but do not guarantee optimality for a fixed sample size. Apart from algorithmic relevance, the proven Lipschitz continuity paves a way for establishing necessary optimality conditions in optimal quantization on the sphere by means of the Clarke subdifferential \cite{clarke1983}.

The paper is organized as follows: Section 2 briefly introduces some basic concepts, presents some simple preliminary results needed later on and proves the continuity of the spherical cap discrepancy. In Section 3, a representation formula for the spherical cap discrepancy as a maximum of finitely many (explicit) functions around generic point sets is proven. In Section 4, an extended cap discrepancy is introduced and its Lipschitz continuity around generic point sets is verified. As a trivial consequence, the same property for the original discrepancy is derived as the main result of the paper. Section 5 briefly describes how the previous results could applied in order to derive necessary conditions for optimal quantization with respect to the spherical cap discrepancy.

\section{Basic concepts and continuity of the spherical cap discrepancy}

We start by defining the following family of subsets of $\mathbb{R}^d$:
\[
H(w,t):=\left\{ x\in \mathbb{R}^{d}|\left\langle w,x\right\rangle \geq
t\right\} \quad \left( w\in \mathbb{R}^{d},\,t\in \mathbb{R}
\right). 
\]
If $w\neq 0$, then $H(w,t)$ represents a closed half space in $\mathbb{R}^{d}$, otherwise it coincides with either $\mathbb{R}^{d}$ or the empty set depending on whether $t\leq 0$ or $t>0$. With each of these sets, we associate its so-called cap measure on the sphere:
\[
\mu ^{cap}\left( w,t\right):=\sigma \left( \mathbb{S}^{d-1}\cap
H(w,t)\right)\quad (\sigma =\mbox{ law of uniform distribution on } \mathbb{S}^{d-1}),
\]
where $\mathbb{S}^{d-1}$ refers to the ($d-1$)-dimensional Euclidean unit sphere in $\mathbb{R}^{d}$. We assume in the following that $d\geq 2$.

For a matrix $X=\left(x^{(1)},\ldots ,x^{(N)}\right)$ of order $(d,N)$ with $N\geq 1$ representing a set of points $\left\{x^{(1)},\ldots ,x^{(N)}\right\}\subseteq\mathbb{S}^{d-1}$, the empirical measure induced from this point set assigns to the set $\mathbb{S}^{d-1}\cap H(w,t)$ its empirical probability
\[
\mu ^{emp}\left( X,w,t\right):=N^{-1}\cdot \#\left\{ i\in \left\{
1,\ldots ,N\right\} |x^{(i)}\in\mathbb{S}^{d-1}\cap H(w,t)\right\}=N^{-1}\cdot \#\left\{ i\in \left\{
1,\ldots ,N\right\} |x^{(i)}\in H(w,t)\right\}.
\]
As a side remark we note that the following relation is immediate from the definition:
\begin{equation}\label{empcomp}
\mu ^{emp}\left( X,w,t\right) +\mu ^{emp}\left( X,-w,-t\right)=1+N^{-1}\cdot\#\left\{ i\in \left\{
1,\ldots ,N\right\} |\langle w,x^{(i)}\rangle=t\right\}.
\end{equation}
In order to measure the uniformity of a point set on the sphere, one might compare the deviation between its cap measure and empirical measure on all sets $\mathbb{S}^{d-1}\cap H(w,t)$:
\begin{equation}\label{initialdisc}
\Delta^0\left(X\right) := \sup_{w\in\mathbb{R}^{d},\,t\in\mathbb{R}}\left\vert \mu ^{emp}\left( X,w,t\right)
-\mu ^{cap}\left(w,t\right) \right\vert\quad \big(X\in\left(\mathbb{S}^{d-1}\right)^N\big).    
\end{equation}
Clearly, the smaller $\Delta^0$, the better both measures coincide on the chosen family of sets. Such quantities are called discrepancies. If one restricts the family of sets $H(w,t)$ to those with $(w,t)\in\mathbb{S}^{d-1}\times [-1,1]$, then one obtains the so-called {\it spherical cap discrepancy} (e.g., \cite{brauchart2014})
\begin{equation}\label{capdef0}
\Delta \left(X\right) := \sup_{w\in\mathbb{S}^{d-1},\,t\in \left[ -1,1\right]}\left\vert \mu ^{emp}\left( X,w,t\right)
-\mu ^{cap}\left(w,t\right) \right\vert\quad \big(X\in\left(\mathbb{S}^{d-1}\right)^N\big).
\end{equation}
Observe, that for $(w,t)\in\mathbb{S}^{d-1}\times [-1,1]$, the sets $H(w,t)$ represent closed half spaces with normal vector $w$ and height $t$. Their intersections $\mathbb{S}^{d-1}\cap H(w,t)$, on which the empirical measure and the uniform distribution are compared, are nonempty and called {\it spherical caps}.
Some authors define the spherical cap discrepancy by using open half spaces instead, i.e., by imposing the strict inequality $\langle w,x\rangle >t$ in the definition of $H(w,t)$ (e.g., \cite{Etayo2021}). One could formally refer to this alternative definition as a discrepancy $\Delta^1(X)$. It is easy to see that all these three discrepancy definitions coincide, i.e., $\Delta (X)=\Delta^0 (X)=\Delta^1 (X)$. We provide a proof in Proposition \ref{disccoinc} of the appendix for the reader's convenience. We shall base this paper on the representation \eqref{capdef0}, but occasionally, the equality with \eqref{initialdisc} may turn out to be useful.

If $w\in\mathbb{S}^{d-1}$, then the cap measure does not depend on $w$ and we simply write $\mu ^{cap}\left(t\right):=\mu ^{cap}\left( w,t\right)$. In this case, the following explicit formula is well known (e.g., \cite{li2011}):
\begin{equation}
\mu ^{cap}\left(t\right)=\left\{ 
\begin{array}{clc}
C_{d}\displaystyle\int_{0}^{\arccos (t)}\sin ^{d-2}(\tau )d\tau , & \quad %
\mbox{if} & 0\leq t\leq 1, \\ 
\displaystyle1-C_{d}\int_{0}^{\arccos (-t)}\sin ^{d-2}(\tau )d\tau , & \quad %
\mbox{if} & -1\leq t<0,%
\end{array}%
\right.  \label{capmeasure}
\end{equation}%
where 
\begin{equation}\label{cddef}
C_{d}:=\frac{1}{\int_{0}^{\pi }\sin ^{d-2}(\tau )d\tau }
\end{equation}%
is some normalizing constant. It follows immediately from \eqref{capmeasure} that $\mu ^{cap}$ is continuous and that
\begin{equation}\label{capcomp}
\mu ^{cap}(t)=1-\mu ^{cap}(-t)\quad\forall t\in [-1,1].    
\end{equation}
Therefore, we shall work from now on with the following form of \eqref{capdef0}:
\begin{equation}\label{capdef}
\Delta \left(X\right) := \sup_{w\in\mathbb{S}^{d-1},\,t\in \left[ -1,1\right]}\left\vert \mu ^{emp}\left( X,w,t\right)
-\mu ^{cap}\left(t\right) \right\vert\quad \big(X\in\left(\mathbb{S}^{d-1}\right)^N\big).
\end{equation}
We collect three properties of the spherical cap discrepancy
that are direct consequences of the definition \eqref{capdef}. We observe first, that the supremum in \eqref{capdef} is actually a maximum and that the spherical cap realizing this maximum contains at least one element of the given point set on its relative boundary:
\begin{proposition}[\cite{heitsch-henrion}, Proposition 1 \& 2]\label{maxcap}
Let $X\in\left(\mathbb{S}^{d-1}\right)^N$ be given. Then, there are $w^*\in\mathbb{S}^{d-1}$ and $t^*\in[-1,1]$
such that
\[
\Delta(X) = |\mu^{emp}(X,w^*,t^*) - \mu^{cap}(t^*)|.
\]
Moreover, there exists some $i\in\{1,\ldots ,N\}$ with $\langle w^*,x^{(i)}\rangle =t^*$.
\end{proposition}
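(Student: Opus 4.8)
The plan is to prove the two assertions in turn: first that the supremum in \eqref{capdef} is attained, and then that a maximizing pair $(w^*,t^*)$ can be chosen so that some point of the set lies on the boundary hyperplane $\langle w^*,x\rangle=t^*$.

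For the first part, I would argue by a compactness-plus-semicontinuity argument. The index set $\mathbb{S}^{d-1}\times[-1,1]$ is compact, so it suffices to show that the map $(w,t)\mapsto |\mu^{emp}(X,w,t)-\mu^{cap}(t)|$ is upper semicontinuous (any upper semicontinuous function on a compact set attains its maximum). The term $\mu^{cap}(t)$ is continuous in $t$ by \eqref{capmeasure}, so the issue is the empirical term $\mu^{emp}(X,w,t)=N^{-1}\#\{i:\langle w,x^{(i)}\rangle\ge t\}$. This is a sum of indicator functions $\mathbf 1_{\{\langle w,x^{(i)}\rangle\ge t\}}$, each of which is upper semicontinuous as a function of $(w,t)$ because the defining inequality is non-strict (the superlevel set $\{(w,t):\langle w,x^{(i)}\rangle\ge t\}$ is closed). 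Hence $\mu^{emp}(X,\cdot,\cdot)$ is upper semicontinuous, but the absolute value of an upper semicontinuous function need not be upper semicontinuous, so I cannot conclude directly. I would instead split $|a|=\max\{a,-a\}$ and handle the two one-sided suprema separately. For $\sup(\mu^{emp}-\mu^{cap})$, upper semicontinuity of the integrand suffices and the maximum is attained. For $\sup(\mu^{cap}-\mu^{emp})$, I would use the complementarity relation: replacing $(w,t)$ by $(-w,-t)$ and invoking \eqref{empcomp} together with \eqref{capcomp}, one sees that $\mu^{cap}(t)-\mu^{emp}(X,w,t)\le \mu^{emp}(X,-w,-t)-\mu^{cap}(-t)$, so this second supremum is dominated by the first type of supremum and is likewise attained. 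The overall maximum of the two attained values gives the desired $(w^*,t^*)$.

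For the second part, suppose a maximizing pair $(w^*,t^*)$ has been found with no point of $X$ on the hyperplane, i.e., $\langle w^*,x^{(i)}\rangle\ne t^*$ for all $i$. The idea is to perturb $t^*$ to reach a pair that still realizes the maximum but now touches a point. Since only finitely many inner products $\langle w^*,x^{(i)}\rangle$ occur, there is an open interval of values of $t$ around $t^*$ on which $\mu^{emp}(X,w^*,t)$ is locally constant (equal to its value at $t^*$), while $\mu^{cap}(t)$ varies continuously and, by \eqref{capmeasure}, is in fact monotone. If the maximum value equals $\mu^{emp}-\mu^{cap}$ at $(w^*,t^*)$, decrease $t$ (so that $\mu^{cap}$ decreases); the difference $\mu^{emp}-\mu^{cap}$ does not decrease, hence stays equal to the maximum, until $t$ reaches the largest value $\langle w^*,x^{(i)}\rangle$ strictly below $t^*$ — at that point a point of $X$ sits on the hyperplane and the maximum is still realized. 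If instead the maximum equals $\mu^{cap}-\mu^{emp}$ at $(w^*,t^*)$, one either runs the symmetric argument increasing $t$, or passes to $(-w^*,-t^*)$ via the complementarity identities and reduces to the previous case. In either case one arrives at a maximizing pair meeting the stated condition.

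The main obstacle here is bookkeeping rather than depth: one must be careful that the absolute value destroys naive upper semicontinuity, so the two one-sided suprema genuinely have to be treated separately and tied together through the complementarity identities \eqref{empcomp} and \eqref{capcomp}; and in the perturbation step one must check the correct sign/direction in which to move $t$ so that the relevant signed difference does not decrease, and that the interval of constancy of $\mu^{emp}$ indeed extends up to a hyperplane-hitting value of $t$ (using $t^*\in[-1,1]$ and that the $\langle w^*,x^{(i)}\rangle$ lie in $[-1,1]$ as well). Since this proposition is cited from \cite{heitsch-henrion}, I would expect the paper simply to quote it, but the sketch above is the natural self-contained argument.
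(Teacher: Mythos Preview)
You are right that the paper does not prove this proposition; it simply cites \cite{heitsch-henrion}. Your sketch for the first assertion (attainment of the supremum) is correct and in fact anticipates Corollary~\ref{capdefcor}, which records that $\Delta(X)=\sup_{(w,t)}\big(\mu^{emp}-\mu^{cap}\big)$.

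The second part, however, contains a genuine error: you have the monotonicity of $\mu^{cap}$ backwards. From \eqref{capmeasure} one sees that $\mu^{cap}$ is strictly \emph{decreasing} on $[-1,1]$ (the cap $\{x:\langle w,x\rangle\ge t\}$ shrinks as $t$ grows). Hence if you \emph{decrease} $t$ while $\mu^{emp}$ stays locally constant, the signed difference $\mu^{emp}-\mu^{cap}$ strictly \emph{decreases}; it does not ``stay equal to the maximum'', and the pair you reach at the largest $\langle w^*,x^{(i)}\rangle$ below $t^*$ is in general no longer a maximizer. You even flagged the sign/direction as the main obstacle, and this is precisely where the argument slips.

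The repair is to move $t$ in the opposite direction and to frame the step as a contradiction rather than as ``reaching another maximizer''. Suppose $(w^*,t^*)$ realizes $\Delta(X)=\mu^{emp}(X,w^*,t^*)-\mu^{cap}(t^*)$ with no $x^{(i)}$ on the hyperplane. If $\mu^{emp}(X,w^*,t^*)=0$ the value is nonpositive, contradicting Proposition~\ref{lowerbound2}. Otherwise set $t'':=\min\{\langle w^*,x^{(i)}\rangle:\langle w^*,x^{(i)}\rangle>t^*\}\in(t^*,1]$; then $\mu^{emp}(X,w^*,t'')=\mu^{emp}(X,w^*,t^*)$ while $\mu^{cap}(t'')<\mu^{cap}(t^*)$ strictly, so $\mu^{emp}(X,w^*,t'')-\mu^{cap}(t'')>\Delta(X)$, a contradiction. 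The case where the maximum equals $\mu^{cap}-\mu^{emp}$ reduces to this one via \eqref{empcomp} and \eqref{capcomp}, as you indicated.
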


\noindent
Secondly, we state a general lower bound for $\Delta(X)$
that depends on the space dimension and the number of points, but not on the position of the points
on the sphere.
\begin{proposition}\label{lowerbound2}
Let be $\kappa:=\min\{d,N\}$.
One has that $\Delta (X)\geq \kappa (2N)^{-1}>0$ for all $X\in\left(\mathbb{S}^{d-1}\right)^N$.
\end{proposition}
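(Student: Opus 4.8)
The idea is to exhibit a single admissible pair $(w,t)$ (in fact a pair of complementary half-spaces) at which the empirical measure is forced to deviate from the cap measure by at least $\kappa/(2N)$, and then read off the lower bound from the supremum defining $\Delta$ in \eqref{capdef}.

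First I would select $\kappa=\min\{d,N\}$ of the $N$ points, say $x^{(i_1)},\dots,x^{(i_\kappa)}$. Since $\kappa\le d$, their affine hull has dimension at most $\kappa-1\le d-1$, hence is contained in some affine hyperplane $\{x\in\mathbb{R}^d:\langle w,x\rangle=t\}$ with $w\ne 0$; after rescaling we may take $\|w\|=1$, and then $|t|=|\langle w,x^{(i_1)}\rangle|\le 1$, so both $(w,t)$ and $(-w,-t)$ are admissible in \eqref{capdef}. (Note that one must in general allow $t\ne 0$ here: for $\kappa=d=N$ with linearly independent points there is no hyperplane through the origin containing them, but there is an affine one.)

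Next comes the counting step. Put $q:=\#\{i\in\{1,\dots,N\}:\langle w,x^{(i)}\rangle=t\}$; by construction $q\ge\kappa$. Set $\alpha:=\mu^{emp}(X,w,t)$ and $\gamma:=N^{-1}\#\{i:\langle w,x^{(i)}\rangle>t\}$, so that $\alpha=\gamma+q/N$. From \eqref{empcomp} one gets $\mu^{emp}(X,-w,-t)=1-\gamma$, while \eqref{capcomp} gives $\mu^{cap}(-t)=1-\mu^{cap}(t)$; hence
\[
|\mu^{emp}(X,w,t)-\mu^{cap}(t)|=|\alpha-\mu^{cap}(t)|,\qquad
|\mu^{emp}(X,-w,-t)-\mu^{cap}(-t)|=|\mu^{cap}(t)-\gamma|.
\]
Both quantities are bounded above by $\Delta(X)$, so by the triangle inequality
\[
2\,\Delta(X)\ \ge\ |\alpha-\mu^{cap}(t)|+|\mu^{cap}(t)-\gamma|\ \ge\ |\alpha-\gamma|\ =\ q/N\ \ge\ \kappa/N,
\]
that is, $\Delta(X)\ge\kappa/(2N)$, which is strictly positive since $\kappa\ge 1$.

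I do not anticipate a genuine obstacle; the argument is short. The only points that need a little care are the admissibility bookkeeping (writing the hyperplane through the chosen points with a \emph{unit} normal and a height in $[-1,1]$) and the correct use of \eqref{empcomp} together with \eqref{capcomp} to turn the discrepancy at the complementary half-space into $|\mu^{cap}(t)-\gamma|$. One could equivalently invoke $\Delta=\Delta^1$ and work with the open half-space $\{x:\langle w,x\rangle>t\}$ directly in place of the complementary closed one; the computation is the same.
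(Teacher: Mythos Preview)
Your proof is correct and follows essentially the same approach as the paper: choose a hyperplane through $\kappa$ of the points (the paper phrases this as picking $w\perp x^{(1)}-x^{(j)}$ for $j=2,\dots,\kappa$, you phrase it via the affine hull), then bound $2\Delta(X)$ from below by the sum of the two local discrepancies at $(w,t)$ and $(-w,-t)$ and collapse that sum via \eqref{empcomp} and \eqref{capcomp} to $q/N\ge\kappa/N$. Your extra bookkeeping with $\alpha,\gamma,q$ merely unpacks the same triangle-inequality step that the paper writes in one line.
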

\begin{proof}
Chose some $w\in\mathbb{S}^{d-1}$ such that
$\langle w,x^{(1)}-x^{(j)}\rangle = 0$ for all $j=2,\ldots,\kappa$ and put 
$t:=\langle w, x^{(1)}\rangle$. Then, $|t| \leq 1$, and we have that $\langle w, x^{(i)}\rangle = t$
for $i=1,\ldots,\kappa$. Therefore, owing to \eqref{empcomp} and \eqref{capcomp},
\begin{eqnarray*}
2\Delta(X)&\geq& |\mu^{emp}(X,w,t) - \mu^{cap}(t)| + |\mu^{emp}(X,-w,-t) - \mu^{cap}(-t)|\\&\geq&
|\mu^{emp}(X,w,t) + \mu^{emp}(X,-w,-t) - \mu^{cap}(t) - \mu^{cap}(-t)|\\&=&
|1+N^{-1}\cdot\#\{ i\in \left\{
1,\ldots ,N\right\} |\langle w,x^{(i)}\rangle=t\}-1|
=N^{-1}\cdot\#\{ i\in \left\{
1,\ldots ,N\right\} |\langle w,x^{(i)}\rangle=t\}\geq N^{-1}\kappa .
\end{eqnarray*}
\end{proof}
\noindent
A further property we want to adapt from \cite{heitsch-henrion} is a slightly stronger version
of \cite[Corollary 1]{heitsch-henrion}. We observe that
the empirical measure is always strictly greater than the cap measure for any $(w^*,t^*)$
realizing the spherical cap discrepancy.
\begin{proposition}\label{empgreatercap}
For $(w^*,t^*)$ realizing $\Delta(X)$ in Proposition \ref{maxcap} it holds that
$\mu^{emp}(X,w^*,t^*) > \mu^{cap}(t^*)$.
\end{proposition}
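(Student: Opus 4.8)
The plan is to proceed by contradiction. Suppose that the pair $(w^*,t^*)$ furnished by Proposition \ref{maxcap} satisfies $\mu^{emp}(X,w^*,t^*)\leq \mu^{cap}(t^*)$ instead of the claimed strict reverse inequality. Then the absolute value in $\Delta(X)=|\mu^{emp}(X,w^*,t^*)-\mu^{cap}(t^*)|$ resolves as $\Delta(X)=\mu^{cap}(t^*)-\mu^{emp}(X,w^*,t^*)$. I would then evaluate the local discrepancy at the antipodal pair $(-w^*,-t^*)$, which is again admissible in \eqref{capdef} since $-w^*\in\mathbb{S}^{d-1}$ and $-t^*\in[-1,1]$, and show that it strictly exceeds $\Delta(X)$ --- contradicting that $\Delta(X)$ is the supremum of all such local discrepancies.

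The computation is pure bookkeeping with the two complementation identities already at hand. Writing $m:=\#\{\,i\in\{1,\ldots,N\}:\langle w^*,x^{(i)}\rangle=t^*\,\}$, identity \eqref{empcomp} gives $\mu^{emp}(X,-w^*,-t^*)=1+N^{-1}m-\mu^{emp}(X,w^*,t^*)$, and identity \eqref{capcomp} gives $\mu^{cap}(-t^*)=1-\mu^{cap}(t^*)$. Subtracting, the two constants $1$ cancel and one is left with
\[
\mu^{emp}(X,-w^*,-t^*)-\mu^{cap}(-t^*)=N^{-1}m+\big(\mu^{cap}(t^*)-\mu^{emp}(X,w^*,t^*)\big)=N^{-1}m+\Delta(X).
\]
Here the second assertion of Proposition \ref{maxcap} enters decisively: the maximizing spherical cap carries at least one point of $X$ on its relative boundary, so $m\geq 1$ and hence $N^{-1}m>0$. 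Therefore $|\mu^{emp}(X,-w^*,-t^*)-\mu^{cap}(-t^*)|\geq \mu^{emp}(X,-w^*,-t^*)-\mu^{cap}(-t^*)>\Delta(X)$, which is the desired contradiction.

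I do not anticipate a real obstacle; the argument is short and self-contained, relying only on \eqref{empcomp}, \eqref{capcomp} and the boundary-point statement of Proposition \ref{maxcap}. The one point worth stating carefully is the case distinction: I would assume $\mu^{emp}(X,w^*,t^*)\leq \mu^{cap}(t^*)$, covering the strict and the non-strict alternatives at once, so that no appeal to the strict positivity in Proposition \ref{lowerbound2} is needed. The term $N^{-1}m$ is precisely what produces strictness, improving on the non-strict comparison in \cite[Corollary~1]{heitsch-henrion}.
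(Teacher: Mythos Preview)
Your proof is correct and self-contained, but it differs from the paper's argument. The paper's proof is two lines: it invokes \cite[Corollary~1]{heitsch-henrion} to get the non-strict inequality $\mu^{emp}(X,w^*,t^*)\geq\mu^{cap}(t^*)$, and then rules out equality by appealing to Proposition~\ref{lowerbound2} (equality would force $\Delta(X)=0$). Your route instead assumes $\mu^{emp}(X,w^*,t^*)\leq\mu^{cap}(t^*)$ and manufactures a strictly larger local discrepancy at the antipodal pair $(-w^*,-t^*)$ via the complementation identities \eqref{empcomp}, \eqref{capcomp} together with the boundary-point clause of Proposition~\ref{maxcap}. The trade-off: the paper's proof is shorter but leans on an external reference and on Proposition~\ref{lowerbound2}; yours is a touch longer but entirely internal to the present paper, and the observation that the boundary term $N^{-1}m$ is exactly what upgrades the non-strict comparison to a strict one is a clean explanation of \emph{why} strictness holds.
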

\begin{proof}
By assumption, we have that $ \Delta(X) = |\mu^{emp}(X,w^*,t^*) - \mu^{cap}(t^*)|$.
From \cite[Corollary 1]{heitsch-henrion} we already know that
$\mu^{emp}(X,w^*,t^*) \geq \mu^{cap}(t^*)$. Now, the equality $\mu^{emp}(X,w^*,t^*) = \mu^{cap}(t^*)$
would imply $\Delta(X)=0$, a contradiction with Proposition \ref{lowerbound2}. 
\end{proof}
\noindent
As a consequence, we end up at a yet different representation of the spherical cap discrepancy, which allows us to get rid of absolute values:
\begin{cor}\label{capdefcor}
One has that
\[
\Delta \left(X\right) = \sup_{w\in\mathbb{S}^{d-1},\,t\in \left[ -1,1\right]}\mu ^{emp}\left( X,w,t\right)
-\mu ^{cap}\left(t\right)\quad \forall X\in\left(\mathbb{S}^{d-1}\right)^N.
\]
\end{cor}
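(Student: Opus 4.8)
The plan is to obtain the identity from a standard two-sided inequality, relying entirely on the three preceding results. Write $S(X):=\sup_{w\in\mathbb{S}^{d-1},\,t\in[-1,1]}\big(\mu^{emp}(X,w,t)-\mu^{cap}(t)\big)$ for the right-hand side. For the inequality $S(X)\le\Delta(X)$ I would simply note that $\mu^{emp}(X,w,t)-\mu^{cap}(t)\le\big|\mu^{emp}(X,w,t)-\mu^{cap}(t)\big|$ holds pointwise for every $(w,t)\in\mathbb{S}^{d-1}\times[-1,1]$; taking the supremum over all such $(w,t)$ and invoking the definition \eqref{capdef} of $\Delta(X)$ yields the claim.

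For the reverse inequality $\Delta(X)\le S(X)$, I would invoke Proposition \ref{maxcap} to fix a pair $(w^*,t^*)\in\mathbb{S}^{d-1}\times[-1,1]$ with $\Delta(X)=\big|\mu^{emp}(X,w^*,t^*)-\mu^{cap}(t^*)\big|$, and then apply Proposition \ref{empgreatercap}, which guarantees $\mu^{emp}(X,w^*,t^*)>\mu^{cap}(t^*)$. Consequently the absolute value can be dropped, giving $\Delta(X)=\mu^{emp}(X,w^*,t^*)-\mu^{cap}(t^*)$, and since $(w^*,t^*)$ is an admissible point in the supremum defining $S(X)$, this last quantity is bounded above by $S(X)$. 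Combining the two inequalities proves $\Delta(X)=S(X)$, as asserted.

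There is essentially no serious obstacle here: the entire substance has been front-loaded into Propositions \ref{maxcap} (the supremum is attained and the extremal cap carries a point on its boundary) and \ref{empgreatercap} (the empirical measure strictly dominates the cap measure at any extremal pair), the latter in turn resting on the strictly positive lower bound from Proposition \ref{lowerbound2}. The only point worth a moment's care is that Proposition \ref{empgreatercap} is stated for \emph{some} extremal pair produced by Proposition \ref{maxcap}, which is exactly what is needed to discard the absolute value for that particular $(w^*,t^*)$; no uniform statement over all near-extremal pairs is required for this corollary.
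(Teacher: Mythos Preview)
Your proposal is correct and follows essentially the same approach as the paper's proof: establish $S(X)\le\Delta(X)$ from the trivial pointwise inequality and the definition \eqref{capdef}, and then use Propositions~\ref{maxcap} and~\ref{empgreatercap} to drop the absolute value at an extremal pair $(w^*,t^*)$ and obtain $\Delta(X)\le S(X)$. The paper's version is slightly terser (it cites only Proposition~\ref{empgreatercap} for the second inequality, since that proposition already presupposes an extremal pair from Proposition~\ref{maxcap}), but the logic is identical.
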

\begin{proof}
Clearly, the relation '$\geq$' in the claimed equality holds true by \eqref{capdef}. On the other hand, by Proposition \ref{empgreatercap}, there exists $(w^*,t^*)\in\mathbb{S}^{d-1}\times [-1,1]$ such that 
$\Delta (X)= \mu^{emp}(X,w^*,t^*) - \mu^{cap}(t^*)$. Hence, the reverse relation '$\leq$' holds also true in the claimed equality.
\end{proof}
\noindent
Throughout the paper, we understand the sphere $\mathbb{S}^{d-1}$ as a metric space inheriting its metric from the Euclidean norm in $\mathbb{R}^{d}$. Next, we are going to prove that the spherical cap discrepancy is continuous.
\begin{theorem}\label{continuity} The function
$\Delta:\left(\mathbb{S}^{d-1}\right)^N \rightarrow \mathbb{R}$ is continuous.
\end{theorem}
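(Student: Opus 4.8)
The plan is to prove that $\Delta$ is simultaneously upper and lower semicontinuous at an arbitrary $X\in(\mathbb{S}^{d-1})^N$. Throughout I would use three facts established above: $\mathbb{S}^{d-1}\times[-1,1]$ is compact, $\mu^{cap}$ is continuous by \eqref{capmeasure}, and (by Proposition \ref{maxcap} together with Corollary \ref{capdefcor}) the supremum in \eqref{capdef} is attained and can be written without the absolute value as $\Delta(X)=\mu^{emp}(X,w^{*},t^{*})-\mu^{cap}(t^{*})$ for some admissible $(w^{*},t^{*})$, the value being strictly positive by Proposition \ref{lowerbound2}.

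For upper semicontinuity, fix a sequence $X^{k}\to X$; passing to a subsequence I may assume $\Delta(X^{k})\to L$. Choose $(w_{k},t_{k})\in\mathbb{S}^{d-1}\times[-1,1]$ with $\Delta(X^{k})=\mu^{emp}(X^{k},w_{k},t_{k})-\mu^{cap}(t_{k})$. Compactness yields a further subsequence along which $(w_{k},t_{k})\to(w^{*},t^{*})\in\mathbb{S}^{d-1}\times[-1,1]$, and, since $\{1,\dots,N\}$ has only finitely many subsets, a last refinement makes the active set $I:=\{i:\langle w_{k},x^{(i)}_{k}\rangle\ge t_{k}\}$ independent of $k$. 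Because the scalar product is continuous and each inequality $\langle w_{k},x^{(i)}_{k}\rangle\ge t_{k}$ is non-strict, it passes to the limit, so $\langle w^{*},x^{(i)}\rangle\ge t^{*}$ for all $i\in I$; hence $\mu^{emp}(X,w^{*},t^{*})\ge N^{-1}\#I=\mu^{emp}(X^{k},w_{k},t_{k})$. Combined with $\mu^{cap}(t_{k})\to\mu^{cap}(t^{*})$ this gives $\Delta(X)\ge\mu^{emp}(X,w^{*},t^{*})-\mu^{cap}(t^{*})\ge L$.

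For lower semicontinuity, fix $X^{k}\to X$ and a pair $(w^{*},t^{*})$ with $\Delta(X)=\mu^{emp}(X,w^{*},t^{*})-\mu^{cap}(t^{*})>0$. Note that $t^{*}>-1$: at $t=-1$ one has $\mathbb{S}^{d-1}\cap H(w^{*},-1)=\mathbb{S}^{d-1}$, so $\mu^{emp}(X,w^{*},-1)=1=\mu^{cap}(-1)$, contradicting positivity. Since $\mu^{emp}(\cdot,w^{*},t^{*})$ can only drop under perturbations of the point set (points may leave the closed half space), I would recover the lost points by lowering the height: put $\varepsilon_{k}:=2\max_{1\le i\le N}\|x^{(i)}_{k}-x^{(i)}\|+k^{-1}$ and $t_{k}:=\max\{-1,\,t^{*}-\varepsilon_{k}\}$, so that $t_{k}\to t^{*}$ and $t_{k}=t^{*}-\varepsilon_{k}$ for $k$ large. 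For every index $i$ with $\langle w^{*},x^{(i)}\rangle\ge t^{*}$ one obtains $\langle w^{*},x^{(i)}_{k}\rangle\ge t^{*}-\|x^{(i)}_{k}-x^{(i)}\|>t^{*}-\varepsilon_{k}=t_{k}$, whence $\mu^{emp}(X^{k},w^{*},t_{k})\ge\mu^{emp}(X,w^{*},t^{*})$ and therefore $\Delta(X^{k})\ge\mu^{emp}(X^{k},w^{*},t_{k})-\mu^{cap}(t_{k})\ge\mu^{emp}(X,w^{*},t^{*})-\mu^{cap}(t_{k})$. Letting $k\to\infty$ and using continuity of $\mu^{cap}$ gives $\liminf_{k}\Delta(X^{k})\ge\Delta(X)$, and together with the previous paragraph this proves continuity.

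I expect the lower semicontinuity to be the only genuine difficulty: because $\mu^{emp}(\cdot,w,t)$ jumps only downward under perturbations of the point set, the supremum defining $\Delta$ is not automatically lower semicontinuous, and one must actively compensate by nudging the cap height $t$ downward. This is exactly where the strict positivity of $\Delta(X)$ (Proposition \ref{lowerbound2}, respectively Proposition \ref{empgreatercap}) is used, since it guarantees that a maximizing $t^{*}$ can be taken with $t^{*}>-1$, so that the downward nudge stays within $[-1,1]$. The upper semicontinuity, in contrast, is a routine compactness argument once the active index set has been frozen along a subsequence.
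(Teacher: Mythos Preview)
Your proof is correct and follows essentially the same approach as the paper's own argument: both halves rest on the same ideas (attainment of the supremum without absolute value via Proposition~\ref{empgreatercap}/Corollary~\ref{capdefcor}, the observation $t^{*}>-1$, and the one-sided semicontinuity of $\mu^{emp}$ compensated by nudging the cap height). The only cosmetic differences are that the paper works in $\varepsilon$--$\delta$ form while you argue along sequences, and that in the upper semicontinuity step the paper tracks the complement of the active set at the limit point whereas you freeze the active set along the perturbed sequence---these are equivalent formulations of the same inequality $\mu^{emp}(X_k,w_k,t_k)\le\mu^{emp}(X,w^{*},t^{*})$.
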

\begin{proof}
We show first that $\Delta$ is lower semicontinuous. Fix arbitrary $X=\left(x^{(1)},\ldots ,x^{(N)}\right)\in\left(\mathbb{S}^{d-1}\right)^N$ and $\varepsilon >0$. According to 
Proposition~\ref{maxcap} and Proposition~\ref{empgreatercap},
there exist $w^*\in\mathbb{S}^{d-1}$ and $t^*\in [-1,1]$ such that 
\[
\Delta \left(X\right)= \mu ^{emp}\left( X,w^*,t^*\right)
-\mu ^{cap}\left(t^*\right)  .
\]

We claim that $t^*>-1$. Indeed, if $t^*=-1$, then 
$\mu ^{emp}\left( X,w^*,t^*\right)
=\mu ^{cap}\left(t^*\right)=1$, 
whence the contradiction $\Delta (X)=0$ with Proposition \ref{empgreatercap}. Define $I:=\left\{ i\in \left\{
1,\ldots ,N\right\} |x^{(i)}\in H(w^*,t^*)\right\}$. Clearly, we find $c>0$ such that
\[
t^*-c\geq -1;\quad
\langle w^*,x^{(i)}\rangle > t^*-c\quad\forall i\in I;\quad
\langle w^*,x^{(i)}\rangle < t^*-c\quad\forall i\in I^c; \quad \mid\mu^{cap}(t^*)-\mu^{cap}(t^*-c)\mid <\varepsilon.
\]
By continuity, there exists $\delta >0$ such that for all $\tilde{X}\in\left(\mathbb{S}^{d-1}\right)^N$ with $\|\tilde{X}-X\|<\delta$ ($\|\cdot \|$ denoting the Euclidean norm) it holds that
\[
\langle w^*,\tilde{x}^{(i)}\rangle > t^*-c\quad\forall i\in I;\quad
\langle w^*,\tilde{x}^{(i)}\rangle < t^*-c\quad\forall i\in I^c.
\]
Consequently, $\mu^{emp}(\tilde{X},w^*,t^*-c)=\mu ^{emp}(X,w^*,t^*)$ for all such $\tilde{X}$. 
Hence, for all $\tilde{X}\in\left(\mathbb{S}^{d-1}\right)^N$ with $\|\tilde{X}-X\|<\delta$,
\[
\Delta (\tilde{X})\geq |\mu^{emp}(\tilde{X},w^*,t^*-c)-\mu^{cap}(t^*-c)|
=|\mu^{emp}(X,w^*,t^*)-\mu^{cap}(t^*)+\mu^{cap}(t^*)-\mu^{cap}(t^*-c)| > \Delta (X)-\varepsilon .
\]
Since $\varepsilon >0$ was
arbitrary, this shows the lower semicontinuity of $\Delta$ at $X$.

As for the upper semicontinuity, assume that $\Delta$ fails to be upper semicontinuous at some $X\in\left(\mathbb{S}^{d-1}\right)^N$. Then there exist some $c>0$
as well as a sequence $X_n\in\left(\mathbb{S}^{d-1}\right)^N$ with $X_n\rightarrow X$ and $\Delta(X_n) > \Delta(X) + c$.
Let $(w_n^*,t_n^*)$ be a sequence that realizes the cap discrepancies $\Delta(X_n)$.
Due to Proposition~\ref{empgreatercap} we have that
\[
\Delta(X_n) = \mu^{emp}(X_n,w_n^*,t_n^*)
- \mu^{cap}(t_n^*) \qquad \forall n\in\mathbb{N}.
\]
Since $(w_n^*,t_n^*)\in\mathbb{S}^{d-1}\times[-1,1]$,
we may assume that $(w_n^*,t_n^*)\to (w^*,t^*)\in\mathbb{S}^{d-1}\times[-1,1]$. Altogether, $(X_n,w_n^*,t_n^*)\to (X,w^*,t^*)$. With the index set $I$ introduced in the first part of this proof, one has that $\langle w^*,x^{(i)}\rangle <t^*$ for all $i\in I^c$. By continuity, there is some $n_0$ such that $\langle w_n^*,x_n^{(i)}\rangle <t_n^*$ for all $n\geq n_0$ and $i\in I^c$. This entails that
$\mu^{emp}(X_n,w_n^*,t_n^*)\leq \mu^{emp}(X,w^*,t^*)$ 
for $n\geq n_0$.  Moreover, by continuity of $\mu^{cap}$, we have $|\mu^{cap}(t^*) - \mu^{cap}(t_n^*)| \leq c$ for sufficient large $n$.
Consequently, there exists some $n_1\in\mathbb{N}$ such that for all $n\geq n_1$
\begin{eqnarray*}
  \Delta(X_n) &=& \mu^{emp}(X_n,w_n^*,t_n^*) - \mu^{cap}(t_n^*)
              \,\leq\, \mu^{emp}(X,w^*,t^*) - \mu^{cap}(t^*) 
                 + \mu^{cap}(t^*) -\mu^{cap}(t_n^*)\\
              &\leq&  |\mu^{emp}(X,w^*,t^*) - \mu^{cap}(t^*)|
                 + |\mu^{cap}(t^*) - \mu^{cap}(t_n^*)| \leq \Delta(X) + c
\end{eqnarray*}
which is a contradiction to the previously established inequality $\Delta(X_n) > \Delta(X) + c$.
\end{proof}
\noindent
A consequence of the continuity property
is the existence of an optimal quantization with respect to the spherical cap discrepancy for any fixed number of points on the unit sphere.
\begin{cor}\label{mincap}
For each $N\geq 1$, there exists a point set
$X_*=\big(x_*^{(1)},\ldots,x_*^{(N)}\big)\in\left(\mathbb{S}^{d-1}\right)^N$ realizing the minimal spherical cap discrepancy, i.e., 
\[
\Delta(X_*) \,= \inf_{X\in\left(\mathbb{S}^{d-1}\right)^N} \Delta(X).
\]
\end{cor}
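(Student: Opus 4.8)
The plan is to invoke the classical Weierstrass extreme value theorem. First I would observe that the unit sphere $\mathbb{S}^{d-1}$ is a closed and bounded subset of $\mathbb{R}^{d}$, hence compact; consequently the $N$-fold product $\left(\mathbb{S}^{d-1}\right)^N$, equipped with the product topology (which here coincides with the topology induced by the Euclidean norm on $\mathbb{R}^{d\times N}$), is compact as a finite product of compact spaces. Second, Theorem~\ref{continuity} guarantees that $\Delta$ is continuous on this compact domain. Since a continuous real-valued function on a nonempty compact set attains its infimum, there exists $X_*\in\left(\mathbb{S}^{d-1}\right)^N$ with $\Delta(X_*)=\inf_{X}\Delta(X)$, which is exactly the assertion.

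Alternatively, and without explicitly naming the Weierstrass theorem, I would argue via a minimizing sequence: pick $X_n\in\left(\mathbb{S}^{d-1}\right)^N$ with $\Delta(X_n)\to \inf_{X}\Delta(X)$; by compactness of $\left(\mathbb{S}^{d-1}\right)^N$ there is a subsequence $X_{n_k}\to X_*$ for some $X_*\in\left(\mathbb{S}^{d-1}\right)^N$, and continuity of $\Delta$ then yields $\Delta(X_*)=\lim_k\Delta(X_{n_k})=\inf_{X}\Delta(X)$.

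There is essentially no obstacle here; the only point worth verifying is that the infimum is finite, so that the minimizing sequence is well defined — this is immediate since $0\le\Delta(X)\le 1$ for all $X$ directly from the definition \eqref{capdef}. One could additionally remark, using Proposition~\ref{lowerbound2}, that the attained minimal value is in fact strictly positive, being bounded below by $\kappa(2N)^{-1}$ with $\kappa=\min\{d,N\}$.
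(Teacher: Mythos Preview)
Your proof is correct and takes essentially the same approach as the paper: the paper states this corollary without proof, simply as ``a consequence of the continuity property'' (i.e., Theorem~\ref{continuity}) combined implicitly with the compactness of $\left(\mathbb{S}^{d-1}\right)^N$, which is exactly the Weierstrass argument you spell out.
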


\section{Generic point sets and a representation formula for the spherical cap discrepancy}

Our ultimate goal in this paper is to prove the local Lipschitz continuity of the spherical cap discrepancy. While it is not clear at this point, whether a general Lipschitz result holds true in general, we will be able to derive it for the class of generic point sets, which would be the typical outcomes of Monte-Carlo sampling on the sphere.
\begin{definition}\label{generic}
A point set $X=\left(x^{(1)},\ldots,x^{(N)}\right)\in\left(\mathbb{R}^{d}\right)^N$
is called {\bf generic} if for any index set $I\subseteq\{1,\ldots,N\}$ with
$\# I\leq d$ the selection $\left\{x^{(i)}\,|\,i\in I\right\}$ is linear independent in $\mathbb{R}^d$.
\end{definition}
\noindent
Clearly, all point sets close enough to some generic point set
are generic themselves, which allows for the following proposition.
\begin{proposition}\label{genrem}
If $\bar{X}\in\left(\mathbb{R}^{d}\right)^N$ is generic, then there exists a neighborhood $\mathcal{O}$ of $\bar{X}$ such that $X$ is generic for each $X\in\mathcal{O}$.
\end{proposition}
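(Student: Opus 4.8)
The plan is to show that genericity is an \emph{open} condition, so that the set of all generic point sets is open in $\left(\mathbb{R}^{d}\right)^N$; the proposition then follows at once by taking $\mathcal{O}$ to be this open set.

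First I would fix an index set $I\subseteq\{1,\ldots,N\}$ with $k:=\#I\leq d$ and invoke the classical link between linear independence and the Gram determinant: writing $X_I\in\mathbb{R}^{d\times k}$ for the submatrix of $X$ formed by the columns indexed by $I$, the selection $\left\{x^{(i)}\,|\,i\in I\right\}$ is linearly independent if and only if $g_I(X):=\det\!\big(X_I^\top X_I\big)>0$. Indeed, one always has $g_I(X)\geq 0$, with equality precisely in the linearly dependent case; equivalently, by the Cauchy--Binet formula, $g_I(X)$ equals the sum of the squares of all $k\times k$ minors of $X_I$, which vanishes exactly when $X_I$ fails to have full column rank. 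Since $g_I$ is a polynomial in the entries of $X$, it is continuous.

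Next I would set $\mathcal{O}_I:=\left\{X\in\left(\mathbb{R}^{d}\right)^N\ \middle|\ g_I(X)>0\right\}$, which is open as the preimage of $(0,\infty)$ under the continuous map $g_I$. By Definition \ref{generic}, a point set $X$ is generic if and only if $X\in\mathcal{O}_I$ for every $I\subseteq\{1,\ldots,N\}$ with $\#I\leq d$, so the set $\mathcal{G}$ of generic point sets equals $\bigcap_{I:\,\#I\leq d}\mathcal{O}_I$. This is a finite intersection (there are only $\sum_{k=0}^{\min\{d,N\}}\binom{N}{k}$ relevant index sets) of open sets, hence open. As $\bar{X}\in\mathcal{G}$ by hypothesis, $\mathcal{O}:=\mathcal{G}$ is the desired neighborhood.

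There is essentially no obstacle here; the only point needing a word of care is the degenerate cases $I=\emptyset$ and $\#I=1$, but these are handled uniformly by the Gram-determinant formulation (the constraint for $I=\emptyset$ is vacuous and may be dropped, while for a singleton $g_{\{i\}}(X)=\|x^{(i)}\|^2>0$ merely encodes $x^{(i)}\neq 0$, which holds at the generic point $\bar{X}$ and hence on a neighborhood).
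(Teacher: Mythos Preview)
Your argument is correct. The paper itself does not give a proof of this proposition at all: it merely precedes the statement with the remark ``Clearly, all point sets close enough to some generic point set are generic themselves,'' and leaves it at that. Your Gram-determinant formulation is precisely the standard way to make this ``clearly'' rigorous, and nothing is missing.
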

\begin{definition}\label{basicdef}
Define the family of index sets
\begin{equation}\label{Phi}
\Phi:=
\big\{I\subseteq\{1,\ldots,N\}\,\big|\, 1\leq \#I \leq d\,\big\}.
\end{equation}
For generic $X=\left(x^{(1)},\ldots,x^{(N)}\right)\in\left(\mathbb{R}^{d}\right)^N$and $I\in\Phi$, let $X_I$ be the matrix whose columns are the $x^{(i)}$ ($i\in I$). Put
\begin{equation}\label{tIwI}
\mathbf{1}:=(1,\ldots,1)^\top\in\mathbb{R}^{\#I},\quad
t_I := \left(\mathbf{1}^\top \left(X_I^\top X_I\right)^{-1}\mathbf{1}\right)^{-1/2},
\quad
w_I := t_I X_I\left(X_I^\top X_I\right)^{-1}\mathbf{1},
\end{equation}
which are well-defined by the assumed genericity of $X$.
\end{definition}
\begin{proposition}\label{normed}
If $X\in\left(\mathbb{R}^{d}\right)^N$ is generic, then $t_I>0$, $w_I\in\mathbb{S}^{d-1}$ and $X_I^Tw_I=t_I{\bf 1}$
for all $I\in\Phi$. If, moreover, $X\in\left(\mathbb{S}^{d-1}\right)^N$, then $0< t_I\leq 1$ for all $I\in\Phi$.
\end{proposition}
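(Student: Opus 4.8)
The plan is to reduce everything to elementary linear algebra once the genericity hypothesis has been used in the right place. Fix $I\in\Phi$, so that $1\le\#I\le d$. By Definition~\ref{generic}, the columns $\{x^{(i)}\,:\,i\in I\}$ of $X_I$ are linearly independent, hence $X_I\in\mathbb{R}^{d\times\#I}$ has full column rank and the Gram matrix $G_I:=X_I^\top X_I$ is symmetric positive definite; in particular $G_I$ is invertible and $G_I^{-1}$ is again symmetric and positive definite. This is the only point where genericity enters, and after it the three asserted identities follow by direct computation.

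First I would note that, since $\mathbf{1}\neq 0$ (as $\#I\ge 1$) and $G_I^{-1}$ is positive definite, the scalar $\mathbf{1}^\top G_I^{-1}\mathbf{1}$ is strictly positive; therefore $t_I=(\mathbf{1}^\top G_I^{-1}\mathbf{1})^{-1/2}$ is well defined with $t_I>0$, and consequently $w_I$ is well defined. Next, substituting the definitions gives
\[
X_I^\top w_I = t_I\,\big(X_I^\top X_I\big)G_I^{-1}\mathbf{1}=t_I\,G_IG_I^{-1}\mathbf{1}=t_I\mathbf{1},
\]
which is the claimed identity $X_I^\top w_I=t_I\mathbf{1}$. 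Finally, using the symmetry of $G_I^{-1}$,
\[
\|w_I\|^2=w_I^\top w_I=t_I^2\,\mathbf{1}^\top G_I^{-1}\big(X_I^\top X_I\big)G_I^{-1}\mathbf{1}=t_I^2\,\mathbf{1}^\top G_I^{-1}\mathbf{1}=t_I^2\cdot t_I^{-2}=1,
\]
so that $w_I\in\mathbb{S}^{d-1}$.

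For the last assertion, assume in addition $X\in(\mathbb{S}^{d-1})^N$. Pick any $i\in I$ (possible since $I\neq\emptyset$); then the $i$-th component of the identity $X_I^\top w_I=t_I\mathbf{1}$ reads $\langle x^{(i)},w_I\rangle=t_I$. Since $\|x^{(i)}\|=1$ and $\|w_I\|=1$, the Cauchy--Schwarz inequality yields $t_I=\langle x^{(i)},w_I\rangle\le\|x^{(i)}\|\,\|w_I\|=1$, which together with $t_I>0$ established above gives $0<t_I\le 1$.

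I do not expect any genuine obstacle here: the entire content is the observation that genericity makes $G_I$ invertible with positive definite inverse, after which all four claims are immediate matrix manipulations — the slightly less transparent one being the norm computation, which hinges on $(G_I^{-1})^\top=G_I^{-1}$ and the telescoping $G_I^{-1}(X_I^\top X_I)G_I^{-1}=G_I^{-1}$.
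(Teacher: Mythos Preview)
Your proof is correct and follows essentially the same approach as the paper's: the paper's proof is extremely terse (declaring the first three facts ``evident from \eqref{tIwI}'' and deducing $t_I\le 1$ via $t_I=\langle x^{(i)},w_I\rangle\le\|w_I\|=1$), while you simply spell out the linear-algebra details that the paper leaves implicit. No substantive difference.
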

\begin{proof}
The first assertion is evident from \eqref{tIwI}. If $X\in\left(\mathbb{S}^{d-1}\right)^N$, then the first assertion implies the second one:
\[
t_I=\langle x^{(1)},w_I\rangle\leq\|w_I\|= 1\quad\forall I\in\Phi.
\]
\end{proof}
\noindent
Next, we shall prove a representation formula for the spherical cap discrepancy of generic point sets which follows from and simplifies the enumerative formula for general point sets proven in \cite[Theorem 1]{heitsch-henrion}.
\begin{theorem}\label{genform}
Let $X=\left(x^{(1)},\ldots,x^{(N)}\right)\in\left(\mathbb{S}^{d-1}\right)^N$ be generic. Then, with the notation from Definition \ref{basicdef}, the spherical cap discrepancy may be represented as
\begin{equation}\label{enumeration}
\Delta(X) = \max_{I\in\Phi}\max\big\{\mu^{emp}(X,w_I,t_I)-\mu^{cap}(t_I),
 \mu^{emp}(X,-w_I,-t_I) - \mu^{cap}(-t_I) \big\}.
\end{equation}
\end{theorem}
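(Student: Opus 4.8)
The plan is to establish the two inequalities in \eqref{enumeration} separately, the inclusion ``$\ge$'' being essentially immediate and ``$\le$'' carrying the weight. For ``$\ge$'': for each $I\in\Phi$, Proposition~\ref{normed} gives $w_I\in\mathbb{S}^{d-1}$ and $t_I\in(0,1]$, so that $(w_I,t_I)$ and $(-w_I,-t_I)$ both belong to $\mathbb{S}^{d-1}\times[-1,1]$; substituting them into the supremum representation of Corollary~\ref{capdefcor} bounds each of the two expressions inside the inner maximum of \eqref{enumeration} by $\Delta(X)$, hence also their maximum over $I\in\Phi$.

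For ``$\le$'', I would first fix, using Propositions~\ref{maxcap} and \ref{empgreatercap}, a pair $(w^*,t^*)\in\mathbb{S}^{d-1}\times[-1,1]$ with $\Delta(X)=\mu^{emp}(X,w^*,t^*)-\mu^{cap}(t^*)>0$, and set $S:=\{i:\langle w^*,x^{(i)}\rangle\ge t^*\}$ and $I^*:=\{i:\langle w^*,x^{(i)}\rangle=t^*\}\ne\emptyset$ (nonemptiness by Proposition~\ref{maxcap}); note $I^*\subseteq S$. The key preliminary reduction is that $(w^*,t^*)$ must be ``tight'': since $\mu^{cap}$ is continuous and strictly decreasing on $[-1,1]$ (immediate from \eqref{capmeasure}) while $\mu^{emp}(X,w,t')\ge\#S/N$ as soon as $t'\le\min_{i\in S}\langle w,x^{(i)}\rangle$, the existence of any $w$ with $\min_{i\in S}\langle w,x^{(i)}\rangle>t^*$ would let one raise $t^*$ slightly and strictly increase $\mu^{emp}(X,\cdot,\cdot)-\mu^{cap}(\cdot)$, contradicting optimality. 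Thus $t^*=\max_{\|w\|=1}\min_{i\in S}\langle w,x^{(i)}\rangle$ and $w^*$ is a maximizer of this concave, sphere-constrained max--min problem, whose active index set at $w^*$ is exactly $I^*$.

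Next I would exploit the first-order optimality of $w^*$. The map $w\mapsto\min_{i\in S}\langle w,x^{(i)}\rangle$ is concave with directional derivative $d\mapsto\min_{i\in I^*}\langle x^{(i)},d\rangle$ at $w^*$, so maximality over the unit sphere gives $\min_{i\in I^*}\langle x^{(i)},d\rangle\le 0$ for every $d\perp w^*$; a Gordan-type alternative in the subspace $(w^*)^\perp$, followed by projecting back, produces multipliers $\alpha_i\ge 0$ $(i\in I^*)$ with $\sum_{i\in I^*}\alpha_i=1$ and
\[
\sum_{i\in I^*}\alpha_i x^{(i)}=t^*\,w^*.
\]
A short case distinction on the sign of $t^*$ then finishes. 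If $t^*=0$, the identity forces $0\in\operatorname{conv}\{x^{(i)}:i\in I^*\}$; but these points lie in the $(d-1)$-dimensional subspace $(w^*)^\perp$, so genericity bounds $\#I^*\le d-1$ and hence makes $\{x^{(i)}:i\in I^*\}$ linearly independent, which is incompatible with $0$ lying in their convex hull — so this case cannot occur. If $t^*>0$, then $w^*=(t^*)^{-1}\sum_{i\in I^*}\alpha_i x^{(i)}$ lies in $\operatorname{cone}\{x^{(i)}:i\in I^*\}$; Carath\'eodory extracts $I'\subseteq I^*$ with $\#I'\le d$, so $I'\in\Phi$, with $w^*\in\operatorname{col}(X_{I'})$ and $X_{I'}^\top w^*=t^*\mathbf{1}$. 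By genericity $X_{I'}^\top X_{I'}$ is invertible, and solving this linear system together with $\|w^*\|=1$, $t^*>0$ reproduces exactly the closed forms \eqref{tIwI}, i.e.\ $t^*=t_{I'}$, $w^*=w_{I'}$; hence $\Delta(X)$ equals the term $\mu^{emp}(X,w_{I'},t_{I'})-\mu^{cap}(t_{I'})$ on the right of \eqref{enumeration}. If $t^*<0$, the same argument applied to $-w^*=|t^*|^{-1}\sum_{i\in I^*}\alpha_i x^{(i)}$ gives $I'\in\Phi$ with $(-w^*,-t^*)=(w_{I'},t_{I'})$, so $\Delta(X)=\mu^{emp}(X,-w_{I'},-t_{I'})-\mu^{cap}(-t_{I'})$, again a term on the right of \eqref{enumeration}. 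In all cases $\Delta(X)$ is dominated by the right-hand side, completing ``$\le$''.

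I expect the main obstacle to be the passage from optimality of $(w^*,t^*)$ to the algebraic identity $\sum_{i\in I^*}\alpha_i x^{(i)}=t^*w^*$: one cannot differentiate the discontinuous map $\mu^{emp}(X,\cdot,\cdot)$ directly, so routing the argument through the auxiliary max--min problem is essential, and it is precisely here — and in excluding the degenerate case $t^*=0$ — that genericity enters nontrivially (every $\le d$ of the points are linearly independent, so in particular no $d$ of them lie in a common hyperplane through the origin, which also guarantees that $w_I,t_I$ are well defined and uniquely characterised by the equations above). As the authors indicate, an alternative would be to start from the enumerative formula of \cite[Theorem~1]{heitsch-henrion} for arbitrary point sets and observe that, under genericity, the index family there collapses to $\Phi$ and the hyperplane data is given by the explicit expressions \eqref{tIwI}.
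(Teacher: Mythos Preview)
Your argument is correct and follows a genuinely different route from the paper's proof. The paper invokes the general enumeration formula of \cite[Theorem~1]{heitsch-henrion} as a black box, obtaining an optimal index set $I^*$ together with $(w^*,t^*)$ expressed via the extended matrix $\tilde X_{I^*}=\binom{X_{I^*}}{-\mathbf{1}^\top}$, and then performs a purely algebraic reduction (essentially a Schur-complement computation) to show that, under genericity, $t^*>0$ and $(w^*,t^*)=(w_{I^*},t_{I^*})$ in the sense of \eqref{tIwI}. No optimization or convex-analysis argument is used beyond what is already encapsulated in the cited theorem.

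Your proof, by contrast, is self-contained: starting from Propositions~\ref{maxcap} and~\ref{empgreatercap} you recast the optimality of $(w^*,t^*)$ as a concave max--min problem over the sphere, extract the KKT-type identity $\sum_{i\in I^*}\alpha_i x^{(i)}=t^*w^*$ via a Gordan alternative in $(w^*)^\perp$, exclude $t^*=0$ by a neat dimension count (genericity forbids $d$ points in a hyperplane through the origin), and then use Carath\'eodory together with $w^*\in\mathrm{col}(X_{I'})$, $X_{I'}^\top w^*=t^*\mathbf{1}$, $\|w^*\|=1$ to identify $(w^*,t^*)$ (or its negative) with some $(w_{I'},t_{I'})$. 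The trade-off is clear: the paper's route is shorter \emph{given} the earlier enumeration theorem but inherits its machinery and is algebraically opaque; your route avoids that dependency entirely, gives a transparent geometric explanation of why the optimal cap is pinned down by at most $d$ of the points, and makes the role of genericity visible at exactly the two places it is needed (invertibility of $X_{I'}^\top X_{I'}$ and the exclusion of $t^*=0$). Either approach yields the statement; yours would also survive essentially unchanged if one wanted to bypass \cite{heitsch-henrion} altogether.
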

\begin{proof}
For some $I\in\Phi$, denote by $\tilde{X}_I$ the extension $\tilde{X}_I=\left(X_I\atop-\mathbf{1}^T\right)$ of the matrix $X_I$.
From the enumeration formula in \cite[Theorem 1]{heitsch-henrion}
we know that the cap discrepancy is represented as a maximum of local discrepancies
associated with index subsets contained in $\Phi$.
Let $I^*\in\Phi$ some index set realizing
this maximum. Then, according to \cite[Theorem 1]{heitsch-henrion}, we have that ${\rm rank}\,\tilde{X}_{I^*}=\#I^*$,
$\gamma:=\mathbf{1}^\top\left(\tilde{X}_{I^*}^\top \tilde{X}_{I^*}\right)^{-1}\mathbf{1}\in (0,1]$ and
\begin{equation}\label{local-delta}
\Delta(X) = \max\big\{|\mu^{emp}(X,w^*,t^*)-\mu^{cap}(t^*)|,
|\mu^{emp}(X,-w^*,-t^*)-\mu^{cap}(-t^*)|\big\},
\end{equation}
where $t^*:=\left(\frac{1-\gamma}{\gamma}\right)^{1/2}\geq 0$ and
\begin{equation}\label{oldtheorem}
w^*:=\frac{1+(t^*)^2}{t^*} X_{I^*}\left(\tilde{X}_{I^*}^\top\tilde{X}_{I^*}\right)^{-1}\mathbf{1}\,\mbox{ if } t^*>0,\mbox{ and }\,
w^*\in\mathrm{Ker}X_{I^*}^\top\cap\mathbb{S}^{d-1}\,\mbox{ if }t^*=0.
\end{equation}
As noted in \cite[Theorem 1]{heitsch-henrion}, the choice of $w^*$ in the second case of \eqref{oldtheorem} is arbitrary.


Then, by virtue of Proposition \ref{empgreatercap}, regardless of whether the first or the second term in \eqref{local-delta} is dominating,
\begin{eqnarray}\label{removeabs}
\Delta(X)&=&\max\{\mu^{emp}(X,w^*,t^*)-\mu^{cap}(t^*),\mu^{emp}(X,-w^*,-t^*)-\mu^{cap}(-t^*)\}.
\end{eqnarray}

To proceed, put
\[
v := -\big(1+(t^*)^2\big) \left(\tilde{X}_{I^*}^\top\tilde{X}_{I^*}\right)^{-1} \mathbf{1}.
\]
From here, we get the two relations
\[
\mathbf{1}^\top v = -\big(1+(t^*)^2\big)\gamma = - \left(1+\frac{1-\gamma}{\gamma}\right)\gamma = -1;\quad \tilde{X}_{I^*}^\top\tilde{X}_{I^*} v = - \big(1+(t^*)^2\big)\mathbf{1}.
\]
Along with the definition of $\tilde{X}_{I^*}$ as an extended matrix, this yields that
\[
- \big(1+(t^*)^2\big)\mathbf{1}=
\tilde{X}_{I^*}^\top\tilde{X}_{I^*}v= (X_{I^*}^\top X_{I^*} + \mathbf{1}\,\mathbf{1}^\top)v= X_{I^*}^\top X_{I^*}v - \mathbf{1}.
\]
Therefore, it holds $- (t^*)^2\mathbf{1} = X_{I^*}^\top X_{I^*}v$.
Since $X_{I^*}^\top X_{I^*}$ is regular by genericity of $X$, one gets that
\begin{equation}\label{vrep}
v=-(t^*)^2\left(X_{I^*}^\top X_{I^*}\right)^{-1}\mathbf{1}\quad\mbox{and}\quad 1=-{\bf 1}^Tv = (t^*)^2\mathbf{1}^T\left(X_{I^*}^\top X_{I^*}\right)^{-1}\mathbf{1}.
\end{equation}
In particular, it must be $t^* > 0$ and we observe that
\[
t^* = \left(\mathbf{1}^\top \left(X_{I^*}^\top X_{I^*}\right)^{-1}\mathbf{1}\right)^{-1/2}.
\]
Furthermore, thanks to $t^*>0$, on the other hand, by \eqref{oldtheorem} and \eqref{vrep}
one arrives at
\[
w^*=\frac{1+(t^*)^2}{t^*} X_{I^*}\left(\tilde{X}_{I^*}^\top\tilde{X}_{I^*}\right)^{-1}\mathbf{1}=-\frac{1}{t^*} X_{I^*}v=t^*X_{I^*}\left(X_{I^*}^\top X_{I^*}\right)^{-1}\mathbf{1}.
\]
Altogether, we conclude that 
$(w^*,t^*) = (w_{I^*},t_{I^*})$
for $w_{I^*},t_{I^*}$ defined in \eqref{tIwI}. Combining this with \eqref{removeabs}, we get that
\[
\Delta (X)=\max\big\{\mu^{emp}(X,w_{I^*},t_{I^*})-\mu^{cap}(t_{I^*}),
\mu^{emp}(X,-w_{I^*},-t_{I^*})-\mu^{cap}(-t_{I^*})\big\}.
\]
Moreover, because $I^*\in\Phi$, it even holds that
\[
\Delta(X)\leq\max_{I\in\Phi}\max\big\{\mu^{emp}(X,w_I,t_I)-\mu^{cap}(t_I),
\mu^{emp}(X,-w_I,-t_I)-\mu^{cap}(-t_I)\big\}\leq\Delta (X),
\]
where the last inequality relies on \eqref{capdef} and on the fact that $w_I\in\mathbb{S}^{d-1}$ and $t_I\in \left[ -1,1\right]$ for all $I\in\Phi$ by Proposition \ref{normed}. This proves \eqref{enumeration}.
\end{proof}
\noindent
We may slightly improve the representation formula \eqref{enumeration} by excluding singletons from the index family $\Phi$ in Theorem~\ref{genform}. 
\begin{proposition}
Let be $N\geq 2$. Then the assertion of Theorem~\ref{genform}
remains valid if replacing the family of index sets $\Phi$ in \eqref{Phi} by
the (smaller) family of index sets
\begin{equation}\label{barPhi}
\bar\Phi := \big\{I\subseteq\{1,\ldots,N\}\,\big|\,2\leq\# I\leq d\big\}.
\end{equation}
\end{proposition}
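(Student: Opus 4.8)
The plan is to exploit the disjoint decomposition $\Phi=\bar\Phi\ \cup\ \{\{1\},\ldots,\{N\}\}$ of $\Phi$ into the index sets of cardinality at least two and the $N$ singletons. Abbreviating by $f(I):=\max\{\mu^{emp}(X,w_I,t_I)-\mu^{cap}(t_I),\ \mu^{emp}(X,-w_I,-t_I)-\mu^{cap}(-t_I)\}$ the inner maximum appearing in \eqref{enumeration}, this decomposition gives $\max_{I\in\Phi}f(I)=\max\big\{\max_{I\in\bar\Phi}f(I),\ \max_{1\le i\le N}f(\{i\})\big\}$. By Theorem~\ref{genform} the left-hand side equals $\Delta(X)$, so it suffices to show $\max_{I\in\bar\Phi}f(I)\ge\max_{1\le i\le N}f(\{i\})$; then the outer maximum is already attained on $\bar\Phi$, and the claimed variant of \eqref{enumeration} follows.

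First I would evaluate $f(\{i\})$ for a singleton. Since $\|x^{(i)}\|=1$, Definition~\ref{basicdef} collapses to $X_{\{i\}}^\top X_{\{i\}}=(1)$, hence $t_{\{i\}}=1$ and $w_{\{i\}}=x^{(i)}$. Genericity (through $d\ge2$) forces $x^{(j)}\neq x^{(i)}$ for $j\neq i$, so $\mu^{emp}(X,x^{(i)},1)=N^{-1}$, while $\mu^{cap}(1)=0$ by \eqref{capmeasure}; on the antipodal side $\mu^{emp}(X,-x^{(i)},-1)=1$ and $\mu^{cap}(-1)=1$ by \eqref{capcomp}. Hence $f(\{i\})=\max\{N^{-1},0\}=N^{-1}$ for every $i$, i.e.\ $\max_{1\le i\le N}f(\{i\})=N^{-1}$.

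Next I would exhibit a member of $\bar\Phi$ with $f$-value at least $N^{-1}$. Put $\kappa:=\min\{d,N\}$; since $d\ge2$ and, by hypothesis, $N\ge2$, we have $2\le\kappa\le d$ and $\kappa\le N$, so $I_0:=\{1,\ldots,\kappa\}\in\bar\Phi$. By Proposition~\ref{normed}, $\langle w_{I_0},x^{(i)}\rangle=t_{I_0}$ for all $i\in I_0$, so \eqref{empcomp} and \eqref{capcomp} yield
\[
\big(\mu^{emp}(X,w_{I_0},t_{I_0})-\mu^{cap}(t_{I_0})\big)+\big(\mu^{emp}(X,-w_{I_0},-t_{I_0})-\mu^{cap}(-t_{I_0})\big)
=N^{-1}\#\{\,i\mid\langle w_{I_0},x^{(i)}\rangle=t_{I_0}\,\}\ge\kappa N^{-1},
\]
whence $f(I_0)\ge\kappa(2N)^{-1}\ge N^{-1}$. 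Combining the last two paragraphs gives $\max_{I\in\bar\Phi}f(I)\ge f(I_0)\ge N^{-1}=\max_{1\le i\le N}f(\{i\})$, which is exactly what was needed.

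I do not expect a genuine obstacle here; the only points requiring care are the identification of $t_{\{i\}}$ and $w_{\{i\}}$ from Definition~\ref{basicdef}, the use of genericity (via $d\ge2$) to guarantee that distinct columns are distinct sphere points, and the remark that the hypothesis $N\ge2$ is precisely what keeps $\bar\Phi$ nonempty and $\kappa\ge2$, so that the argument does not degenerate. (Incidentally, the estimate $f(I_0)\ge\kappa(2N)^{-1}$ also re-derives the lower bound of Proposition~\ref{lowerbound2}.)
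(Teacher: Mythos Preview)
Your proof is correct and follows essentially the same route as the paper: both compute that every singleton contributes the value $N^{-1}$, and both exhibit an index set in $\bar\Phi$ (the paper takes $\{1,2\}$, you take $\{1,\ldots,\kappa\}$) whose inner maximum is at least $N^{-1}$ via the identity \eqref{empcomp}--\eqref{capcomp}. The only cosmetic differences are that the paper argues by contradiction while you argue directly, and your choice of $I_0$ yields the slightly stronger bound $\kappa/(2N)$, which, as you note, also recovers Proposition~\ref{lowerbound2}.
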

\begin{proof}
Since $\bar\Phi \subseteq\Phi$, it is sufficient to show that
there always exists some $\bar{I}^*\in \bar\Phi$ realizing the cap discrepancy $\Delta(X)$
in \eqref{enumeration}.
Assuming to the contrary that
\begin{equation}\label{strictdisc}
\Delta(X) > \max_{I\in\bar\Phi}\max\big\{\mu^{emp}(X,w_{I},t_{I})-\mu^{cap}(t_{I}),
\mu^{emp}(X,-w_{I},-t_{I})-\mu^{cap}(-t_{I})\big\},
\end{equation}
$\Delta(X)$ must be realized by some
$I^*\in\Phi\setminus\bar\Phi$. This implies that $I^*$
is a singleton, i.e., $I^*=\{\ell\}$ for some $\ell\in\{1,\ldots,N\}$.
Then, by \eqref{tIwI} we have $t_{I^*}=1$ and $w_{I^*}=x^{(\ell)}$, 
which by $\|x^{(i)}\|=1$  for $i=1,\ldots ,N$ implies that 
\[
x^{(i)}\in H(w_I^*,t_I^*)\Longleftrightarrow \langle x^{(i)},x^{(\ell)}\rangle =1\Longleftrightarrow x^{(i)}=x^{(\ell)}\quad (i=1,\ldots ,N).
\]
On the other hand, by genericity of $X$, we know that $x^{(i)}\neq x^{(\ell)}$ for $i\neq\ell$. Consequently, 
$\mu^{emp}(X,w_{I^*},t_{I^*})=N^{-1}$ and $\mu^{emp}(X,-w_{I^*},-t_{I^*})=1$ due to \eqref{empcomp}. Moreover, $\mu^{cap}(t_{I ^*})=\mu^{cap}(1)=0$ and $\mu^{cap}(-t_{I^*})=\mu^{cap}(-1)=1$.
Thus, 
\begin{equation}\label{contra}
\Delta(X) = \max\big\{\mu^{emp}(X,w_{I^*},t_{I^*})-\mu^{cap}(t_{I^*}),
\mu^{emp}(X,-w_{I^*},-t_{I^*})-\mu^{cap}(-t_{I^*})\big\}= N^{-1}.
\end{equation}
Consider $\bar I:=\{1,2\}\in\bar\Phi$ and define $t_{\bar I}$, $w_{\bar I}$ as in \eqref{tIwI}.
For 
\[
\bar\Delta := \max\big\{\mu^{emp}(X,w_{\bar I},t_{\bar I})-\mu^{cap}(t_{\bar I}),
\mu^{emp}(X,-w_{\bar I},-t_{\bar I})-\mu^{cap}(-t_{\bar I})\big\}
\]
it holds that (similarly to the proof of Proposition~\ref{lowerbound2})
\begin{eqnarray*}
2 \bar\Delta &\geq& \mu^{emp}(X,w_{\bar I},t_{\bar I})-\mu^{cap}(t_{\bar I}) + \mu^{emp}(X,-w_{\bar I},-t_{\bar I})-\mu^{cap}(-t_{\bar I})\\& = & 1 + N^{-1}\cdot\#\{i\in\{1,\ldots,N\}\,| \langle w_{\bar I}, x^{(i)}\rangle = t_{\bar I}\,\} -1 \geq 2 N^{-1}.
\end{eqnarray*}
From \eqref{tIwI}, it follows that $X^T_{\bar{I}}w_{\bar{I}}=t_{\bar{I}}\mathbf{1}$, and so, $\langle w_{\bar I}, x^{(i)}\rangle = t_{\bar I}$ for $i=1,2$. Therefore, $2\bar\Delta\geq 2N^{-1}$. On the other hand, $\Delta (X)>\bar\Delta$ by \eqref{strictdisc}. This yields the contradiction $\Delta (X)>N^{-1}$ with \eqref{contra}.
\end{proof}
At the end of this section, we prove a lemma connected with Theorem \ref{genform} and the quantities defined in \eqref{tIwI} which will be of later use.
\begin{lemma}\label{indexexpansion}
Let $X\in\left(\mathbb{R}^{d}\right)^N$ be generic and
$I\in\Phi$ with $\#I < d$. If there exists some index $j\in\{1,\ldots,N\}\setminus I$
such that $\langle w_I, x^{(j)}\rangle = t_I$, then for $J:=I\cup\{j\}$ it holds that $t_J = t_I$ and $w_J = w_I$.
\end{lemma}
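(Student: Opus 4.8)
The plan is to exploit the defining linear-algebra identities in \eqref{tIwI} together with the hypothesis $\langle w_I,x^{(j)}\rangle=t_I$. First I would recall from Proposition~\ref{normed} that $w_I\in\mathbb{S}^{d-1}$ and $X_I^\top w_I=t_I\mathbf{1}$. Adjoining the column $x^{(j)}$ to $X_I$ gives $X_J=(X_I\mid x^{(j)})$, and the hypothesis says precisely that $X_J^\top w_I=t_I\mathbf{1}$, where now $\mathbf{1}\in\mathbb{R}^{\#I+1}$. So $w_I$ already satisfies, with respect to the larger index set $J$, the system of equations that characterises $w_J$ up to the normalisation.

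The key step is then to show that this characterisation is \emph{unique}, i.e.\ that $w_J$ is the only vector (up to nothing, once the sign/normalisation is fixed) in $\mathrm{span}\{x^{(i)}\mid i\in J\}$ whose inner products with all the $x^{(i)}$, $i\in J$, are equal. Concretely, from \eqref{tIwI} one has $w_J=t_J X_J(X_J^\top X_J)^{-1}\mathbf{1}$, and I would verify directly that $w_I$ has exactly this form. Since $\#J\le d$ and $X$ is generic, $X_J^\top X_J$ is invertible. Write $w_I=X_I c$ for the (unique) coordinate vector $c$ expressing $w_I$ in the columns of $X_I$; this exists because, by construction in \eqref{tIwI}, $w_I=t_I X_I(X_I^\top X_I)^{-1}\mathbf{1}\in\mathrm{range}\,X_I\subseteq\mathrm{range}\,X_J$. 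Hence $w_I=X_J\tilde c$ with $\tilde c=(c^\top,0)^\top$. Then $X_J^\top X_J\tilde c=X_J^\top w_I=t_I\mathbf{1}$, so $\tilde c=t_I(X_J^\top X_J)^{-1}\mathbf{1}$, and therefore $w_I=X_J\tilde c=t_I X_J(X_J^\top X_J)^{-1}\mathbf{1}$. Comparing with the formula $w_J=t_J X_J(X_J^\top X_J)^{-1}\mathbf{1}$, we see $w_I$ and $w_J$ are positive scalar multiples of the same nonzero vector $u:=X_J(X_J^\top X_J)^{-1}\mathbf{1}$; since both lie on $\mathbb{S}^{d-1}$ (Proposition~\ref{normed}) and both scalars $t_I,t_J$ are positive, we must have $t_I=t_J=\|u\|^{-1}$ and $w_I=w_J$.

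Alternatively, and perhaps more cleanly, once $w_I=t_I X_J(X_J^\top X_J)^{-1}\mathbf{1}$ is established, I would take the inner product of both sides with $w_I$ itself: $1=\|w_I\|^2=t_I\mathbf{1}^\top(X_J^\top X_J)^{-1}X_J^\top w_I=t_I^2\,\mathbf{1}^\top(X_J^\top X_J)^{-1}\mathbf{1}$, which gives $t_I=(\mathbf{1}^\top(X_J^\top X_J)^{-1}\mathbf{1})^{-1/2}=t_J$ by \eqref{tIwI}; then $w_J=t_J X_J(X_J^\top X_J)^{-1}\mathbf{1}=t_I X_J(X_J^\top X_J)^{-1}\mathbf{1}=w_I$, finishing the proof. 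The main obstacle, though it is a mild one, is the bookkeeping needed to justify that $w_I$ lies in $\mathrm{range}\,X_J$ and to pass correctly between the two sizes of the all-ones vector when forming $X_J^\top X_J\tilde c$; once that is handled, everything reduces to the invertibility of $X_J^\top X_J$ guaranteed by genericity and the fact that $\#J\le d$.
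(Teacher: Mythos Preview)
Your argument is correct and takes a genuinely different route from the paper. The paper proves $t_J=t_I$ by expanding $\mathbf{1}^\top(X_J^\top X_J)^{-1}\mathbf{1}$ via the Schur complement of the block matrix $X_J^\top X_J=\begin{pmatrix}X_I^\top X_I & X_I^\top x^{(j)}\\ (x^{(j)})^\top X_I & \|x^{(j)}\|^2\end{pmatrix}$ and then obtains $w_J=w_I$ by computing $\langle w_J,w_I\rangle=1$. You instead observe that $w_I\in\mathrm{range}\,X_I\subseteq\mathrm{range}\,X_J$ together with $X_J^\top w_I=t_I\mathbf{1}$ forces $w_I=t_I\,X_J(X_J^\top X_J)^{-1}\mathbf{1}$, because $v\mapsto X_J^\top v$ is injective on $\mathrm{range}\,X_J$; then normalisation gives $t_I=t_J$ and $w_I=w_J$ at once. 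Your approach is more conceptual and shorter, avoiding the block-inverse computation entirely; the paper's approach is more computational but has the minor advantage of being self-contained without appealing to the range/injectivity observation. Both rely on exactly the same ingredients: genericity (so $X_J^\top X_J$ is invertible), Proposition~\ref{normed}, and the hypothesis $\langle w_I,x^{(j)}\rangle=t_I$.
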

\begin{proof}
By assumption and by definition of $w_I$ we obtain for $y:=x^{(j)}$ that
\[
    t_I = \langle w_I, y\rangle = t_I \mathbf{1}^\top (X_I^\top X_I)^{-1} X_I^{\top} y.
\]
Hence, with $t_I>0$ (see Proposition~\ref{normed}), we observe that
\begin{equation}\label{indexexpansion1}
    \mathbf{1}^\top (X_I^\top X_I)^{-1} X_I^{\top} y = y^\top X_I (X_I^\top X_I)^{-1} \mathbf{1} = 1.
\end{equation}
We first show that $t_J = t_I$: The genericity of $X$ ensures that $X_J^\top X_J$ is regular and that
\[
   \frac{1}{t_J^2} = \mathbf{1}^\top (X_J^{\top}X_J)^{-1} \mathbf{1}
        = (\mathbf{1}^\top\, |\,1)
         \left(\begin{array}{cc} X_I^\top X_I &  X_I^\top y\\
                                  y^\top X_I  &  \|y\|^2 \end{array}\right)^{-1}
                   \left(\begin{array}{c} \mathbf{1} \\ 1\end{array}\right).
\]
Using the Schur complement
$S := \|y\|^2 - y^\top X_I(X_I^\top X_I)^{-1} X_I^\top y \neq 0$ of $X_I^TX_I$, a well-known formula for the inverse of partitioned matrices, yields together with \eqref{indexexpansion1} and the definition of $w_I$ in \eqref{tIwI} that
\begin{eqnarray*}
  \frac{1}{t_J^2} &=&(\mathbf{1}^\top\,|\,1)
         \left(\begin{array}{cc}
(X_I^\top X_I)^{-1} + \frac{1}{S}(X_I^\top X_I)^{-1} X_I^\top y y^\top X_I (X_I^\top X_I)^{-1}&
         - \frac{1}{S} (X_I^\top X_I)^{-1} X_I^\top y\\ \\
                                -\frac{1}{S} y^\top X_I  (X_I^\top X_I)^{-1}&  \frac{1}{S} \end{array}\right)
                   \left(\begin{array}{c} \mathbf{1} \\ 1\end{array}\right)\\ \\
   &=& \mathbf{1}^\top(X_I^\top X_I)^{-1}\mathbf{1} 
       + \frac{1}{S}\mathbf{1}^\top (X_I^\top X_I)^{-1} X_I^\top y y^\top X_I (X_I^\top X_I)^{-1}\mathbf{1}-\frac{2}{S}\mathbf{1}^\top (X_I^\top X_I)^{-1} X_I^\top y+\frac{1}{S}=\frac{1}{t_I^2}.
\end{eqnarray*}
Thus, $t_J = t_I$. Now we show that also $w_J = w_I$:  To this end, referring to \eqref{tIwI} and taking into account \eqref{indexexpansion1}, we compute
\begin{eqnarray*}
   \langle w_J, w_I \rangle &=& t_J t_I \left(\mathbf{1}^\top\,|\,1\right)
      \left(\begin{array}{cc} X_I^\top X_I &  X_I^\top y\\
                                  y^\top X_I  &  \|y\|^2 \end{array}\right)^{-1}
      \left(\begin{array}{c} X_I^\top \\ y^\top\end{array}\right)
      X_I (X_I^\top X_I)^{-1} \mathbf{1}\\ \\
      & = & t_J t_I \left(\mathbf{1}^\top\,|\,1\right)
         \left(\begin{array}{cc} X_I^\top X_I &  X_I^\top y\\
                                  y^\top X_I  &  \|y\|^2 \end{array}\right)^{-1}
                   \left(\begin{array}{c} \mathbf{1} \\ 1\end{array}\right)\, = \, \frac{t_I t_J}{t_J^2} \,=\, 1.
\end{eqnarray*}
Since $w_I,w_J\in\mathbb{S}^{d-1}$ by Proposition \ref{normed},
we conclude that $w_J = w_I$.
\end{proof}

\section{Local Lipschitz continuity of the spherical cap discrepancy at generic point sets}

In this section, we are going to prove the main result of this paper, namely the local Lipschitz continuity of the spherical cap discrepancy $\Delta$ around generic point sets. The main argument would
aim at representing $\Delta$ as a continuous selection of $\mathcal{C}^1$-functions. The Lipschitz
continuity would allow one to calculate the Clarke subdifferential of $\Delta$ and to exploit it in the derivation of necessary optimality conditions for minimizing $\Delta$ as a function of the point set (optimal quantization). A technical difficulty arising in this context is the fact that both, the argument of deriving Lipschitz continuity for continuous selections of $\mathcal{C}^1$-functions and the definition of Clarke's subdifferential are tied to a structure of normed linear spaces, whereas $\Delta$ is defined on the sphere. For this reason, we  
introduce a generalized cap discrepancy $\Lambda$ that extends the spherical cap discrepancy $\Delta$ to arbitrary point sets
in the Euclidean space $\left(\mathbb{R}^d\right)^N$ in a 
neighborhood of a given generic point set on the unit sphere.
The idea is to prove the local Lipschitz continuity of $\Lambda$ first and then to get as an immediate corollary the same property for the genuine spherical cap discrepancy $\Delta$ which is the restriction of $\Lambda$ to the sphere around generic point sets. 

\subsection{Definition and continuity of the generalized cap discrepancy}

In order to define the generalized cap discrepancy $\Lambda$ mentioned above, one could be tempted to directly extend the definition 
\eqref{capdef} of $\Delta$ to arbitrary Euclidean
point sets. For deriving the desired Lipschitz property, however, it is beneficial to restrict considerations to generic point sets and to take the representation formula \eqref{enumeration} in Theorem~\ref{genform} as a basis for defining $\Lambda$. From now on, we shall assume that $d\geq 3$ which is no substantial restriction because uniformity of point sets on a circle is trivial.

We start by introducing an extended cap measure $\mu^{Cap}:\mathbb{R}\to\mathbb{R}$ (for dimension $d\geq 3)$ in a way that it is continuously differentiable on $\mathbb{R}$ and coincides with the original cap measure $\mu^{cap}$ from \eqref{capmeasure} on $[-1,1]$ (which is continuously differentiable on $(-1,1)$). This is achieved by the following definition:
\begin{equation}\label{capextension}
\mu^{Cap}(t):=\left\{\begin{array}{cl}
\mu^{cap}(t),&t\in [-1,1],\\
-\frac12 t + \frac12,&|t|>1,\, d=3,\\
0,&t>1,\, d\geq 4,\\
1,&t<-1,\, d\geq 4.
\end{array}\right.
\end{equation}
Indeed, it is easily seen from \eqref{capmeasure} that $\left(\mu^{cap}\right)'(-1)=\left(\mu^{cap}\right)'(1)=0$, whenever $d\geq 4$. Hence the constant continuation by the respective function values $\mu^{cap}(1)=0,\, \mu^{cap}(-1)=1$ yields a continuously differentiable extension in this case. The special case $d=3$ cannot be treated in the same way because one easily sees that 
$\mu^{cap}(t)=-t/2+1/2$ for all $t\in [-1,1]$, so that the derivatives do not vanish at -1 and 1, respectively. We may therefore simply keep the definition of the function globally in order to end up at a continuously differentiable extension. Note also, that in the case of $d=2$ (which we excluded), there exists no continuously differentiable extension of $\mu^{cap}$ because its derivative converges to $-\infty$ with the argument $t$ converging to $\pm 1$.

It is easy to show that for all $d\geq 3$ we may extend relation \eqref{capcomp} to
\begin{equation}\label{minuscap}
    \mu^{Cap}(-t) = 1 - \mu^{Cap}(t)\qquad\forall t\in\mathbb{R}.
\end{equation}
Similarly to the cap measure, we may extend the empirical measure 
to arbitrary point sets by putting
\begin{equation}\label{empextend}
\mu ^{Emp}\left( X,w,t\right):=N^{-1}\cdot \#\left\{ i\in \left\{
1,\ldots ,N\right\} |\,x^{(i)}\in H(w,t)\right\}\quad\forall X\in\left(\mathbb{R}^{d}\right)^N\,\,\forall (w,t)\in\mathbb{R}^{d+1}.
\end{equation}
Clearly, for all normalized point sets $X\in\left(\mathbb{S}^{d-1}\right)^N$ it holds that
\begin{equation}\label{muempid}
\mu ^{Emp}\left( X,w,t\right)=\mu ^{emp}\left( X,w,t\right)\quad\forall (w,t)\in\mathbb{R}^{d+1}.
\end{equation}
For the following definition, we make reference to the quantities $t_I,w_I$ defined in \eqref{tIwI}
for $I\in\Phi$ with $\Phi$ as in \eqref{Phi}. Note that, in the previous section, all results were formulated for a fixed (generic) point set $X$. Therefore, for notational convenience, we did not emphasize the dependence of $t_I,w_I$ on $X$. In this section, however, we will investigate Lipschitz continuity of the spherical cap discrepancy based on the representation formula \eqref{enumeration}. Since now the point set will become a true variable, we will rather use the notations $t_I(X),w_I(X)$ in the definitions \eqref{tIwI} in order to stress the dependence on $X$. It is obvious that $t_I,w_I$ are continuous mappings on the set of generic point sets $X$. 
\begin{definition}\label{lambda}
For generic $X\in\left(\mathbb{R}^{d}\right)^N$, we define the generalized cap discrepancy
\begin{equation}\label{formula_lambda}
\Lambda(X)\,:=\,
\max\limits_{I\in \Phi}\max \left\{ \mu^{Emp}(X,w_{I}(X),t_{I}(X)) - \mu^{Cap}(t_{I}(X)),
\mu^{Emp}(X,-w_{I}(X),-t_{I}(X))-\mu^{Cap}(-t_{I}(X))\right\}.
\end{equation}
\end{definition}
\noindent
Thanks to Proposition \ref{genrem}, we make the following observation:
\begin{remark}\label{remgen}
If $\bar{X}\in\left(\mathbb{R}^{d}\right)^N$ is generic, then there exists a neighborhood $\mathcal{O}$ of $\bar{X}$ such that $\Lambda$ is defined on $\mathcal{O}$ and has the representation \eqref{formula_lambda} for all $X\in\mathcal{O}$.
\end{remark}
By Proposition \ref{normed}, it follows for generic $X\in\left(\mathbb{S}^{d-1}\right)^N$, that $|t_I|\leq 1$ for all $I\in\Phi$. This entails that $\mu^{Cap}(\pm t_I)=\mu^{cap}(\pm t_I)$ for all $I\in\Phi$. Moreover, by \eqref{muempid}, one also has in this case that $\mu ^{Emp}\left( X,\pm w_I,\pm t_I\right)=\mu ^{emp}\left( X,\pm w_I,\pm t_I\right)$ for all $I\in\Phi$. Now, \eqref{formula_lambda} and Theorem \ref{genform} entail that our generalized cap discrepancy reduces to the original spherical cap discrepancy for generic point sets on the sphere:
\begin{cor}\label{restriction}
For generic $X\in\left(\mathbb{S}^{d-1}\right)^N$ one has that $\Lambda(X) = \Delta(X)$.
\end{cor}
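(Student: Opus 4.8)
The plan is to verify the identity by matching the defining formula \eqref{formula_lambda} for $\Lambda$ term by term against the representation formula \eqref{enumeration} for $\Delta$ furnished by Theorem~\ref{genform}. Since $X$ is generic, all the quantities $t_I(X),w_I(X)$ with $I\in\Phi$ are well-defined, so $\Lambda(X)$ makes sense; and since $X\in\left(\mathbb{S}^{d-1}\right)^N$ with $d\geq 3$, everything needed to identify the ``capital-letter'' objects $\mu^{Cap},\mu^{Emp}$ with their ``lower-case'' counterparts $\mu^{cap},\mu^{emp}$ is available.

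First I would apply Proposition~\ref{normed} to get $0<t_I(X)\leq 1$, hence $\pm t_I(X)\in[-1,1]$, for every $I\in\Phi$. Because $\mu^{Cap}$ coincides with $\mu^{cap}$ on $[-1,1]$ by the first case of \eqref{capextension}, this yields $\mu^{Cap}(\pm t_I(X))=\mu^{cap}(\pm t_I(X))$ for all $I\in\Phi$. Next, since $X\in\left(\mathbb{S}^{d-1}\right)^N$, identity \eqref{muempid} gives $\mu^{Emp}(X,\pm w_I(X),\pm t_I(X))=\mu^{emp}(X,\pm w_I(X),\pm t_I(X))$ for all $I\in\Phi$. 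Substituting these two families of identities into \eqref{formula_lambda}, the right-hand side becomes literally the right-hand side of \eqref{enumeration}, which equals $\Delta(X)$ by Theorem~\ref{genform}. This finishes the argument.

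There is essentially no genuine obstacle here: the content of the corollary is carried entirely by Theorem~\ref{genform}, and the remaining work is the bookkeeping of checking that each condition under which $\mu^{Cap}=\mu^{cap}$ and $\mu^{Emp}=\mu^{emp}$ apply --- namely $t_I(X)\in[-1,1]$ and $X$ normalized --- holds across the whole index family $\Phi$. The one point worth stating explicitly is that Proposition~\ref{normed} delivers $t_I(X)\leq 1$ precisely because $X$ lies on the sphere (for a general generic Euclidean point set one only gets $t_I(X)>0$), so the coincidence of the two discrepancies is genuinely tied to normalized point sets, which is exactly the scope of the claim.
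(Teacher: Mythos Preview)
Your proof is correct and follows essentially the same route as the paper: use Proposition~\ref{normed} to pin $\pm t_I(X)$ in $[-1,1]$ so that $\mu^{Cap}=\mu^{cap}$ there, invoke \eqref{muempid} to identify $\mu^{Emp}$ with $\mu^{emp}$, and then match \eqref{formula_lambda} against \eqref{enumeration} via Theorem~\ref{genform}. Your added remark that the bound $t_I(X)\leq 1$ relies on $X\in\left(\mathbb{S}^{d-1}\right)^N$ is a nice clarification.
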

\noindent
The first basic ingredient for proving the local Lipschitz continuity of $\Lambda$ around a generic point set is the continuity itself at such point. Adding to this property later that $\Lambda$ is a selection of $\mathcal{C}^1$- functions, we will arrive at the desired Lipschitz result. Given the already proven continuity of the genuine discrepancy $\Delta$ at arbitrary point sets (Theorem \ref{continuity}), the following result shows the continuity of the generalized cap discrepancy $\Lambda$ at generic point sets. 
\begin{proposition}\label{lcontinuity} 
Let $\bar{X}\in\left(\mathbb{R}^{d}\right)^N$ be generic and $\mathcal{O}$ some open
neighborhood of $\bar{X}$ such that all $X\in\mathcal{O}$ are generic too (see Proposition \ref{genrem}). Then,
$\Lambda:\mathcal{O} \rightarrow \mathbb{R}$ is continuous.
\end{proposition}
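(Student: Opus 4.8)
\noindent
The plan is to show that $\Lambda$ is both upper and lower semicontinuous at an arbitrary $\bar X\in\mathcal{O}$. I would start from the finite-maximum representation $\Lambda=\max_{I\in\Phi}\max\{g_I^+,g_I^-\}$ with $g_I^\pm(X):=\mu^{Emp}(X,\pm w_I(X),\pm t_I(X))-\mu^{Cap}(\pm t_I(X))$, and use that the maps $X\mapsto t_I(X),w_I(X)$ are continuous on $\mathcal{O}$ while $\mu^{Cap}$ is continuous (indeed $\mathcal{C}^1$) on $\mathbb{R}$. Hence the only possible source of discontinuity is the integer-valued counting term $\mu^{Emp}(X,\pm w_I(X),\pm t_I(X))$, which can jump only when, for some $k$, the point $x^{(k)}$ crosses the hyperplane $\{\langle w_I(X),\cdot\rangle=t_I(X)\}$; everything reduces to controlling this.

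\noindent
For \emph{upper} semicontinuity it suffices, a finite maximum of u.s.c.\ functions being u.s.c., to show that each $g_I^\pm$ is u.s.c.\ at $\bar X$. For $g_I^+$, I would use that by continuity of $X\mapsto\langle w_I(X),x^{(k)}\rangle-t_I(X)$ there is a neighbourhood of $\bar X$ on which every index $k$ with $\langle w_I(\bar X),\bar x^{(k)}\rangle<t_I(\bar X)$ still satisfies the strict inequality; consequently, for $X$ in that neighbourhood, $\{k:\langle w_I(X),x^{(k)}\rangle\ge t_I(X)\}\subseteq\{k:\langle w_I(\bar X),\bar x^{(k)}\rangle\ge t_I(\bar X)\}$, so $\mu^{Emp}(X,w_I(X),t_I(X))\le\mu^{Emp}(\bar X,w_I(\bar X),t_I(\bar X))$. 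Adding the continuous term $-\mu^{Cap}(t_I(\cdot))$ preserves u.s.c., and the sign-flipped case for $g_I^-$ is identical.

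\noindent
For \emph{lower} semicontinuity, fix $\varepsilon>0$ and pick $I^*\in\Phi$ and a sign realizing $\Lambda(\bar X)$, say $\Lambda(\bar X)=g_{I^*}^+(\bar X)$, with $w^*:=w_{I^*}(\bar X)$, $t^*:=t_{I^*}(\bar X)>0$ (Proposition~\ref{normed}) and $P:=\{k:\langle w^*,\bar x^{(k)}\rangle=t^*\}\supseteq I^*$. If $\#P\le d$, then $P\in\Phi$, and I would apply Lemma~\ref{indexexpansion} repeatedly, adjoining the indices of $P\setminus I^*$ one at a time (admissible since $\#P\le d$), to get $w_P(\bar X)=w^*$ and $t_P(\bar X)=t^*$. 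By Proposition~\ref{normed} every $k\in P$ satisfies $\langle w_P(X),x^{(k)}\rangle=t_P(X)$ identically, while every $k\notin P$ retains the strict sign of $\langle w^*,\bar x^{(k)}\rangle-t^*$ for $X$ near $\bar X$; hence $X\mapsto\mu^{Emp}(X,w_P(X),t_P(X))$ is locally constant near $\bar X$, so $g_P^+(X)\to\mu^{Emp}(\bar X,w^*,t^*)-\mu^{Cap}(t^*)=\Lambda(\bar X)$, giving $\Lambda(X)\ge g_P^+(X)>\Lambda(\bar X)-\varepsilon$ on a neighbourhood of $\bar X$.

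\noindent
The case $\#P>d$ is the step I expect to be the main obstacle: no size-$d$ subset $J\subseteq P$ makes the counting term locally constant, because the $\#P-d$ points of $P$ not in $J$ sit exactly on the limiting hyperplane and can drop out of the cap under perturbation. I would resolve it by a convexity argument: for the given $X$ near $\bar X$, choose $J\subseteq P$ with $\#J=d$ spanning a \emph{lower} supporting hyperplane of the nearly-coplanar configuration $\{x^{(k)}:k\in P\}$ — such $J$ exists because the outer facet normals of a bounded polytope positively span $\mathbb{R}^d$, with genericity supplying $d$ affinely independent vertices (and if the configuration happens to stay inside one hyperplane, one just takes that hyperplane). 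The orientation is forced by $t_J(X)>0$ together with the origin lying on the $\{<t^*\}$ side of the limiting hyperplane, so $w_J(X)\to w^*$ and $t_J(X)\to t^*$; then all points indexed by $P$, and all $k\notin P$ with $\langle w^*,\bar x^{(k)}\rangle>t^*$, lie in $H(w_J(X),t_J(X))$ for $X$ close to $\bar X$, whence $\mu^{Emp}(X,w_J(X),t_J(X))\ge\mu^{Emp}(\bar X,w^*,t^*)$; since $t_J(X)\to t^*$ uniformly over the finitely many size-$d$ subsets of $P$, this yields $\Lambda(X)\ge\Lambda(\bar X)-\varepsilon$ nearby. Combining upper and lower semicontinuity gives continuity of $\Lambda$ on $\mathcal{O}$; the delicate point is precisely this recovery of the "lost" boundary points via a supporting hyperplane, the rest being routine semicontinuity bookkeeping with Lemma~\ref{indexexpansion} doing the work in the non-degenerate case.
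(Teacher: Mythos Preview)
Your proposal is correct and follows essentially the same route as the paper: upper semicontinuity comes from each $g_I^\pm$ being u.s.c.\ (the paper argues this sequentially on $\Lambda$, but it is the same observation that strict inequalities $\langle w_I(\bar X),\bar x^{(k)}\rangle<t_I(\bar X)$ persist), and lower semicontinuity uses Lemma~\ref{indexexpansion} to absorb the boundary indices when $\#P\le d$ and a supporting-hyperplane/facet argument on $\operatorname{conv}\{x^{(k)}:k\in P\}$ when $\#P>d$ --- exactly the content of the paper's appendix Lemma~\ref{mainlemma} (Cases~1 and~2 there). The one place to tighten is your justification for the existence of the ``lower'' facet: rather than invoking that the outer normals positively span $\mathbb{R}^d$, the cleaner reason (and the one the paper uses) is that $0\notin\operatorname{conv}\{x^{(k)}:k\in P\}$ --- since $\langle w^*,x^{(k)}\rangle>t^*/2>0$ for $X$ near $\bar X$ --- which forces at least one facet offset $\tau_{k_0}>0$ in the $g^+$ case (respectively $\tau_{k_1}<0$ in the $g^-$ case by boundedness of the polytope).
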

\begin{proof}
Of course, it is sufficient to prove continuity of $\Lambda$ at the arbitrarily fixed generic point set $\bar{X}$ which entails continuity on the whole neighbourhood $\mathcal{O}$ mentioned in the statement of Proposition \ref{lcontinuity}. We shall show first the lower and later the upper semicontinuity of $\Lambda$ at $\bar{X}$, thus proving continuity itself. Let $I^*\in\Phi$ be some index set realizing the maximum in \eqref{formula_lambda}, so that $\Lambda (\bar{X})=\mu^{Emp}(\bar{X},w^*,t^*)-\mu^{Cap}(t^*)$ for some $(w^*,t^*)\in\pm\{(w_{I^*}(\bar{X}),t_{I^*}(\bar{X})\}$.
We fix an arbitrary $\varepsilon >0$. 
Now, by Lemma \ref{mainlemma} proven in the appendix, we can find some $\delta>0$, small enough
such that $\mathbb{B}_\delta (\bar{X})\subseteq\mathcal{O}$, and in such a way that  
choosing an arbitrary $X\in\mathbb{B}_\delta (\bar{X})$, we find $J,$ $w$ and $t$ with
$J\in\Phi$, $(w,t)\in\pm\{(w_J(X),t_J(X))\}$ satisfying 
\[
\mu^{Emp}(X,w,t) = \mu^{Emp}(\bar{X},w^*,t^*)\quad\mbox{and}\quad
\mu^{Cap}(t)<\mu^{Cap}(t^*)+\varepsilon.
\]
In particular, by \eqref{formula_lambda}, then
\[
\Lambda (X)\geq\mu^{Emp}(X,w,t)-\mu^{Cap}(t)
=\mu^{Emp}(\bar{X},w^*,t^*)-\mu^{Cap}(t^*)+\mu^{Cap}(t^*)-\mu^{Cap}(t)>\Lambda(\bar{X})-\varepsilon .
\]
This means that $\Lambda$ is lower semicontinuous at $\bar{X}$.
In order to show that $\Lambda$ is also upper semicontinuous at $\bar{X}$, we assume to the contrary that there exist some $c>0$ as well as a sequence
$X_n\to\bar{X}$ such that
\begin{equation}\label{Xn}
     \Lambda(X_n) > \Lambda(\bar{X}) + c\quad\forall n\in\mathbb{N}.
\end{equation}
For each $n\in\mathbb{N}$, choose $I_n^*\in \Phi$ and
$\left(w_n^*(X_n),t_n^*(X_n)\right)\in\pm\left\{(w_{I_n^*}(X_n),t_{I_n^*}(X_n))\right\}$
such that $\Lambda(X_n)$ is realized, i.e.,
\[
\Lambda(X_n) = \mu^{Emp}(X_n,w_n^*(X_n),t_n^*(X_n)) - \mu^{Cap}(t_n^*(X_n)).
\]
Since $\Phi$ is a finite set, there exists 
some $\emptyset\neq I^*\subseteq\{1,\ldots,N\}$ such that, upon passing to a subsequence, $I_n^* = I^*$ for all $n\in\mathbb{N}$. Once more, by passing to a subsequence,
we may assume that for all $n\in\mathbb{N}$ either
\[
a)\quad\left(w_n^*(X_n),t_n^*(X_n)\right)=\left(w_{I^*}(X_n),t_{I^*}(X_n)\right)  \quad\mbox{or}\quad
b)\quad\left(w_n^*(X_n),t_n^*(X_n)\right)=\left(-w_{I^*}(X_n),-t_{I^*}(X_n)\right).
\]
We consider just case $a)$ here (the second case being completely analogous). By continuity of $w_{I^*}$ and $t_{I^*}$, we have that
$w_n^*(X_n)\to w_{I^*}(\bar{X})$ and $t_n^*(X_n)\to t_{I^*}(\bar{X})$ as 
$n\rightarrow\infty$. From the definition in \eqref{empextend} it follows easily for continuity reasons that the empirical measure at some triple $(X,w,t)$ is always larger than or equal to the empirical measure of triples $(X',w',t')$ in a sufficiently small neighborhood of $(X,w,t)$.
Accordingly,
\[
\mu^{Emp}\left(X_n,w_n^*(X_n),t_n^*(X_n)\right)\leq \mu^{Emp}\left(\bar{X},w_{I^*}(\bar{X}),t_{I^*}(\bar{X})\right)
\]
for $n$ large enough.  Moreover, the continuity of the cap measure implies that
\[
\left|\mu^{Cap}\left(t_{I^*}(\bar{X})\right) - \mu^{Cap}\left(t_n^*(X_n)\right) \right| \leq c
\]
for sufficient large $n$. Consequently, there exists some $n_0\in\mathbb{N}$ with
\begin{eqnarray*}
  \Lambda(X_n) &=& \mu^{Emp}(X_n,w_n^*(X_n),t_n^*(X_n)) - \mu^{Cap}(t_n^*(X_n))\\
               &\leq& \mu^{Emp}(\bar{X},w_{I^*}(\bar{X}),t_{I^*}(\bar{X})) - \mu^{Cap}(t_{I^*}(\bar{X}))
                + \mu^{Cap}(t_{I^*}(\bar{X})) - \mu^{Cap}(t_n^*(X_n)) \leq   \Lambda(\bar{X}) + c
\end{eqnarray*}
for all $n\geq n_0$, which is a contradiction to inequality \eqref{Xn}.
Hence, $\Lambda$ is also upper semicontinuous at $\bar{X}$.
\end{proof}

\subsection{Local Lipschitz continuity of the generalized cap discrepancy at generic point sets}

Now we turn to the Lipschitz continuity of the generalized cap discrepancy locally around a generic point set $\bar{X}$. As before, we denote by $\mathcal{O}$ an open neighborhood of $\bar{X}$ of generic point sets. According to \eqref{formula_lambda}, we have the representation
\begin{equation}\label{maxphi}
    \Lambda(X) = \max_{I\in\Phi} \max \big\{ \varphi_I^{(1)}(X), \varphi_I^{(2)}(X) \big\}\qquad
    \forall X\in\mathcal{O},
\end{equation}
where
\begin{equation}\label{phi}
\varphi_I^{(1)}(X) := \mu^{Emp}(X,w_I(X),t_I(X)) - \mu^{Cap}(t_I(X));\quad\varphi_I^{(2)}(X) := \mu^{Emp}(X,-w_I(X),-t_I(X)) - \mu^{Cap}(-t_I(X)).
\end{equation}
As a preparatory step, we prove the following Lemma:
\begin{lemma}\label{phiconnect}Let $\bar{X}\in\left(\mathbb{R}^{d}\right)^N$ be generic and $\mathcal{O}$ some open
neighborhood of $\bar{X}$ such that all $X\in\mathcal{O}$ are generic too. Then, there exists a neighborhood
$\mathcal{U}\subseteq\mathcal{O}$ of $\bar X$ such that for all $I\in\Phi$ and all $X\in\mathcal{U}$ there holds:
\[
\begin{split}
    &\Lambda(\bar X) = \varphi_I^{(1)} (\bar X),\,\,\Lambda(X) = \varphi_I^{(1)}(X) \,\Rightarrow\,
     \mu^{Emp}(X,w_I(X),t_I(X)) = \mu^{Emp}(\bar X,w_I(\bar X),t_I(\bar X)),\\
    &\Lambda(\bar X) = \varphi_I^{(2)} (\bar X),\,\,\Lambda(X) = \varphi_I^{(2)}(X) \,\Rightarrow\,
     \mu^{Emp}(X,-w_I(X),-t_I(X)) = \mu^{Emp}(\bar X,-w_I(\bar X),-t_I(\bar X)).
\end{split}
\]
\end{lemma}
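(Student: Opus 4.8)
The plan is to exploit the fact that both $\mu^{Emp}$ (for fixed $w,t$) and the maps $X\mapsto w_I(X),\,t_I(X)$ behave well near the generic point set $\bar X$, so that the only way $\Lambda(X)$ can be realized by the \emph{same} index $I$ and the \emph{same} sign branch as at $\bar X$ is if no data point $x^{(k)}$ has crossed the hyperplane $\langle w_I(X),\cdot\rangle = t_I(X)$ relative to its position at $\bar X$. I will treat the first implication (the $\varphi^{(1)}$ case); the second is verbatim with $(w_I,t_I)$ replaced by $(-w_I,-t_I)$ and $\mu^{Cap}(t_I)$ by $\mu^{Cap}(-t_I)$, using \eqref{minuscap}.

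First I would record the obstruction coming from the definition \eqref{empextend}: the only indices that can cause $\mu^{Emp}(X,w_I(X),t_I(X))\neq\mu^{Emp}(\bar X,w_I(\bar X),t_I(\bar X))$ for $X$ near $\bar X$ are those $k\in\{1,\ldots,N\}$ which lie on the relative boundary of the cap at $\bar X$, i.e.\ with $\langle w_I(\bar X),\bar x^{(k)}\rangle = t_I(\bar X)$: for every other $k$ the strict inequality $\langle w_I(\bar X),\bar x^{(k)}\rangle \gtrless t_I(\bar X)$ persists on a whole neighborhood by continuity of $X\mapsto w_I(X),t_I(X)$ and of the inner product, so the membership $x^{(k)}\in H(w_I(X),t_I(X))$ is locally constant. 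Hence it suffices to fix such a boundary index $k$ and show it cannot flip sign on a suitable neighborhood $\mathcal U$, \emph{provided} $\Lambda(X)=\varphi_I^{(1)}(X)$ and $\Lambda(\bar X)=\varphi_I^{(1)}(\bar X)$.

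The key step is the following dichotomy for a boundary index $k\notin I$ with $\langle w_I(\bar X),\bar x^{(k)}\rangle = t_I(\bar X)$. By Lemma~\ref{indexexpansion} applied at $\bar X$, the set $J:=I\cup\{k\}$ (which lies in $\Phi$ as long as $\#I<d$; the case $\#I=d$ is handled separately, see below) satisfies $w_J(\bar X)=w_I(\bar X)$ and $t_J(\bar X)=t_I(\bar X)$, whence $\varphi_J^{(1)}(\bar X)=\varphi_I^{(1)}(\bar X)=\Lambda(\bar X)$, so $J$ also realizes the maximum at $\bar X$. Now if $k$ were to move to the \emph{strict interior} $\langle w_I(X),x^{(k)}\rangle > t_I(X)$ for some $X$ arbitrarily close to $\bar X$, then $\mu^{Emp}(X,w_I(X),t_I(X))$ would strictly exceed $\mu^{Emp}(\bar X,w_I(\bar X),t_I(\bar X))$ by at least $N^{-1}$; combined with continuity of $\mu^{Cap}$ this would force $\varphi_I^{(1)}(X)\geq\varphi_I^{(1)}(\bar X)+N^{-1}/2$ for $X$ close enough, contradicting the continuity of $\Lambda$ (Proposition~\ref{lcontinuity}) together with $\Lambda(X)=\varphi_I^{(1)}(X)$ and $\Lambda(\bar X)=\varphi_I^{(1)}(\bar X)$. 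So $k$ cannot move strictly inside; if instead $k$ moves to the strict \emph{exterior}, then the membership count decreases — but this is exactly the case whose symmetric treatment via the index $J$ rules out: using that $\varphi_J^{(1)}(X)\leq\Lambda(X)=\varphi_I^{(1)}(X)$ and that $\varphi_J^{(1)}$ and $\varphi_I^{(1)}$ differ only through the term $N^{-1}\cdot[x^{(k)}\in H]$ when $J$'s cap coincides with $I$'s — one pins down the count from below as well. I would make this precise by choosing a single neighborhood $\mathcal U$ small enough to work simultaneously for all finitely many pairs $(I,k)$ and both sign branches, intersecting the finitely many neighborhoods produced.

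The main obstacle I anticipate is twofold: handling the boundary index $k$ that belongs to $I$ itself (for those, $\langle w_I(X),x^{(k)}\rangle = t_I(X)$ holds \emph{identically} by Proposition~\ref{normed}, so $x^{(k)}\in H(w_I(X),t_I(X))$ always — no flip possible — which is actually the easy sub-case), and, more delicately, the situation $\#I=d$ where $J=I\cup\{k\}$ falls outside $\Phi$ so Lemma~\ref{indexexpansion} is unavailable. In that case I would argue directly: genericity forces $x^{(k)}$ to be a strict convex-type combination landing off the hyperplane unless it sits exactly on it, and a perturbation argument (or a direct estimate showing the exterior move again contradicts $\Lambda(X)=\varphi_I^{(1)}(X)$ via some competing index realizing a strictly larger value, exploiting Corollary~\ref{capdefcor}-type monotonicity) closes the gap. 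Assembling these local neighborhoods over the finite index set $\Phi$ and the two branches yields the claimed $\mathcal U$.
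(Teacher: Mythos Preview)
Your proposal is overcomplicated and contains a directional error that obscures the fact that you already have the full argument in hand. The paper's proof is a three-line contradiction: if the implication fails along some sequence $X_n\to\bar X$ with indices $I_n$, then the empirical measures at $\bar X$ and at $X_n$ (both multiples of $1/N$) differ by at least $1/N$, while by uniform continuity of the finitely many maps $\mu^{Cap}\circ t_I$ one has $|\mu^{Cap}(t_{I_n}(\bar X))-\mu^{Cap}(t_{I_n}(X_n))|\le 1/(2N)$ for large $n$; hence $|\varphi_{I_n}^{(1)}(\bar X)-\varphi_{I_n}^{(1)}(X_n)|\ge 1/(2N)$, and since both sides equal $\Lambda$ at their respective points this contradicts the continuity of $\Lambda$ (Proposition~\ref{lcontinuity}). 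No case distinction on boundary indices, no Lemma~\ref{indexexpansion}, no obstacle at $\#I=d$.

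Your own ``interior move'' paragraph \emph{is} essentially this argument, but you apply it to a vacuous case: since $H(w,t)$ is defined by $\langle w,x\rangle\ge t$ (closed half-space), every boundary index $k$ with $\langle w_I(\bar X),\bar x^{(k)}\rangle=t_I(\bar X)$ is \emph{already} counted in $\mu^{Emp}(\bar X,w_I(\bar X),t_I(\bar X))$; moving to the strict interior does not increase the count, and only an exterior move changes anything (a decrease). Once you observe that any nonzero change in $\mu^{Emp}$ forces $|\varphi_I^{(1)}(X)-\varphi_I^{(1)}(\bar X)|\ge 1/(2N)$, the contradiction with continuity of $\Lambda$ follows regardless of sign, and the entire apparatus involving $J=I\cup\{k\}$ --- which further relies on $w_J(X)=w_I(X)$, an equality that Lemma~\ref{indexexpansion} guarantees only at $\bar X$, not at nearby $X$ --- together with the delicate $\#I=d$ case becomes unnecessary.
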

\begin{proof}
Without loss of generality, we prove just the first implication and assume it would not hold true. Then, there exist sequences $X_n\to\bar{X}$ and $I_n\in\Phi$ such that
\[
 \Lambda(\bar X) = \varphi_{I_n}^{(1)} (\bar X),\,\,\Lambda(X_n) = \varphi_{I_n}^{(1)}(X_n),\,\,
 \big|\mu^{Emp}(\bar X,w_{I_n}(\bar X),t_{I_n}(\bar X)) 
              - \mu^{Emp}(X_n,w_{I_n}(X_n),t_{I_n}(X_n))| \geq \frac1N.
\]
In the last inequality, we used the fact that the values of $\mu^{Emp}$ are multiples of $\frac1N$. Moreover, by continuity on $\mathcal{O}$ of $\mu^{Cap}\circ t_{I}$ for all $I\in\Phi$, we have that, for $n$ large enough,
\[
\left|\mu^{Cap}(t_{I_n}(\bar X) - \mu^{Cap}(t_{I_n}(X_n)\right|\leq \frac{1}{2N}
\]
whenever $\mathcal{U}$ is small enough.
Consequently, for $n$ large enough, we arrive at the following contradiction with the continuity of $\Lambda$ shown in Proposition \ref{lcontinuity}:
\[
    |\Lambda(\bar X) - \Lambda(X_n)| = \left| \varphi_{I_n}^{(1)} (\bar X) 
     - \varphi_{I_n}^{(1)}(X_n) \right|\geq\frac{1}{2N}.
\]
\end{proof}

\noindent
A natural idea to show the local Lipschitz continuity around generic points of the maximum function $\Lambda$ in \eqref{maxphi} would rely on checking the continuous differentiability or at least local Lipschitz continuity of the elementary functions $\varphi_I^{(1)},\varphi_I^{(2)}$. This, however, does not apply because these functions fail to be even continuous as a consequence of the discontinuity of $\mu^{Emp}$. The fact is illustrated for a numerical example in Figure \ref{fig:discont}.
\begin{figure}[htb]
    \centering
    \includegraphics[width=0.5\linewidth]{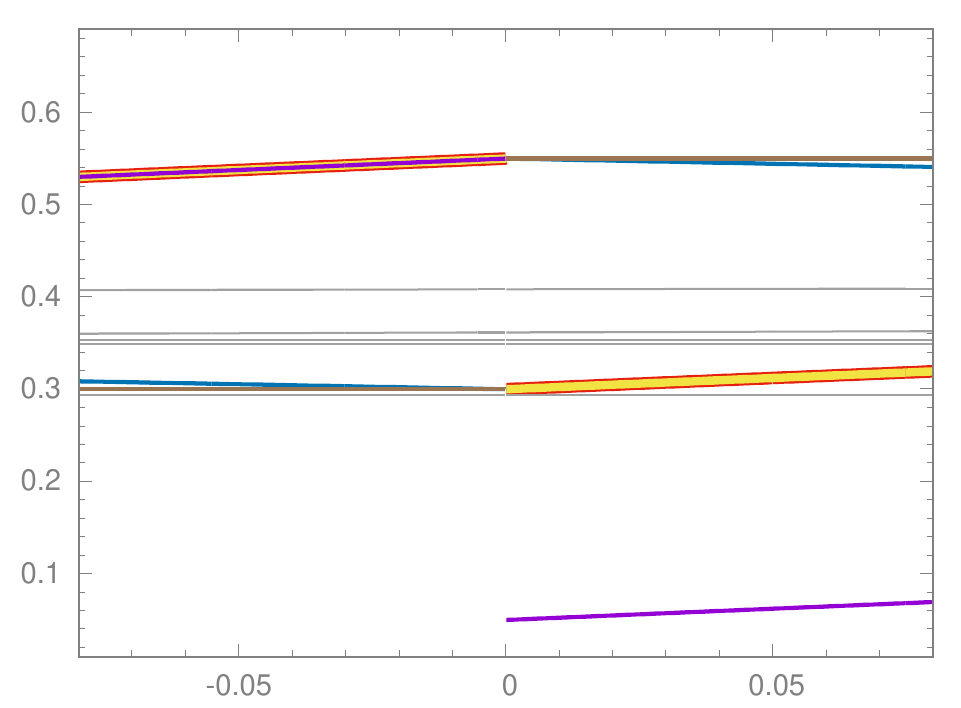}
    \caption{The cap discrepancy as a maximum of discontinuous elementary functions.}
    \label{fig:discont}
\end{figure}
Here, a generic set $\bar{X}$ of four points in $\mathbb{R}^3$ is considered and subjected to one-parametric variation (shifting one of the four points while keeping the others fixed). The variation parameter zero corresponds to the nominal point set $\bar{X}$. The figure plots the elementary functions $\varphi_I^{(1)},\varphi_I^{(2)}$ (depending on just one parameter) with those participating in the maximum of \eqref{maxphi} at $\bar{X}$ being colored.
As can be seen, the maximum $\Lambda$ of all these functions is continuous as it should be according to Proposition \ref{lcontinuity}. However, all elementary functions being active for the maximum at the nominal point set $\bar{X}$ exhibit jumps at that same point set. Still, the maximum function $\Lambda$ is apparently not only continuous but even Lipschitz continuous. To show this rigorously, we shall represent $\Lambda$ as a selection (not a maximum though!) of finitely many smooth functions. It is well known that continuous selections of smooth (or more generally: locally Lipschitzian) functions are locally Lipschitzian. The desired selection cannot be made among the original elementary functions $\varphi_I^{(1)},\varphi_I^{(2)}$ due to their discontinuity. 
We therefore define smooth modifications of these functions by locally fixing $\mu^{Emp}$ around the nominal point set $\bar{X}$:
\begin{equation}\label{tildephi}
\tilde\varphi_I^{(1)}(X) := \mu^{Emp}(\bar{X},w_I(\bar{X}),t_I(\bar{X})) - \mu^{Cap}(t_I(X));\quad
\tilde\varphi_I^{(2)}(X) := \mu^{Emp}(\bar{X},-w_I(\bar{X}),-t_I(\bar{X})) - \mu^{Cap}(-t_I(X)).
\end{equation}
Clearly, the desired smoothness of the $\tilde\varphi_I^{(1)},\tilde\varphi_I^{(2)}$ will follow 
from the continuous differentiability of the functions
$\beta_I:=\mu^{Cap}\circ t_I$ for $I\in\Phi$ around some arbitrary generic point set $X$.
\begin{lemma}\label{betderiv}
Let $\bar{X}\in\left(\mathbb{R}^{d}\right)^N$ be generic and $\mathcal{O}$ a neighbourhood of $\bar{X}$ such that all $X\in\mathcal{O}$ are generic too. Then, for each $I\in\Phi$, the function $\beta_I$ is continuously differentiable on $\mathcal{O}$ with the following partial gradients w.r.t. $x^{(l)}$, ($l\in \{1,\ldots ,N\}$):
\begin{equation}\label{partderiv}
\nabla_{x^{(l)}}\beta_I(X)=\left\{\begin{array}{ccl}0,&\mbox{if}&l\notin I
\mbox{ or }|t_I(X)|\geq 1,\, d\geq 4,\\[1ex]
-\frac12 t_I^2(X) c_I^{\tau(l)}w_I(X),&\mbox{if}&l\in I,\, d=3,\\[1ex]
\rho_I c_I^{\tau(l)}w_I(X),&\mbox{if}&l\in I,\, |t_I(X)|< 1,\, d\geq 4,
\end{array}\right.\qquad\forall X\in\mathcal{O}.
\end{equation}
Here, with $C_d$ from \eqref{cddef},
\[
\rho_I:=-C_d \, t_I^2(X) \left(1-t^2_I(X)\right)^\frac{d-3}{2}\quad\mbox{and}\quad c_I^j:=\sum\limits_{i=1}^{\#I}
(X_I^TX_I)^{-1}_{i,j}\quad (j=1,\ldots ,\#I).
\]
Moreover, for $l\in I$, the index $\tau(l)$ refers to the rank of $l$ in the index set $I$, i.e., if $I=\{\varkappa_1,\ldots ,\varkappa_{\#I}\}$, then $l=\varkappa_{\tau(l)}$.
\end{lemma}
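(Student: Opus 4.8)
The plan is to reduce everything to the chain rule applied to the composition $\beta_I=\mu^{Cap}\circ t_I$, using that the outer function $\mu^{Cap}$ is $\mathcal{C}^1$ on all of $\mathbb{R}$ by construction \eqref{capextension} (and the discussion following it), while the inner function $t_I$ is a smooth function of the columns $x^{(i)}$, $i\in I$, on the open set of generic point sets: there $X_I^\top X_I$ is invertible and positive definite, so $\mathbf{1}^\top(X_I^\top X_I)^{-1}\mathbf{1}>0$ and $(\cdot)^{-1/2}$ is applied to a strictly positive argument. In particular $t_I$, hence $\beta_I$, does not depend on $x^{(l)}$ for $l\notin I$, which already yields the first branch of \eqref{partderiv}; and by the chain rule, for $l\in I$,
\[
\nabla_{x^{(l)}}\beta_I(X)=\left(\mu^{Cap}\right)'\!\big(t_I(X)\big)\,\nabla_{x^{(l)}}t_I(X),
\]
so it remains to compute the two factors. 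Since $t_I(X)>0$ by Proposition \ref{normed}, only the cases $t_I(X)\in(0,1)$ and $t_I(X)\geq 1$ occur.

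For the outer factor I would differentiate \eqref{capmeasure}: for $t\in(-1,1)$, using $\tfrac{d}{dt}\arccos t=-1/\sqrt{1-t^2}$ and $\sin(\arccos t)=\sqrt{1-t^2}$, one gets $\left(\mu^{cap}\right)'(t)=-C_d(1-t^2)^{(d-3)/2}$. For $d=3$ this equals $-1/2$ (consistent with $\mu^{cap}(t)=-t/2+1/2$), and $\left(\mu^{Cap}\right)'$ takes the same value $-1/2$ for $|t|>1$ by \eqref{capextension}. For $d\geq 4$ we have $\left(\mu^{Cap}\right)'(t)=0$ for $|t|>1$, and also at $t=\pm 1$ by the $\mathcal{C}^1$-extension (equivalently, $(1-t^2)^{(d-3)/2}\to 0$ as $t\to\pm 1$ since $d-3\geq 1$).

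For the inner factor I would use the standard matrix-differential identities. Writing $A:=X_I^\top X_I$ and $u:=A^{-1}\mathbf{1}$, one has $t_I=(\mathbf{1}^\top u)^{-1/2}$, $d(A^{-1})=-A^{-1}(dA)A^{-1}$ and $dA=(dX_I)^\top X_I+X_I^\top(dX_I)$, whence, using symmetry of $A^{-1}$, $d(\mathbf{1}^\top u)=-u^\top(dA)u=-2(X_I u)^\top(dX_I)u$. Varying only the column $x^{(l)}$ with $l=\varkappa_{\tau(l)}$ gives $(dX_I)u=u_{\tau(l)}\,dx^{(l)}$, hence $\nabla_{x^{(l)}}(\mathbf{1}^\top u)=-2u_{\tau(l)}(X_I u)$. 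By \eqref{tIwI}, $X_I u=X_I A^{-1}\mathbf{1}=w_I/t_I$, and by symmetry of $A^{-1}$, $u_{\tau(l)}=\sum_i(A^{-1})_{i,\tau(l)}=c_I^{\tau(l)}$; combining with $\nabla_{x^{(l)}}t_I=-\tfrac12(\mathbf{1}^\top u)^{-3/2}\nabla_{x^{(l)}}(\mathbf{1}^\top u)=-\tfrac12 t_I^3\,\nabla_{x^{(l)}}(\mathbf{1}^\top u)$ yields $\nabla_{x^{(l)}}t_I(X)=t_I^2(X)\,c_I^{\tau(l)}\,w_I(X)$.

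Substituting the two factors into the chain-rule formula reproduces the three branches of \eqref{partderiv}, with $\rho_I=-C_d t_I^2(1-t_I^2)^{(d-3)/2}$ in the case $d\geq 4$, $0<t_I<1$, and value $0$ when $t_I\geq 1$. Finally, continuity of $\nabla\beta_I$ on $\mathcal{O}$ follows because it equals $\left(\mu^{Cap}\right)'(t_I(\cdot))\,\nabla t_I(\cdot)$, a product of continuous maps (here $\mu^{Cap}\in\mathcal{C}^1$, and $t_I,w_I,c_I^{\tau(l)}$ are continuous on the set of generic points); in particular, across the interface $t_I=1$ for $d\geq 4$ the expression $\rho_I c_I^{\tau(l)}w_I$ tends to $0$, so there is no jump. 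I expect the only genuine work to be the bookkeeping in the matrix-differential computation of $\nabla_{x^{(l)}}t_I$ — correctly isolating the single varying column, identifying the scalar $u_{\tau(l)}$ with $c_I^{\tau(l)}$ and $X_I u$ with $w_I/t_I$ — everything else being a routine chain rule.
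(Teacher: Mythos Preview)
Your argument is correct and follows essentially the same route as the paper: both apply the chain rule to $\beta_I=\mu^{Cap}\circ t_I$, compute $(\mu^{Cap})'$ from \eqref{capmeasure} and \eqref{capextension} with the same case distinction, and differentiate $\alpha=\mathbf{1}^\top(X_I^\top X_I)^{-1}\mathbf{1}$ by matrix calculus, identifying $X_I(X_I^\top X_I)^{-1}\mathbf{1}=w_I/t_I$ and the $\tau(l)$-th entry of $(X_I^\top X_I)^{-1}\mathbf{1}$ with $c_I^{\tau(l)}$. The only difference is notational: you use the differential formalism $d(A^{-1})=-A^{-1}(dA)A^{-1}$ directly, whereas the paper writes out component-wise partial derivatives with reference to standard matrix-calculus identities; the resulting formula $\nabla_{x^{(l)}}\beta_I=(\mu^{Cap})'(t_I)\,t_I^2\,c_I^{\tau(l)}\,w_I$ is identical.
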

\begin{proof}
Consider an arbitrary $X\in\mathcal{O}$ be arbitrary, whence $X$ is generic.  Let $I\in\Phi$ be arbitrary. We assume that $I=\{\varkappa_1,\ldots ,\varkappa_{\#I}\}\subseteq\{1,\ldots ,N\}$. We want to derive first the function
\begin{equation}\label{aldef}
\alpha (X):=\mathbf{1}^T\left(X_I^TX_I\right)^{-1}\mathbf{1}
\end{equation}
From well-known rules of matrix differential calculus (see, e.g., \cite{magnus99}) one obtains with $M(X):=X_I^TX_I$ that
\[
\frac{\partial\alpha}{\partial X_{k,l}}(X)=
-2\sum\limits_{q=1}^{\#I}\left(\sum\limits_{i=1}^{\#I}[M(X)]^{-1}_{i,\tau(l)}\right)\left(\sum\limits_{j=1}^{\#I}[M(X)]^{-1}_{j,q}\right)X_{k,\varkappa_q}
\]
with $\tau(l)$ as introduced in the statement of this lemma
(for a detailed argumentation, we refer to the preprint version of this paper \cite{HeiHenPre25}). By definition of $M(X)$ and of the coefficients $c_I^j$ introduced in the statement of this Lemma, we obtain that
\begin{equation}\label{alderiv}
\nabla_{x^{(l)}}\alpha (X)=-2c_I^{\tau(l)}\sum\limits_{q=1}^{\#I}c_I^qx^{(\varkappa_q)}
\end{equation}
Next, we observe that, for all $q=1,\ldots ,\#I$,
\[
\left[\left(X_I^TX_I\right)^{-1}\mathbf{1}\right]_q=\sum_{i=1}^{\#I}\left(X_I^TX_I\right)^{-1}_{q,i}=\sum_{i=1}^{\#I}\left(X_I^TX_I\right)^{-1}_{i,q}=c_I^q.
\]
Consequently, by definition \eqref{tIwI},
\[
w_I(X)=t_I(X) X_I\left(X_I^\top X_I\right)^{-1}\mathbf{1}=
t_I(X)\sum_{q=1}^{\#I}c_I^qx^{(\varkappa_j)}.
\]
Thanks to \eqref{alderiv}, this entails that
\begin{equation}\label{alderiv2}
\nabla_{x^{(l)}}\alpha (X)=-2\frac{c_I^{\tau(l)}}{t_I(X)}w_I(X)
\end{equation}
We observe next that the function $\mu^{cap}$ defined in \eqref{capmeasure} is continuously differentiable for $d\geq 3, t\in (0,1)$ with 
\[
[\mu^{cap}]'(t)=-C_d(1-t^2)^\frac{d-3}{2}.
\]
Along with \eqref{capextension} and the explanations below this equation, this yields that $\mu^{Cap}$ is continuously differentiable with
\begin{equation}\label{casedist}
[\mu^{Cap}]'(t)=\left\{\begin{array}{ccl}0,&\mbox{if}&|t_I(X)|\geq 1, d\geq 4,\\[1ex]
-\frac{1}{2},&\mbox{if}&d=3,\\[1ex]
-C_d(1-t^2)^\frac{d-3}{2},&\mbox{if}&|t_I(X)|< 1.
\end{array}\right.
\end{equation}
Moreover, the function $t_I=\alpha^{-1/2}$ defined in \eqref{tIwI} and \eqref{aldef} is continuously differentiable in the generic point set $X$ because $\alpha$ was shown so in \eqref{alderiv}.
Therefore, the function $\beta=\mu^{Cap}\circ t_I$ is continuously differentiable in $X$ with
\begin{equation}
\frac{\partial\beta_I}{\partial X_{k,l}}(X)=[\mu^{Cap}]'(t_I(X))\cdot \frac{\partial t_I}{\partial X_{k,l}}(X)=-\frac{1}{2}[\mu^{Cap}]'(t_I(X))\cdot[\alpha(X)]^{-3/2}\frac{\partial\alpha}{\partial X_{k,l}}(X),
\end{equation}
whence, along with \eqref{alderiv2},
\begin{equation}\label{betaderiv}
\nabla_{x^{(l)}}\beta_I(X)= -\frac{1}{2}[\mu^{Cap}]'(t_I(X))\cdot [t_I(X)]^3\nabla_{x^{(l)}}\alpha (X)=[\mu^{Cap}]'(t_I(X))\cdot [t_I(X)]^2
c_I^{\tau(l)}w_I(X).
\end{equation}
Now, lines two and three in \eqref{casedist}, yield the corresponding lines in \eqref{partderiv}. Clearly, the outcomes of \eqref{partderiv} depend continuously on $X$ thanks to the continuity of $t_I,w_I$. This also proves the continuous differentiability of $\beta_I$ on $\mathcal{O}$.
\end{proof}
\begin{cor}\label{phismoothnesss}
For each $I\in\Phi$, the functions $\tilde\varphi_I^{(1)}(\cdot)$, $\tilde\varphi_I^{(2)}(\cdot)$
defined in \eqref{tildephi} are continuously differentiable on $\mathcal{O}(\bar X)$ with
\[
\nabla\tilde\varphi_I^{(1)}(X)=-\nabla\beta_I(X)\quad\mbox{and}\quad\nabla\tilde\varphi_I^{(2)}(X)=\nabla\beta_I(X)\qquad\forall X\in\mathcal{O}(\bar X).
\]
\end{cor}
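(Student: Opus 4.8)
The plan is to observe that, in each of the two defining expressions in \eqref{tildephi}, the empirical-measure term is taken at the \emph{fixed} nominal point set $\bar X$ and hence is just a real constant, independent of the variable $X$. Consequently $\tilde\varphi_I^{(1)}$ and $\tilde\varphi_I^{(2)}$ agree with $-\mu^{Cap}\circ t_I$ and with $-\mu^{Cap}\circ(-t_I)$, respectively, up to an additive constant, and everything reduces to the continuous differentiability of these two compositions on $\mathcal{O}$, for which Lemma \ref{betderiv} has already done the work.

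Concretely, for $\tilde\varphi_I^{(1)}$ I would simply write $\tilde\varphi_I^{(1)}(X)=\mu^{Emp}(\bar X,w_I(\bar X),t_I(\bar X))-\beta_I(X)$ with $\beta_I=\mu^{Cap}\circ t_I$, invoke Lemma \ref{betderiv} to get $\beta_I\in\mathcal{C}^1(\mathcal{O})$, and conclude $\tilde\varphi_I^{(1)}\in\mathcal{C}^1(\mathcal{O})$ with $\nabla\tilde\varphi_I^{(1)}=-\nabla\beta_I$. For $\tilde\varphi_I^{(2)}$ I would first apply the reflection identity \eqref{minuscap}, $\mu^{Cap}(-t)=1-\mu^{Cap}(t)$, to rewrite $\mu^{Cap}(-t_I(X))=1-\beta_I(X)$; then $\tilde\varphi_I^{(2)}(X)=\mu^{Emp}(\bar X,-w_I(\bar X),-t_I(\bar X))-1+\beta_I(X)$ is again $\mathcal{C}^1$ on $\mathcal{O}$ by Lemma \ref{betderiv}, now with $\nabla\tilde\varphi_I^{(2)}=\nabla\beta_I$.

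I expect no real obstacle: the corollary is pure bookkeeping on top of Lemma \ref{betderiv}. The only point worth a moment's attention is that one should route $\tilde\varphi_I^{(2)}$ through \eqref{minuscap} so as to express it in terms of the very same $\beta_I$, and thereby read off the stated gradient with the reversed sign; the alternative of differentiating $\mu^{Cap}\circ(-t_I)$ directly via the chain rule and $[\mu^{Cap}]'$ would work equally well but is less clean.
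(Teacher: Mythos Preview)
Your proof is correct and follows essentially the same approach as the paper: both recognize that the empirical-measure term is constant and reduce to the differentiability of $\beta_I$ via Lemma \ref{betderiv}. The only minor difference is in handling $\tilde\varphi_I^{(2)}$: you invoke the reflection identity \eqref{minuscap} before differentiating, whereas the paper differentiates $\mu^{Cap}(-t_I(\cdot))$ directly via the chain rule and then uses the evenness of $[\mu^{Cap}]'$ from \eqref{casedist}; these are equivalent, and you even flag the paper's route as the alternative.
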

\begin{proof}
The first formula above is evident from the definition of $\tilde\varphi_I^{(1)}$ in \eqref{tildephi}. Similarly, the definition of $\tilde\varphi_I^{(2)}$ yields that
\[
\nabla\tilde\varphi_I^{(2)}(X)=[\mu^{Cap}]'(-t_I(X))\nabla t_I(X)=[\mu^{Cap}]'(t_I(X))\nabla t_I(X)=\nabla\beta_I(X),
\]
where the second equation follows from \eqref{casedist}.
\end{proof}

\noindent
We shall prove now that, locally around generic point sets, $\Lambda$
is a selection of the continuously differentiable functions $\tilde\varphi_I^{(1)},\tilde\varphi_I^{(2)}$.
\begin{proposition}\label{selection}
Let $\bar{X}\in\left(\mathbb{R}^{d}\right)^N$ be generic and $\mathcal{O}$ some open
neighborhood of $\bar{X}$ such that all $X\in\mathcal{O}$ are generic too. Then, there exists a neighborhood
$\mathcal{V}\subseteq\mathcal{O}$ of $\bar X$ such that
for all $X\in\mathcal{V}$ there are $I\in\Phi$ and $s\in\{1,2\}$ with
$\Lambda(X) = \tilde\varphi_I^{(s)}(X)$.
\end{proposition}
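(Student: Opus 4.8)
The plan is to construct the neighbourhood $\mathcal{V}$ directly (no contradiction needed), using two ingredients already available: the continuity of $\Lambda$ from Proposition \ref{lcontinuity}, and the fact that, although the elementary functions $\varphi_I^{(1)},\varphi_I^{(2)}$ from \eqref{phi} are discontinuous, they are nevertheless \emph{upper} semicontinuous. The latter holds because the empirical measure is upper semicontinuous in all its arguments simultaneously (as observed in the proof of Proposition \ref{lcontinuity}: $\mu^{Emp}$ can only decrease under small perturbations of the triple $(X,w,t)$), the maps $X\mapsto(X,\pm w_I(X),\pm t_I(X))$ are continuous on the set of generic point sets, and $X\mapsto\mu^{Cap}(\pm t_I(X))$ is continuous; hence each $\varphi_I^{(s)}$ is a difference of an upper semicontinuous function and a continuous one, so it is upper semicontinuous.

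The first step is to show that the set of pairs realizing the maximum \eqref{maxphi} cannot grow near $\bar X$. Put $\mathcal{A}:=\{(I,s)\in\Phi\times\{1,2\}\mid\Lambda(\bar X)=\varphi_I^{(s)}(\bar X)\}$. For each $(I,s)\notin\mathcal{A}$ we have $\varphi_I^{(s)}(\bar X)<\Lambda(\bar X)$, so picking $\varepsilon>0$ with $\varphi_I^{(s)}(\bar X)<\Lambda(\bar X)-2\varepsilon$, upper semicontinuity of $\varphi_I^{(s)}$ provides a neighbourhood of $\bar X$ on which $\varphi_I^{(s)}(X)<\Lambda(\bar X)-\varepsilon$, while continuity of $\Lambda$ provides one on which $\Lambda(X)>\Lambda(\bar X)-\varepsilon$; on their intersection $\varphi_I^{(s)}(X)<\Lambda(X)$, so $(I,s)$ does not attain the maximum at $X$. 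Since $(\Phi\times\{1,2\})\setminus\mathcal{A}$ is finite, intersecting these finitely many neighbourhoods yields a neighbourhood $\mathcal{V}'\subseteq\mathcal{O}$ of $\bar X$ on which every pair attaining the maximum \eqref{maxphi} for $\Lambda(X)$ lies in $\mathcal{A}$.

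The second step combines this with Lemma \ref{phiconnect}. Let $\mathcal{U}\subseteq\mathcal{O}$ be the neighbourhood furnished by that lemma and set $\mathcal{V}:=\mathcal{V}'\cap\mathcal{U}$. Given $X\in\mathcal{V}$, choose $(I,s)$ realizing the maximum in \eqref{maxphi} for $\Lambda(X)$ (a genuine maximum since $\Phi$ is finite), so $\Lambda(X)=\varphi_I^{(s)}(X)$. By the first step $(I,s)\in\mathcal{A}$, i.e.\ also $\Lambda(\bar X)=\varphi_I^{(s)}(\bar X)$. Lemma \ref{phiconnect} then gives $\mu^{Emp}(X,\pm w_I(X),\pm t_I(X))=\mu^{Emp}(\bar X,\pm w_I(\bar X),\pm t_I(\bar X))$, with signs chosen according to $s$; comparing the definitions \eqref{phi} and \eqref{tildephi} (which differ only in the empirical-measure term) this says exactly that $\varphi_I^{(s)}(X)=\tilde\varphi_I^{(s)}(X)$, whence $\Lambda(X)=\tilde\varphi_I^{(s)}(X)$, as required.

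I expect the first step to be the crux: it is precisely where the discontinuity of the $\varphi_I^{(s)}$ visible in Figure \ref{fig:discont} must be handled, and it works only because those discontinuities are one-sided in the favourable direction — a failure of \emph{lower} semicontinuity of $\mu^{Emp}$ would be harmless, whereas an upper-semicontinuity failure would allow pairs that are strictly inactive at $\bar X$ to realize $\Lambda$ at nearby points, severing the link to Lemma \ref{phiconnect}. Everything else is routine bookkeeping over the finite family $\Phi\times\{1,2\}$.
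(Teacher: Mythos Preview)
Your proof is correct and follows essentially the same two-step structure as the paper: first show that the active index set can only shrink near $\bar X$ (i.e.\ $\mathcal{A}(X)\subseteq\mathcal{A}(\bar X)$), then combine with Lemma \ref{phiconnect} to pass from $\varphi_I^{(s)}$ to $\tilde\varphi_I^{(s)}$. The only difference is presentational: the paper proves the first step by contradiction via sequences, whereas you argue directly by naming the underlying property (upper semicontinuity of each $\varphi_I^{(s)}$) and intersecting finitely many neighbourhoods --- a slightly cleaner packaging of the same observation the paper uses inline.
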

\begin{proof}
Let $\mathcal{U}\subseteq\mathcal{O}$ be the neighborhood of $\bar{X}$ from Lemma \ref{phiconnect} and define the set of active indices as
\begin{equation}\label{actind}
\mathcal{A}(X):=\big\{(I,s)\in\Phi\times\{1,2\}\mid
\Lambda(X) = \varphi_I^{(s)}(X)\big\}\quad\forall X\in\mathcal{O}.
\end{equation}
(see \eqref{maxphi}). We claim that there exists a neighborhood $\mathcal{V}\subseteq\mathcal{U}$ of $\bar{X}$
such that 
\begin{equation}\label{active}
\mathcal{A}(X)\subseteq \mathcal{A}(\bar X)\quad\forall X\in\mathcal{V}.
\end{equation}
If this wasn't the case, we could find sequences
$X_n\in\left(\mathbb{R}^d\right)^N$ and
$(I_n,s_n)\in\Phi\times\{1,2\}$
such that
\[
\Lambda(X_n) = \varphi_{I_n}^{(s_n)}(X_n),\quad
\Lambda(\bar X) > \varphi_{I_n}^{(s_n)}(\bar X) \quad\forall n\in\mathbb{N}\quad\mbox{and}\quad
X_n\rightarrow \bar{X}.
\]
Moreover, by passing to a subsequence, we may find some $\bar I \in \Phi$ and $\bar s \in\{1,2\}$ such that
\[
\Lambda(X_n) = \varphi_{\bar I}^{(\bar s)}(X_n)\quad\forall n\in\mathbb{N}\quad\mbox{and}\quad
\Lambda (\bar X) > \varphi_{\bar I}^{(\bar s)}(\bar X).
\]
Because $\Lambda$ is continuous at $\bar{X}$ by Proposition \ref{lcontinuity}, there is some $n_0\in\mathbb{N}$ such that
\[
\varphi_{\bar I}^{(\bar s)}(X_n) > \varphi_{\bar I}^{(\bar s)}(\bar X) + \frac{c}{2}\quad\forall n\geq n_0,\quad\mbox{where } c:= \Lambda (\bar X)-\varphi_{\bar I}^{(\bar s)}(\bar X)>0.
\]
Next, we use an argument already employed in the proof of Proposition \ref{lcontinuity}, namely that the definition in \eqref{empextend} easily implies for continuity reasons that the empirical measure at some triple $(X,w,t)$ is always larger than or equal to the empirical measure of triples $(X',w',t')$ in a sufficiently small neighborhood of $(X,w,t)$. Assuming, without loss of generality that $\bar{s}=1$ (the argument being exactly the same for $\bar{s}=2$), we therefore get for $n\geq n_0$ that
\begin{eqnarray*}
\mu^{Emp}(\bar X,w_{\bar I}(\bar X),t_{\bar I}(\bar X)) - \mu^{Cap}(t_{\bar I}(\bar X))+\frac{c}{2}&=&   
\varphi_{\bar I}^{(1)}(\bar X) + \frac{c}{2}<\varphi_{\bar I}^{(1)}(X_n)\\&=&
\mu^{Emp}(X_n,w_{\bar I}(X_n),t_{\bar I}(X_n)) - \mu^{Cap}(t_{\bar I}(X_n))\\
&\leq&\mu^{Emp}(\bar X,w_{\bar I}(\bar X),t_{\bar I}(\bar X)) - \mu^{Cap}(t_{\bar I}(X_n)).
\end{eqnarray*}
Passing to the limit on the right-hand side and exploiting the continuity of $\mu^{Cap}\circ t_{\bar I}$, we arrive at the contradiction
\[
-\mu^{Cap}(t_{\bar I}(\bar X))+\frac{c}{2}\leq -\mu^{Cap}(t_{\bar I}(\bar X))
\]
which proves \eqref{active}.

Now, fix an arbitrary $X\in\mathcal{V}$ and choose $I\in \Phi$ and
$s\in\{1,2\}$ such that $\Lambda(X) = \varphi_I^{(s)}(X)$. Then, $(I,s)\in\mathcal{A}(X)\subseteq\mathcal{A}(\bar X)$ by \eqref{active}, whence $\Lambda(\bar X) = \varphi_I^{(s)}(\bar X)$. 
Since $\mathcal{V}\subseteq\mathcal{U}$, Lemma \ref{phiconnect} 
yields that $\varphi_I^{(s)}(X) = \tilde\varphi_I^{(s)}(X)$. Thus, $\Lambda(X) = \tilde\varphi_I^{(s)}(X)$, as was to be shown.
\end{proof}

\noindent
We are now in a position to formulate the main result of this paper:
\begin{theorem}\label{lipschitz}
Let $\bar{X}=\left(\bar{x}^{(1)},\ldots,\bar{x}^{(N)}\right)\in\left(\mathbb{R}^{d}\right)^N$ be generic. Then, there exists 
some neighborhood $\mathcal{U}$ of $\bar{X}$ such that $X$ is generic for all $X\in\mathcal{U}$ and $\Lambda$ is Lipschitz continuous on $\mathcal{U}$. In other words, there exists some $L>0$ such that
\[
\left|\Lambda (X_1)-\Lambda (X_2)\right|\leq L\|X_1-X_2|\quad\forall
X_1,X_2\in\mathcal{U}.
\]
\end{theorem}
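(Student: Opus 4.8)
The proof rests entirely on the selection structure already isolated in Proposition~\ref{selection}: on a neighbourhood $\mathcal{V}$ of $\bar X$ (contained in the generic neighbourhood $\mathcal{O}$), the value $\Lambda(X)$ coincides, for every $X\in\mathcal{V}$, with $\tilde\varphi_I^{(s)}(X)$ for some $(I,s)\in\Phi\times\{1,2\}$ depending on $X$. Together with the continuity of $\Lambda$ from Proposition~\ref{lcontinuity}, this says that $\Lambda$ is a continuous selection of the finitely many $\mathcal{C}^1$-functions $\tilde\varphi_I^{(s)}$ in the sense of \cite{Scholtes2012}, which already yields the qualitative local Lipschitz statement. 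The plan is to make this quantitative by combining a mean value estimate for the smooth selection functions (using the explicit gradients from Corollary~\ref{phismoothnesss} and Lemma~\ref{betderiv}) with an elementary chaining argument that transfers the resulting uniform Lipschitz bound from the $\tilde\varphi_I^{(s)}$ to their continuous selection $\Lambda$.

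Concretely, I would first fix a closed ball $\mathcal{U}:=\overline{\mathbb{B}_r(\bar X)}\subseteq\mathcal{V}$; it is compact, convex, and consists of generic point sets because $\mathcal{V}\subseteq\mathcal{O}$. By Corollary~\ref{phismoothnesss} and Lemma~\ref{betderiv}, each $\tilde\varphi_I^{(s)}$ is continuously differentiable on $\mathcal{O}$ with $\nabla\tilde\varphi_I^{(s)}=\pm\nabla\beta_I$, and the partial gradients $\nabla_{x^{(l)}}\beta_I$ are given explicitly by \eqref{partderiv} and depend continuously on $X$. Hence $X\mapsto\|\nabla\tilde\varphi_I^{(s)}(X)\|$ is continuous and attains a maximum on the compact set $\mathcal{U}$, and I set $L:=\max_{(I,s)\in\Phi\times\{1,2\}}\max_{X\in\mathcal{U}}\|\nabla\tilde\varphi_I^{(s)}(X)\|$. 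Using $\|w_I(X)\|=1$ together with continuity (hence boundedness on $\mathcal{U}$) of $t_I$ and of the coefficients $c_I^{j}$, this $L$ can in turn be bounded in closed form from the three cases of \eqref{partderiv}, which is the promised explicitly computable Lipschitz constant. By the mean value inequality on the convex set $\mathcal{U}$, every $\tilde\varphi_I^{(s)}$ is then $L$-Lipschitz on $\mathcal{U}$.

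It remains to pass from the smooth selection functions to $\Lambda$ itself, and this is the only genuine difficulty: $\Lambda$ is a \emph{selection} of the continuous functions $\tilde\varphi_I^{(s)}$, but — as the discontinuities of the $\varphi_I^{(s)}$ and Figure~\ref{fig:discont} illustrate — it is \emph{not} their pointwise maximum, so the convenient ``a maximum of $L$-Lipschitz functions is $L$-Lipschitz'' shortcut is unavailable. To handle this I would argue along segments. Fix $X_1,X_2\in\mathcal{U}$, put $X(t):=(1-t)X_1+tX_2\in\mathcal{U}$ for $t\in[0,1]$ by convexity, and set $h(t):=\Lambda(X(t))$, a continuous function on $[0,1]$ which by Proposition~\ref{selection} equals $\tilde\varphi_{I(t)}^{(s(t))}(X(t))$ for some index $(I(t),s(t))\in\Phi\times\{1,2\}$ for every $t$. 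To obtain $|h(1)-h(0)|\leq L\,\|X_1-X_2\|$ it suffices to show that every subinterval is ``good'', i.e. $|h(b)-h(a)|\leq L\,\|X_1-X_2\|\,(b-a)$: if some $(a,b)$ failed this, let $c^{*}:=\sup\{c\in[a,b]\,:\,|h(c)-h(a)|\leq L\,\|X_1-X_2\|\,(c-a)\}$; by continuity of $h$ this supremum is attained and, since $(a,b)$ is not good, $c^{*}<b$; choosing $c_n\downarrow c^{*}$ along which the (finitely many possible) selection indices are constant, say equal to $(J,r)$, one gets $h(c^{*})=\tilde\varphi_J^{(r)}(X(c^{*}))$ in the limit, hence $|h(c_n)-h(c^{*})|\leq L\,\|X_1-X_2\|\,(c_n-c^{*})$ because $\tilde\varphi_J^{(r)}$ is $L$-Lipschitz on $\mathcal{U}$, and the triangle inequality then makes $c_n$ good for large $n$, contradicting the definition of $c^{*}$. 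Thus $\Lambda$ is $L$-Lipschitz on $\mathcal{U}$, which is the assertion; the corresponding statement for the genuine spherical cap discrepancy $\Delta$ follows at once from Corollary~\ref{restriction}, since $\Delta$ is the restriction of $\Lambda$ to $\mathcal{U}\cap(\mathbb{S}^{d-1})^N$. The main obstacle is exactly this transfer step — supplying (or invoking from \cite{Scholtes2012}) the continuous-selection Lipschitz principle while retaining the explicit uniform constant $L$ — the subtlety being that the active selection function may switch infinitely often along the segment.
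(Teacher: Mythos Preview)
Your proposal is correct and takes the same approach as the paper: both use Propositions~\ref{lcontinuity} and~\ref{selection} to recognise $\Lambda$ as a continuous selection of finitely many $\mathcal{C}^1$-functions on a neighbourhood of $\bar X$, and then conclude Lipschitz continuity from the principle that such selections inherit a common Lipschitz bound. The only difference is that the paper simply cites \cite[Proposition~4.1.2]{Scholtes2012} for this last step, whereas you give a (correct) self-contained supremum/chaining argument along segments and extract the explicit constant $L=\max_{(I,s)}\max_{\mathcal{U}}\|\nabla\tilde\varphi_I^{(s)}\|$---which the paper also derives, but in a remark following the proof rather than within it.
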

\begin{proof}
By Propositions \ref{lcontinuity} and \ref{selection}, there exists a neighborhood $\mathcal{U}$ of $\bar{X}$ such that $\Lambda$ is continuous and a selection of finitely many continuously differentiable functions on $\mathcal{U}$ (which means that $\Lambda$ is piecewise differentiable in the terminology of Scholtes \cite[page 91]{Scholtes2012}). In particular, $\Lambda$ is a continuous selection of Lipschitz functions on $\mathcal{U}$, hence $\Lambda$ is Lipschitz continuous on $\mathcal{U}$ itself \cite[Proposition 4.1.2.]{Scholtes2012}.
\end{proof}

\noindent As an immediate consequence, we get the desired local Lipschitz continuity of the (original) spherical cap discrepancy $\Delta$  around generic points on the sphere:
\begin{cor}\label{lipschitzcor}
Let $\bar{X}=\left(\bar{x}^{(1)},\ldots,\bar{x}^{(N)}\right)\in\left(\mathbb{S}^{d-1}\right)^N$ be generic. Then, there exists 
some neighborhood $\mathcal{U}$ of $\bar{X}$ such that $X$ is generic for all $X\in\mathcal{U}\cap\mathbb{S}^{d-1}$ and $\Delta$ is Lipschitz continuous on $\mathcal{U}\cap\mathbb{S}^{d-1}$. In other words, there exists some $L>0$ such that
\[
\left|\Delta (X_1)-\Delta (X_2)\right|\leq L\|X_1-X_2\|\quad\forall
X_1,X_2\in\mathcal{U}\cap\mathbb{S}^{d-1}.
\]
\end{cor}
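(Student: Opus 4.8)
The plan is to deduce the statement directly from Theorem \ref{lipschitz} together with Corollary \ref{restriction}, which identifies $\Delta$ with the restriction of $\Lambda$ to generic point sets on the sphere. First I would observe that a generic $\bar{X}\in\left(\mathbb{S}^{d-1}\right)^N$ is in particular a generic element of $\left(\mathbb{R}^{d}\right)^N$, so Theorem \ref{lipschitz} applies: there exists a neighborhood $\mathcal{U}$ of $\bar{X}$ in $\left(\mathbb{R}^{d}\right)^N$ such that every point set in $\mathcal{U}$ is generic and $\Lambda$ is Lipschitz continuous on $\mathcal{U}$ with some constant $L>0$.

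Next I would restrict attention to $\mathcal{U}\cap\left(\mathbb{S}^{d-1}\right)^N$ (which is what the shorthand $\mathcal{U}\cap\mathbb{S}^{d-1}$ in the statement abbreviates). Every $X$ in this set is a generic point set lying on the sphere, so Corollary \ref{restriction} yields $\Delta(X)=\Lambda(X)$. Consequently, for arbitrary $X_1,X_2\in\mathcal{U}\cap\left(\mathbb{S}^{d-1}\right)^N$,
\[
\left|\Delta(X_1)-\Delta(X_2)\right| = \left|\Lambda(X_1)-\Lambda(X_2)\right| \leq L\|X_1-X_2\|,
\]
which is exactly the claimed Lipschitz estimate with the same constant $L$. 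Genericity of all points of $\mathcal{U}\cap\left(\mathbb{S}^{d-1}\right)^N$ is already part of the conclusion of Theorem \ref{lipschitz} (alternatively, it follows from Proposition \ref{genrem}).

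There is essentially no obstacle here: all the substantive work — the representation formula of Theorem \ref{genform}, the continuity of $\Lambda$ (Proposition \ref{lcontinuity}), its representation as a continuous selection of the $\mathcal{C}^1$-functions $\tilde\varphi_I^{(s)}$ (Proposition \ref{selection}), and the invocation of Scholtes' result that continuous selections of Lipschitz functions are Lipschitz — has already been carried out for $\Lambda$. The only point worth a brief remark is that the inequality $\left|\Lambda(X_1)-\Lambda(X_2)\right|\leq L\|X_1-X_2\|$ holds for \emph{all} pairs in $\mathcal{U}$, not merely along segments contained in $\mathcal{U}$; hence passing to the (non-convex) subset $\mathcal{U}\cap\left(\mathbb{S}^{d-1}\right)^N$ preserves the estimate without any loss, so the corollary follows immediately.
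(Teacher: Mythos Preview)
Your proof is correct and takes exactly the same approach as the paper: the paper's proof simply reads ``This follows immediately from Theorem \ref{lipschitz} and Corollary \ref{restriction},'' which is precisely what you have spelled out in detail.
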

\begin{proof}
This follows immediately from Theorem \ref{lipschitz} and Corollary \ref{restriction}.
\end{proof}
It is noteworthy that the Lipschitz constant $L$ in Theorem \ref{lipschitz} (which is the same as in Corollary~\ref{lipschitzcor}) can be explicitly estimated from the data by using the formulae in Lemma~\ref{betderiv}. Indeed, as a consequence of Proposition~\ref{selection} and of \cite[Proposition 4.1.2]{Scholtes2012}, we obtain that the Lipschitz constant $L$ of $\Lambda$ on $\mathcal{U}$ can be represented by the Lipschitz constants $L_i^{(s)}$ of the continuously differentiable functions $\tilde\varphi_I^{(s)}$ as
\[
L=\max\limits_{(I,s)\in\Phi\times\{1,2\}}L_i^{(s)}.
\]
Clearly, the $L_i^{(s)}$ can be chosen greater than but arbitrarily close to the norms $\|\nabla\tilde\varphi_I^{(s)}(\bar{X})\|$ by shrinking the neighbourhood $\mathcal{U}$. By Corollary \ref{phismoothnesss} and Lemma \ref{betderiv}, a rough upper estimate of the $L_i^{(s)}$ would be
\[
C_d\max\limits_{I\in\Phi, l\in I}|c_I^{\tau(l)}|
\]
(a finer estimate would incorporate the expressions $t_I^2(X)$).

\section{Optimal quantization and necessary optimality conditions}\label{secoptquant}

Finding an optimal point set on the sphere minimizing the spherical cap discrepancy amounts to the optimization problem
\begin{equation}\label{optquant}
\mbox{minimize }\Delta (X),   
\end{equation}
where $\Delta:\left(\mathbb{S}^{d-1}\right)^N\to [0,1]$ is the spherical cap discrepancy introduced in \eqref{capdef0}. 
This problem is also referred to as {\it optimal quantization} and has to be distinguished from the construction of low discrepancy sequences because the cardinality $N$ of the point set $X$ is fixed. 
Our aim is to establish necessary optimality conditions a point set has to satisfy in order to be optimal. Note that there is no hope for optimality conditions which are sufficient at the same time due to the lack of convexity of $\Lambda$. We will restrict ourselves here to generic point sets. The degenerate case seems to be more delicate to handle and is left for future research. 

While \eqref{optquant} is a free (without constraints) optimization problem, the domain of the objective function is a manifold. Standard optimization problems, however, are usually defined on normed spaces subjected to possible further constraints in order to conveniently derive nonsmooth optimality conditions by using tools from generalized differentiation such as the subdifferentials in the sense of Clarke \cite{clarke1983} or Mordukhovich \cite{Mordukhovich2006}. For this reason it is beneficial to equivalently rewrite problem \eqref{optquant} as an optimization problem in the Euclidean space with the additional constraint that the arguments belong to the sphere componentwise:
\begin{equation}\label{quant}
\mbox{minimize }\Lambda (X)\mbox{ subject to } X\in\left(\mathbb{S}^{d-1}\right)^N, X\mbox{ generic}.
\end{equation}
The restriction to generic $X$ is necessary because $\Lambda$ is defined for such point sets only. While the genericity constraint cannot be conveniently described as a classical (in-)equality constraint, it is an open property. This means, that if we are interested in checking whether some generic point set $\bar{X}$ satisfies certain necessary optimality conditions, then we don't have to care about this constraint, because we know it persists to hold in an open neighbourhood $\mathcal{O}$ of $\bar{X}$ and, thus, has no impact on the necessary optimality condition at all. Now, the equivalence of\eqref{quant} with \eqref{optquant} around some generic $X\in\left(\mathbb{S}^{d-1}\right)^N$ is evident from
Corollary \ref{restriction}.
We represent the normalization constraint on $X=\left(x^{(1)},\ldots,x^{(N)}\right)$ as the set of smooth equalities
\[
\|x^{(l)}\|^2=1\quad\forall l\in\{1,\ldots ,N\}.
\]
Then the derivative with respect to $X$ of the $l$-th constraint function equals the matrix
\[
2\big(0 \cdots 0\,|\,x^{(l)}\,|\,0\cdots 0\big)
\]
Clearly, all these derivatives are linearly independent due to $x^{(l)}\neq 0$. Now, the local Lipschitz continuity of $\Lambda$ and the continuous differentiability of the constraint functions implies that a generic point set $\bar{X}$ being a (local) solution of the optimal quantization problem \eqref{quant} has to satisfy the inclusion
\begin{equation}\label{clarkeopt}
\big(\lambda_1\bar{x}^{(1)}|\cdots |\lambda_N\bar{x}^{(N)}\big)\in\partial^C\Lambda (\bar{X})    
\end{equation}
for certain multipliers $\lambda_1,\ldots ,\lambda_N\in\mathbb{R}$, where $\partial^C\Lambda$ refers to the Clarke subdifferential of $\Lambda$ \cite[p.235-236]{clarke1983}. In order to work with such an abstract condition, one has to make the Clarke subdifferential more explicit: From \cite[Proposition 4.3.1.]{Scholtes2012}, we know that at generic $\bar{X}$ the identity
\[
\partial^C\Lambda (\bar{X})={\rm conv}\,\left\{\nabla\tilde\varphi_I^{(s)}(\bar{X})\mid (I,s)\in\mathcal{A}^*(\bar{X})\subseteq\Phi\times\{1,2\}\right\}
\]
holds true, where $\mathcal{A}^*(\bar{X})$ refers to the so-called {\it set of essentially active indices} (see \cite[p. 92]{Scholtes2012}) and 'conv' refers to the convex hull. Rather than providing a precise definition of the difficult to handle index set $\mathcal{A}^*(\bar{X})$ here, we just recall from its definition in \cite[p. 92]{Scholtes2012}, that it is always a subset of the set of active indices $\mathcal{A}(\bar{X})$ defined in \eqref{actind}
\[
\mathcal{A}^*(\bar{X})\subseteq\big\{(I,s)\in\Phi\times\{1,2\}\mid\Lambda (\bar{X}) = \tilde{\varphi}_I^{(s)}(\bar{X})\big\}=\big\{(I,s)\in\Phi\times\{1,2\}\mid\Lambda(\bar{X}) = \varphi_I^{(s)}(\bar{X})\big\}=\mathcal{A}(\bar{X}).
\]
Consequently, we arrive at an explicit upper estimate of $\partial^C\Lambda (\bar{X})$ just in terms of active gradients:
\[
\partial^C\Lambda (\bar{X})\subseteq {\rm conv}\,\left\{\nabla\tilde\varphi_I^{(s)}(\bar{X})\mid (I,s)\in\mathcal{A}(\bar{X})\right\}.
\]
This upper estimate can now be clearly used to establish a weakened but explicit necessary optimality condition as follows:
A generic point set $\bar{X}$ being a (local) solution of the optimal quantization problem \eqref{quant} has to satisfy the inclusion
\[
\big(\lambda_1\bar{x}^{(1)}|\cdots |\lambda_N\bar{x}^{(N)}\big)\in {\rm conv}\,\left\{\nabla\tilde\varphi_I^{(s)}(\bar{X})\mid (I,s)\in\mathcal{A}(\bar{X})\right\}    
\]
for certain multipliers $\lambda_1,\ldots ,\lambda_N\in\mathbb{R}$. Resolving for the convex hull, we may extend this statement to:
If a generic point set $\bar{X}$ is a (local) solution of the optimal quantization problem \eqref{quant}, then there exist multipliers $\lambda_1,\ldots ,\lambda_N\in\mathbb{R}$ and $\gamma_{(I,s)}\geq 0$ for $(I,s)\in\mathcal{A}(\bar{X})$ with
\[
\lambda_l\bar{x}^{(l)}=\sum\limits_{(I,s)\in\mathcal{A}(\bar{X})} \gamma_{(I,s)}\nabla_{x^{(l)}}\tilde\varphi_I^{(s)}(\bar{X})\quad\mbox{and}\quad\sum\limits_{(I,s)\in\mathcal{A}(\bar{X})}\gamma_{(I,s)}=1\quad (l=1,\ldots ,N). 
\]
Taking into account that $\nabla_{x^{(l)}}\tilde\varphi_I^{(s)}(\bar{X})=(-1)^s\nabla_{x^{(l)}}\beta_I^{(s)}(\bar{X})$ for $(I,s)\in\Phi\times\{1,2\}$ and $l=1,\ldots ,N$ by Corollary~\ref{phismoothnesss}, and that $\nabla_{x^{(l)}}\beta_I^{(s)}(\bar{X})=0$ if $l\notin I$ by Lemma \ref{betderiv}, we may further rewrite this last relation as
\[
\lambda_l\bar{x}^{(l)}=\sum\limits_{\{(I,s)\in\mathcal{A}(\bar{X}) \mid l\in I\}} \gamma_{(I,s)}(-1)^s\nabla_{x^{(l)}}\beta_I^{(s)}(\bar{X})\quad\mbox{and}\quad\sum\limits_{(I,s)\in\mathcal{A}(\bar{X})}\gamma_{(I,s)}=1\quad (l=1,\ldots ,N). 
\]
This relation is now fully explicit thanks to the explicit gradient formulae in Lemma \ref{betderiv} and it can be used to figure out potential candidates for local minima of the spherical cap discrepancy (by verifying the necessary optimality conditions) or to exclude certain generic point sets as local or global minima (by showing that the necessary optimality conditions cannot hold). We shall not pursue this concrete application of optimality conditions here and rather leave this to future research.

\section*{Conclusions}

We have proven the Lipschitz continuity of the spherical cap discrepancy around generic point sets on the sphere. Of course, it would be desirable to prove or disprove the Lipschitz continuity on the whole sphere. It seems that we will not be able to show the positive result using the approach taken here (via the representation formula \eqref{formula_lambda}). On the other hand, a counter example isn't easy to construct either. We therefore strongly believe that the following conjecture holds true (note that local Lipschitz continuity around arbitrary point sets implies the global Lipschitz continuity by compactness of the sphere):
\begin{conjecture}
The spherical cap discrepancy $\Delta:\mathbb{S}^{(d-1)}\to [0,1]$ is Lipschitz continuous. 
\end{conjecture}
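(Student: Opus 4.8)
\emph{A proof proposal.}
The representation formula \eqref{formula_lambda} is intrinsically tied to genericity, and the Lipschitz constants it produces blow up as some selection $X_I$ approaches singularity; so for the conjecture I would abandon that route and argue directly from the definition \eqref{capdef}, in the one-sided form provided by Corollary~\ref{capdefcor}. The whole plan rests on the observation that, for $d\geq 3$, the one-dimensional cap measure $\mu^{cap}$ is genuinely Lipschitz on all of $[-1,1]$: from \eqref{capmeasure} one gets $\mu^{cap}(t)=(1-t)/2$ when $d=3$, while for $d\geq 4$ the derivative $(\mu^{cap})'(t)=-C_d(1-t^2)^{(d-3)/2}$ is bounded by $C_d$ on $(-1,1)$ and extends continuously to $\pm 1$ (compare the discussion around \eqref{capextension}). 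Write $L_\mu$ for the resulting Lipschitz constant of $\mu^{cap}$ on $[-1,1]$, so that $L_\mu=1/2$ for $d=3$ and $L_\mu=C_d$ for $d\geq 4$.

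The geometric core is that moving every point of a spherical point set by at most $\varepsilon$ can only ``thicken'' a closed half space by $\varepsilon$ in the height variable. Concretely, I would fix $X,Y\in(\mathbb{S}^{d-1})^N$, put $\varepsilon:=\max_i\|x^{(i)}-y^{(i)}\|$, and use Propositions~\ref{maxcap} and~\ref{empgreatercap} together with Corollary~\ref{capdefcor} to select $w^*\in\mathbb{S}^{d-1}$, $t^*\in[-1,1]$ with $\Delta(X)=\mu^{emp}(X,w^*,t^*)-\mu^{cap}(t^*)$. By Cauchy--Schwarz, $\langle w^*,x^{(i)}\rangle\geq t^*$ forces $\langle w^*,y^{(i)}\rangle\geq t^*-\varepsilon$, hence $\mu^{emp}(Y,w^*,s)\geq\mu^{emp}(X,w^*,t^*)$ for every $s\in[-1,1]$ with $s\leq t^*-\varepsilon$.

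Two cases then remain. If $t^*-\varepsilon\geq -1$, the choice $s:=t^*-\varepsilon$ is admissible in \eqref{capdef} and yields
\[
\Delta(Y)\;\geq\;\mu^{emp}(Y,w^*,s)-\mu^{cap}(s)\;\geq\;\mu^{emp}(X,w^*,t^*)-\mu^{cap}(t^*)-L_\mu\varepsilon\;=\;\Delta(X)-L_\mu\varepsilon,
\]
where the last inequality uses $\mu^{cap}(t^*-\varepsilon)\leq\mu^{cap}(t^*)+L_\mu\varepsilon$. If instead $t^*-\varepsilon<-1$, then $t^*$ lies within $\varepsilon$ of $-1$, so $\mu^{cap}(t^*)>\mu^{cap}(-1)-L_\mu\varepsilon=1-L_\mu\varepsilon$ and therefore $\Delta(X)<L_\mu\varepsilon$, whence $\Delta(Y)\geq 0>\Delta(X)-L_\mu\varepsilon$ trivially. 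In either case $\Delta(Y)\geq\Delta(X)-L_\mu\varepsilon$; exchanging $X$ and $Y$ gives $|\Delta(X)-\Delta(Y)|\leq L_\mu\varepsilon\leq L_\mu\|X-Y\|$. This would prove the conjecture globally on $(\mathbb{S}^{d-1})^N$, with an explicit Lipschitz constant that is moreover uniform in the point set, unlike the local bound behind Theorem~\ref{lipschitz}.

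For $d\geq 3$ I do not anticipate a real obstruction; the only delicate point is the boundary case $t^*=-1$, which is harmless precisely because it forces $\Delta(X)$ to be $O(\varepsilon)$. The genuine obstacle is the case $d=2$ (excluded in the later sections of the paper as trivial), where $\mu^{cap}$ is only $1/2$-H\"older near $\pm 1$ and the reparametrization estimate above collapses; whether $\Delta$ is still Lipschitz there --- i.e.\ whether the supremum over caps in \eqref{capdef} compensates the non-Lipschitzness of $\mu^{cap}$ --- would need a genuinely different argument and might conceivably fail. I would also emphasize that this elementary route settles only the bare Lipschitz property and does not recover the piecewise-$\mathcal{C}^1$ structure of $\Delta$ near generic point sets that underlies the Clarke-subdifferential computation of Section~\ref{secoptquant}; in that sense it complements rather than supersedes Theorem~\ref{lipschitz}.
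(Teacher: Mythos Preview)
The paper does not prove this statement: it is explicitly left as a conjecture in the Conclusions, with the authors noting that their representation-formula approach via \eqref{formula_lambda} does not seem to extend beyond generic point sets. Your argument, by contrast, abandons that formula entirely and works directly with the one-sided form of Corollary~\ref{capdefcor}, and as far as I can see it is correct for $d\geq 3$ and settles the conjecture in that range.

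The key difference from the paper's method for the weaker Theorem~\ref{lipschitz} is that you never touch the combinatorics of index sets $I\in\Phi$ or the matrices $(X_I^\top X_I)^{-1}$: instead of controlling how the finitely many selection functions $\tilde\varphi_I^{(s)}$ vary with $X$, you simply observe that perturbing each point by at most $\varepsilon$ can only shift membership in a half-space $H(w^*,t^*)$ into the $\varepsilon$-thickened half-space $H(w^*,t^*-\varepsilon)$, and then you absorb that thickening via the Lipschitz constant of the one-variable function $\mu^{cap}$ on $[-1,1]$. This is both more elementary and more general than the paper's route---it needs no genericity, no continuous-selection machinery, and it produces a global, explicit constant $L_\mu$ ($=1/2$ for $d=3$, $=C_d$ for $d\geq 4$) uniform over all of $(\mathbb{S}^{d-1})^N$. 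The two case distinctions ($t^*-\varepsilon\geq -1$ versus $t^*-\varepsilon<-1$) are handled cleanly; in the second case the observation that $t^*>-1$ (cf.\ the proof of Theorem~\ref{continuity}) together with $\mu^{emp}\leq 1$ indeed forces $\Delta(X)<L_\mu\varepsilon$.

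Your caveats are well placed: the argument genuinely fails for $d=2$ because $(\mu^{cap})'(t)=-C_2(1-t^2)^{-1/2}$ is unbounded near $\pm 1$, and you correctly point out that your proof, unlike Theorem~\ref{lipschitz}, does not deliver the piecewise-$\mathcal{C}^1$ selection structure on which the Clarke-subdifferential calculus of Section~\ref{secoptquant} rests. So the two results are complementary rather than one subsuming the other.
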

\noindent
Apart from proving this conjecture, future research will be devoted to the concrete application of the necessary optimality conditions derived in Section \ref{secoptquant} and to a numerical solution of the optimal quantization provlem exploiting Lipschitz continuity of the spherical cap discrepancy.

\section*{Acknowledgments}

This work is supported by the German Research Foundation (DFG)
within the project B04 of CRC/Transregio 154
and by the FMJH Program Gaspard Monge in optimization and operations
research including support to this program by EDF.


\appendix
\section{Appendix}
\begin{proposition}\label{disccoinc}
For the discrepancies presented in the introduction it holds that
$\Delta (X)=\Delta^0(X) =\Delta^1(X)$ for all $X=\left(x^{(1)},\ldots ,x^{(N)}\right)\in\left(\mathbb{S}^{d-1}\right)^N$.
\end{proposition}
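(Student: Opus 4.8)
The plan is to show the two chains of inequalities $\Delta^1(X)\le\Delta(X)\le\Delta^0(X)$ and $\Delta^0(X)\le\Delta^1(X)$, which together force all three quantities to coincide. The only genuinely nontrivial passage is $\Delta^0(X)\le\Delta^1(X)$, i.e.\ that admitting all $(w,t)\in\mathbb{R}^d\times\mathbb{R}$ with closed (equivalently open, by the analogue of \eqref{empcomp}) half-spaces does not produce a larger supremum than restricting to $(w,t)\in\mathbb{S}^{d-1}\times[-1,1]$ with strict inequalities; everything else is bookkeeping.

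First I would dispose of the easy inclusions. Since the family of pairs $(w,t)\in\mathbb{S}^{d-1}\times[-1,1]$ used for $\Delta$ and $\Delta^1$ is a subfamily of all $(w,t)\in\mathbb{R}^d\times\mathbb{R}$ used for $\Delta^0$, and since for $w\neq 0$ one may rescale $w$ to unit norm without changing $H(w,t)$ (replacing $t$ by $t/\|w\|$), the real point is that the interesting half-spaces are those whose boundary hyperplane meets the sphere, i.e.\ $|t|\le 1$ after normalization; for $|t|>1$ or $w=0$ the set $\mathbb{S}^{d-1}\cap H(w,t)$ is either empty or all of $\mathbb{S}^{d-1}$, and in both cases $\mu^{emp}(X,w,t)=\mu^{cap}(w,t)\in\{0,1\}$ (using that all $x^{(i)}$ lie on the sphere), so such pairs contribute $0$ to the supremum and can be discarded. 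This reduces $\Delta^0$ to a supremum over $\mathbb{S}^{d-1}\times[-1,1]$ with closed half-spaces, which is exactly $\Delta(X)$; hence $\Delta^0(X)=\Delta(X)$. It remains to compare $\Delta$ (closed caps) with $\Delta^1$ (open caps).

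For $\Delta^1(X)\le\Delta(X)$: given $(w,t)\in\mathbb{S}^{d-1}\times[-1,1]$ and the open half-space $H^{<}(w,t):=\{x\mid\langle w,x\rangle>t\}$, approximate it from inside by closed half-spaces $H(w,t+\varepsilon_n)$ with $\varepsilon_n\downarrow 0$; for $n$ large, $\mathbb{S}^{d-1}\cap H(w,t+\varepsilon_n)$ captures the same finitely many points of $X$ as $\mathbb{S}^{d-1}\cap H^{<}(w,t)$ (no $x^{(i)}$ has $\langle w,x^{(i)}\rangle\in(t,t+\varepsilon_n]$ once $\varepsilon_n$ is below the smallest such positive gap, unless there is none, in which case the empirical measures agree trivially), while $\mu^{cap}(t+\varepsilon_n)\to\mu^{cap}(t)$ by continuity of $\mu^{cap}$ from \eqref{capmeasure}, and $\mu^{cap}$ of an open cap equals $\mu^{cap}$ of the corresponding closed cap since the bounding circle has $\sigma$-measure zero. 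Passing to the limit yields $|\mu^{emp,1}(X,w,t)-\mu^{cap}(t)|\le\Delta(X)$, hence $\Delta^1(X)\le\Delta(X)$. The reverse direction $\Delta(X)\le\Delta^1(X)$ is the one where I expect the only real subtlety: I would invoke Proposition~\ref{empgreatercap}/Corollary~\ref{capdefcor} to pick $(w^*,t^*)$ realizing $\Delta(X)$ with $\mu^{emp}(X,w^*,t^*)>\mu^{cap}(t^*)$ and, by Proposition~\ref{maxcap}, with some $x^{(i)}$ on the boundary $\langle w^*,x^{(i)}\rangle=t^*$. Then perturb \emph{outward}: for small $\varepsilon>0$ the open cap $H^{<}(w^*,t^*-\varepsilon)$ contains $\mathbb{S}^{d-1}\cap H(w^*,t^*)$, so $\mu^{emp,1}(X,w^*,t^*-\varepsilon)\ge\mu^{emp}(X,w^*,t^*)$, and for $\varepsilon$ small enough it captures no extra points (taking $\varepsilon$ below the smallest gap $t^*-\langle w^*,x^{(j)}\rangle>0$ over $j\notin$ the active set), so equality holds; meanwhile $\mu^{cap}(t^*-\varepsilon)\to\mu^{cap}(t^*)$. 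Therefore $\mu^{emp,1}(X,w^*,t^*-\varepsilon)-\mu^{cap}(t^*-\varepsilon)\to\Delta(X)$, giving $\Delta(X)\le\Delta^1(X)$ and closing the loop. The main obstacle is purely this handling of boundary points under the open/closed switch, which the outward/inward perturbations together with Propositions~\ref{maxcap}--\ref{empgreatercap} resolve.
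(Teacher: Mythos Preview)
Your argument is correct, and for the equality $\Delta(X)=\Delta^0(X)$ it is essentially the same case analysis as in the paper (discard degenerate $w=0$ or $|t|>\|w\|$, then normalize). The difference lies in how you handle $\Delta(X)=\Delta^1(X)$.

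The paper avoids perturbations entirely by a one-line complementation trick: the open half-space $\{x:\langle w,x\rangle>t\}$ is the complement of the closed half-space $\{x:\langle -w,-x\rangle\ge -t\}$, so $\mu^{emp}_0(X,w,t)=1-\mu^{emp}(X,-w,-t)$ and likewise $\mu^{cap}_0(w,t)=1-\mu^{cap}(-w,-t)$ by \eqref{capcomp}. Hence $|\mu^{emp}_0(X,w,t)-\mu^{cap}_0(w,t)|=|\mu^{emp}(X,-w,-t)-\mu^{cap}(-w,-t)|$, and since $(w,t)\mapsto(-w,-t)$ is a bijection of $\mathbb{S}^{d-1}\times[-1,1]$ onto itself, $\Delta^1(X)=\Delta(X)$ immediately. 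Your route instead approximates open caps by closed ones (and vice versa) via small shifts in $t$, and for the direction $\Delta(X)\le\Delta^1(X)$ invokes the maximizer structure from Propositions~\ref{maxcap} and~\ref{empgreatercap} to ensure the outward perturbation $t^*-\varepsilon$ stays in $[-1,1]$ and does not pick up extra points. This is valid (there is no circularity, as those propositions do not rely on the present one), but it is heavier machinery than needed: the complementation identity delivers both inequalities at once without any appeal to the existence or sign of a maximizer.
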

\begin{proof}
In order to show that $\Delta (X)=\Delta^0(X)$, it is evidently sufficient to verify the relation 
\begin{equation}\label{dominate}
\left\vert \mu ^{emp}\left( X,w,t\right)
-\mu ^{cap}\left(w,t\right) \right\vert\leq\Delta (X)\quad\forall 
(w,t)\in\mathbb{R}^{d}\times\mathbb{R}.
\end{equation}
Let $(w,t)\in\mathbb{R}^{d}\times\mathbb{R}$ be arbitrary and assume first that $w=0$ and $t\leq 0$. Then, $H(w,t)=\mathbb{R}^{d}$ and, hence, $\mu ^{emp}\left( X,w,t\right)=\mu ^{cap}\left(w,t\right)=1$ and \eqref{dominate} follows trivially. Similarly, if $w=0$ and $t>0$, then $H(w,t)=\emptyset$ and, hence, $\mu ^{emp}\left( X,w,t\right)=\mu ^{cap}\left(w,t\right)=0$, so that \eqref{dominate} follows again. Next, let $w\neq 0$ and $t<-\|w\|$. Then, $x\in H(w,t)$ for all $x\in\mathbb{S}^{d-1}$ 
because of
\[
\langle w,x\rangle\geq -\|w\|>t\quad\forall x\in\mathbb{S}^{d-1}
\]
which implies $\mu ^{emp}\left( X,w,t\right)=\mu ^{cap}\left(w,t\right)=1$.
Similarly, if $t>\|w\|$, then $x\notin H(w,t)$ for all $x\in\mathbb{S}^{d-1}$ because any such $x$ satisfies the relation
\[
\langle w,x\rangle\leq \|w\| < t,
\]
so we have $\mu ^{emp}\left( X,w,t\right)=\mu ^{cap}\left(w,t\right)=0$.
In both cases, \eqref{dominate} follows trivially as before. It remains to consider the case that $w\neq 0$ and $|t|\leq\|w\|$. Then, $H(w,t)=H(w^*,t^*)$ for $w^*:=w/\|w\|\in\mathbb{S}^{d-1}$ and $t^*:=t/\|w\|\in [-1,1]$. Accordingly, and by virtue of \eqref{capdef0},
\[
\left\vert \mu ^{emp}\left( X,w,t\right)-\mu ^{cap}\left(w,t\right) \right\vert =\left\vert \mu ^{emp}\left( X,w^*,t^*\right)-\mu ^{cap}\left(w^*,t^*\right) \right\vert\leq\Delta(X).
\]
It remains to show that 
\[
\Delta (X)=\Delta^1(X):=\sup_{w\in\mathbb{S}^{d-1},\,t\in \left[ -1,1\right]}\left\vert \mu ^{emp}_0\left( X,w,t\right)
-\mu ^{cap}_0\left(w,t\right) \right\vert,
\]
where, with $H_0(w,t):=\left\{ x\in \mathbb{R}^{d}|\left\langle w,x\right\rangle >t\right\}$, one defines $\mu ^{cap}_0\left(w,t\right):=\sigma \left( \mathbb{S}^{d-1}\cap
H_0(w,t)\right)$ and
\[
\mu ^{emp}_0\left( X,w,t\right):=N^{-1}\cdot \#\left\{ i\in \left\{
1,\ldots ,N\right\} |x^{(i)}\in H_0(w,t)\right\}
\]
for all $w\in\mathbb{S}^{d-1}$ and $t\in \left[ -1,1\right]$. We immediately check from the definitions, that for arbitrary $(w,t)\in\mathbb{S}^{d-1}\times [-1,1]$ one has that
\[
\mu ^{emp}_0\left( X,w,t\right)=1-\mu ^{emp}\left( X,-w,-t\right).
\]
Moreover, by \eqref{capcomp}, for arbitrary $(w,t)\in\mathbb{S}^{d-1}\times [-1,1]$ one has that
\[
\mu ^{cap}_0\left(w,t\right)=\mu ^{cap}\left(w,t\right)=1-\mu ^{cap}\left(-w,-t\right).
\]
Now, the claimed equality $\Delta (X)=\Delta^1(X)$ follows readily from the identity
\[
\left\vert \mu ^{emp}_0\left( X,w,t\right)
-\mu ^{cap}_0\left(w,t\right) \right\vert =\left\vert \mu ^{emp}\left( X,-w,-t\right)
-\mu ^{cap}\left(-w,-t\right) \right\vert\quad\forall (w,t)\in\mathbb{S}^{d-1}\times [-1,1].
\]
\end{proof}
\begin{lemma}\label{mainlemma}
Let $\bar{X}\in\left(\mathbb{R}^{d}\right)^N$ be generic. Let $I^*\in\Phi$ be some index set realizing the maximum in \eqref{formula_lambda}, so that $\Lambda (\bar{X})=\mu^{Emp}(\bar{X},w^*,t^*)-\mu^{Cap}(t^*)$ for some $(w^*,t^*)\in\pm\left\{(w_{I^*}(\bar{X}),t_{I^*}(\bar{X})\right\}$. Then, the following holds true:
\begin{equation}\label{intermed}
\begin{split}
\forall \varepsilon & > 0\,\,\exists\delta > 0\,\,\forall X\in \mathbb{B}_\delta (\bar{X})\,\,\exists J\in\Phi \,\,\exists (w,t)\in\pm\{(w_{J}(X),t_{J}(X))\}:\\
&\mu^{Emp}(X,w,t) = \mu^{Emp}(\bar{X},w^*,t^*),\,\,
\mu^{Cap}(t)<\mu^{Cap}(t^*)+\varepsilon .
\end{split}
\end{equation}
\end{lemma}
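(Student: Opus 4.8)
The plan is as follows. Write $(\bar w,\bar t):=(w^*,t^*)$ and assume, without loss of generality, the sign convention $(\bar w,\bar t)=(w_{I^*}(\bar X),t_{I^*}(\bar X))$ (the other sign is handled verbatim, replacing $(w_J,t_J)$ by $(-w_J,-t_J)$ everywhere below). By Proposition~\ref{normed}, $\bar t>0$ and $\langle\bar w,\bar x^{(i)}\rangle=\bar t$ for $i\in I^*$. I would partition the index set according to the position relative to the boundary hyperplane $A:=\{x\mid\langle\bar w,x\rangle=\bar t\}$:
\[
P:=\{i\mid\langle\bar w,\bar x^{(i)}\rangle>\bar t\},\quad K:=\{i\mid\langle\bar w,\bar x^{(i)}\rangle=\bar t\},\quad Q:=\{i\mid\langle\bar w,\bar x^{(i)}\rangle<\bar t\},
\]
so that $I^*\subseteq K$ and $\mu^{Emp}(\bar X,\bar w,\bar t)=N^{-1}(\#P+\#K)=:m/N$, the value that must be reproduced.

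Call an index set $J$ \emph{admissible} if $J\subseteq K$, $1\le\#J\le d$ and $\bar w\in\mathrm{span}\{\bar x^{(i)}\mid i\in J\}$; this holds in particular whenever $I^*\subseteq J$ and whenever $\#J=d$ (the latter by genericity). For every admissible $J$ one gets $w_J(\bar X)=\bar w$ and $t_J(\bar X)=\bar t$, since $\bar w$ is then \emph{the} unit vector in $\mathrm{span}\{\bar x^{(i)}\mid i\in J\}$ whose inner products with the $\bar x^{(i)}$, $i\in J$, all share a common positive value, and \eqref{tIwI} characterises $w_J(\bar X),t_J(\bar X)$ in exactly this way (alternatively, iterate Lemma~\ref{indexexpansion} starting from $I^*$). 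Since $w_I,t_I$ depend continuously on the generic point set and there are only finitely many admissible $J$, I would first fix $\rho>0$ with $\mu^{Cap}(t)<\mu^{Cap}(\bar t)+\varepsilon$ for all $t\ge\bar t-\rho$ and with $|\langle\bar w,\bar x^{(i)}\rangle-\bar t|>2\rho$ for $i\in P\cup Q$, and then choose $\delta>0$ so small that, for every $X\in\mathbb{B}_\delta(\bar X)$ and every admissible $J$, one has $|t_J(X)-\bar t|<\rho$, $\langle w_J(X),x^{(i)}\rangle>t_J(X)$ for $i\in P$ and $\langle w_J(X),x^{(i)}\rangle<t_J(X)$ for $i\in Q$.

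With these preparations, \eqref{intermed} reduces to the claim that for each $X\in\mathbb{B}_\delta(\bar X)$ there is an admissible $J$ with $\langle w_J(X),x^{(i)}\rangle\ge t_J(X)$ for all $i\in K\setminus J$: given such a $J$, the closed half space $H(w_J(X),t_J(X))$ then contains exactly the points indexed by $P\cup K$, whence $\mu^{Emp}(X,w_J(X),t_J(X))=m/N$, while $t_J(X)>\bar t-\rho$ yields $\mu^{Cap}(t_J(X))<\mu^{Cap}(\bar t)+\varepsilon$. If $\#K\le d$ this is immediate with $J:=K$ (admissible since $I^*\subseteq K$). If $\#K>d$, I would pick $J$ minimising $t_J(X)$ among all admissible index sets: should some $i_1\in K\setminus J$ satisfy $\langle w_J(X),x^{(i_1)}\rangle<t_J(X)$ while $\#J<d$, then $J\cup\{i_1\}$ is again admissible and the Schur–complement identity from the proof of Lemma~\ref{indexexpansion} gives $t_{J\cup\{i_1\}}^{-2}(X)=t_J^{-2}(X)+(p-1)^2/S$ with $S>0$ and $p=\langle w_J(X),x^{(i_1)}\rangle/t_J(X)\ne 1$, hence $t_{J\cup\{i_1\}}(X)<t_J(X)$, contradicting minimality.

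The residual case $\#J=d$ — where the minimiser can no longer be enlarged and a single index swap need not decrease $t_J$ — is the step I expect to be the main obstacle. To treat it I would use that, for $X$ close enough to $\bar X$, the $d+1$ points $\{x^{(i)}\mid i\in J\cup\{i_1\}\}$ lie in a thin slab around $A$, so their convex hull excludes the origin; therefore at least one of the $d+1$ facet hyperplanes of this $d$-simplex has the opposite vertex strictly on the far side of the origin, and a careful comparison of the "circum-values'' $t_{J'}(X)$ of these $d+1$ facets, matched against the sign of that separation, should either contradict the minimality of $J$ or directly exhibit an admissible $J'$ of cardinality $d$ with all of $K$ on the closed side of its hyperplane. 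Once this index-selection step is secured, the verification of \eqref{intermed} is the routine computation indicated above.
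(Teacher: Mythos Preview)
Your setup — the partition into $P,K,Q$, the notion of admissible $J$ with $(w_J(\bar X),t_J(\bar X))=(\bar w,\bar t)$, the uniform continuity choices for $\delta$, and the reduction of \eqref{intermed} to ``find an admissible $J$ with all of $K$ on the closed side of $H(w_J(X),t_J(X))$'' — coincides with the paper's proof. The divergence is entirely in how you \emph{select} $J$ when $\#K>d$: you minimise $t_J(X)$ over admissible $J$ and then try to repair the minimiser by local swaps, whereas the paper works with the convex hull of the perturbed points $\{x^{(i)}\}_{i\in K}$ directly.

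Your minimisation route has a genuine gap, and your suspicion about the residual case is justified: the minimiser need not work, and your simplex swap need not produce either a smaller $t$ or a globally good $J'$. Take $d=2$, $K=\{1,\dots,5\}$, $\bar x^{(1)}=(0,1)$, $\bar x^{(2)}=(2,1)$, $\bar x^{(3)}=(4,1)$, $\bar x^{(4)}=(1,1)$, $\bar x^{(5)}=(5,1)$ (generic, all on $y=1$, so $\bar w=(0,1),\bar t=1$), and perturb only $x^{(4)}=(1,1-\epsilon)$. Among all admissible $J$ one computes $t_{\{2,4\}}(X)=(1-2\epsilon)/\sqrt{1+\epsilon^2}$ as the strict minimum, yet $x^{(3)}=(4,1)$ and $x^{(5)}=(5,1)$ lie on the wrong side of the corresponding line. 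Running your residual step with $i_1=3$ on the simplex $\{2,3,4\}$ yields the unique facet with positive offset $J'=\{3,4\}$; but $t_{\{3,4\}}\approx 1-\tfrac{4}{3}\epsilon>t_{\{2,4\}}$ (no contradiction to minimality) \emph{and} $x^{(5)}$ is still on the wrong side of $H(w_{J'},t_{J'})$ (so $J'$ does not have ``all of $K$ on the closed side''). Neither branch of your dichotomy fires. The underlying reason is that $\{2,4\}$ is not a facet of $\mathrm{conv}\{x^{(i)}\}_{i\in K}$ at all (points $2$ and $3$ are not extreme), and hyperplanes through non-facet $d$-subsets can have smaller offset than every facet while cutting through the polytope.

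The paper's argument bypasses this by going global: it writes $P:=\mathrm{conv}\{x^{(i)}\}_{i\in K}$ in an irredundant H-representation $P=\{x\mid\langle v_k,x\rangle\ge\tau_k,\ k=1,\dots,m\}$. Since all $K$-points are near height $\bar t>0$ one has $0\notin P$, so some $\tau_{k_0}>0$; the corresponding facet is $(d-1)$-dimensional and therefore contains at least $d$ vertices, any $d$ of which form $J$ with $(w_J(X),t_J(X))=(v_{k_0},\tau_{k_0})$, and every $K$-point lies on the $\ge t_J$ side automatically from the H-representation. (If $\mathrm{int}\,P=\emptyset$ the points are still in a common hyperplane and $J=I^*$ already works; the sign case $(w^*,t^*)=(-w_{I^*},-t_{I^*})$ is handled by picking a facet with $\tau_{k_1}<0$, whose existence follows from boundedness of $P$.) This facet-selection step is the missing idea in your sketch.
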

\begin{proof}
We assume from the very beginning that the $\delta$ to be found in \eqref{intermed} is small enough to satisfy the inclusion $\mathbb{B}_\delta (\bar{X})\subseteq\mathcal{O}$ from Remark \ref{remgen}, so that all $X$ from this ball are generic.
We introduce the index sets
\[
I_0:=\left\{ i\in\{1,\ldots ,N\right\}\mid\,\langle w^*,\bar{x}^{(i)}\rangle =t^*\},\quad I_1:=\left\{ i\in\{1,\ldots ,N\right\}\mid\,\langle w^*,\bar{x}^{(i)}\rangle >t^*\}
\]
Proposition \ref{normed} ensures that $\bar{X}_{I^*}^Tw_{I^*}(\bar{X})=t_{I^*}(\bar{X}){\bf 1}$, whence $\bar{X}_{I^*}^Tw^*=t^*{\bf 1}$ due to $(w^*,t^*)\in\pm\{(w_{I^*}(\bar{X}),t_{I^*}(\bar{X})\}$. It follows that $I^*\subseteq I_0$. 

\noindent\underline{Case 1: $I^*=I_0$}.
Without loss of generality, we may also assume that $(w^*,t^*)=(w_{I^*}(\bar{X}),t_{I^*}(\bar{X}))$ (the opposite case following by absolutely analogous arguments). Then, by definition,
\[
\mu^{\rm Emp}(\bar{X},w_{I^*}(\bar{X}),t_{I^*}(\bar{X}))=N^{-1}(\#I_0+\#I_1).
\]
For arbitrarily given $\varepsilon >0$ we choose $\delta >0$ such that $\mathbb{B}_\delta (\bar{X})\subseteq\mathcal{O}$ for the open neighborhood from the statement of Proposition \ref{lcontinuity} (i.e. all $X\in\mathbb{B}_\delta (\bar{X})$ are generic). Moreover, $\delta >0$ is chosen small enough to satisfy (by continuity of the mappings $t_{I^*},w_{I^*}$)
\[
\langle w_{I^*}(X),x^{(i)}\rangle >t_{I^*}(X)\quad\forall
i\in I_1\quad\langle w_{I^*}(X),x^{(i)}\rangle <t_{I^*}(X)\quad\forall
i\in (I_0\cup I_1)^c
\]
for all $X\in\mathbb{B}_\delta (\bar{X})$ and all $i\in\{1,\ldots ,N\}$. Moreover, by Proposition \ref{normed}, $X_{I^*}^Tw_{I^*}(X)=t_{I^*}(X){\bf 1}$ for all such $X$, hence $\langle w_{I^*}(X),x^{(i)}\rangle =t_{I^*}(X)$ for all $i\in I^*=I_0$. Altogether, this implies that
$\langle w_{I^*}(X),x^{(i)}\rangle\geq t_{I^*}(X)$ if and only if $i\in I_0\cup I_1$. Therefore,
\[
\mu^{\rm Emp}(X,w_{I^*}(X),t_{I^*}(X))=N^{-1}(\#I_0+\#I_1)=\mu^{\rm Emp}(\bar{X},w_{I^*}(\bar{X}),t_{I^*}(\bar{X}))\quad\forall X\in\mathbb{B}_\delta (\bar{X}).
\]
Finally, by continuity of $\mu^{Cap}$, we may further shrink $\delta >0$ such that $\mu^{Cap}(t_{I^*}(X))<\mu^{Cap}(t_{I^*}(\bar{X})))+\varepsilon$ for all $X\in\mathbb{B}_\delta (\bar{X})$. Thus, we verify \eqref{intermed} by the (constant) selection $J:=I^*, w:=w_{I^*}(\bar{X}), t:=t_{I^*}(\bar{X})$  for each $X\in\mathbb{B}_\delta (\bar{X})$.

\vspace{0.1cm}
\noindent\underline{Case 2: $I^*\subsetneq I_0$}. First, we observe that we may assume $\#I^*=d$. Indeed, if $\#I^*<d$, then we may select some $j\in I_0\setminus I^*$. From $(w^*,t^*)\in\pm\{(w_{I^*}(\bar{X}),t_{I^*}(\bar{X})\}$ and by definition of $I_0$, we derive that $\langle w_{I^*}(\bar{X}),\bar{x}^{(j)}\rangle =t_{I^*}(\bar{X})$. Now, Lemma \ref{indexexpansion} yields that 
\[
(w_{I^*}(\bar{X}),t_{I^*}(\bar{X}))=(w_{I^*\cup\{j\}}(\bar{X}),t_{I^*\cup\{j\}}(\bar{X})).
\]
Therefore, we may have replaced the index set $I^*\in\Phi$ realizing the maximum in \eqref{formula_lambda} from the very beginning by the larger index set
$I^*\cup\{j\}\in\Phi$ (with $\#(I^*\cup\{j\})=\#I^*+1\leq d$) realizing the same value in \eqref{formula_lambda}. Calling this larger index set $I^*$ again,
we may proceed by adding further indices from $I_0$ to $I^*$ until $I^*=I_0$, in which case we are back to the situation we already dealt with above, or until $\#I^*=d$, which will be the setting we follow next.

\vspace{0.1cm}
\noindent\underline{Case 2.1: $(w^*,t^*)=(w_{I^*}(\bar{X}),t_{I^*}(\bar{X}))$}. The definition of $I_0$ and Proposition \ref{normed} yield that
\[
\langle w^*,\bar{x}^{(i)}\rangle =t^*=t_{I^*}(\bar{X})>0\quad\forall i\in I_0.
\]
Consequently, given an arbitrary $\varepsilon >0$, we may choose $\delta >0$ such that for all $X\in\mathbb{B}_\delta (\bar{X})\subseteq\mathcal{O}$,
\begin{eqnarray}
&\langle w^*,x^{(i)}\rangle\geq t^*/2>0\quad\forall i\in I_0\quad\mbox{and}\quad\mu^{Cap}(t_I(X))<\mu^{Cap}(t_I(\bar{X}))+\varepsilon\quad\forall I\in\Phi,&\label{continuity1}\\[1ex] 
&
\left.
\begin{array}{l}
\langle w_{I}(\bar{X}),x^{(i)}\rangle >t_I(\bar{X})\Rightarrow \langle w_{I}(X),x^{(i)}\rangle >t_I(X)\\
\langle w_{I}(\bar{X}),x^{(i)}\rangle <t_I(\bar{X})\Rightarrow \langle w_{I}(X),x^{(i)}\rangle <t_I(X)
\end{array}
\right\}
\quad\forall i\in\{1,\ldots ,N\}\,\,\forall I\in\Phi ,&\label{continuity2}
\end{eqnarray}
where the continuity of $\mu^{Cap}$ and of the $w_I,t_I$ has been exploited. In order to verify \eqref{intermed}, we fix an arbitrary $X\in\mathbb{B}_\delta (\bar{X})$ and find $J,w,t$ as required there. To this aim, denote by $P$ the convex hull of the point set $\{x^{(i)}\}_{i\in I_0}$. 

\vspace{0.1cm}
\noindent\underline{Case 2.1.a): ${\rm int}\,P\neq\emptyset$}. Clearly, $0\notin P$ due to the first relation of \eqref{continuity1}. It is well-known from the theory of polyhedra (see, e.g., \cite[Theorem 2.15 (7)]{ziegler}), that there exists a representation
\begin{equation}\label{hrep}
P=\{x\in\mathbb{R}^d\mid\langle v_k,x\rangle\geq\tau_k\quad (k=1,\ldots , m)\}\quad (v_k\in\mathbb{S}^{d-1}, \tau_k\in\mathbb{R}),
\end{equation}
such that for $k'=1,\ldots ,m$ each set 
\[
F_{k'}:=\{x\in\mathbb{R}^d\mid\langle v_{k'},x\rangle =\tau_{k'},\,\,\langle v_k,x\rangle\geq\tau_k\quad (k=1,\ldots , m, k\neq k')\}
\]
is a facet of $P$. 
There exists some $k_0\in\{1,\ldots ,m\}$ such that $\tau_{k_0}>0$ because otherwise the contradiction $0\in P$ would result. As a facet of a bounded polyhedron $P\subseteq\mathbb{R}^d$ with ${\rm int}\,P\neq\emptyset$, $F_{k_0}$ must contain at least $d$ vertices of $P$. Since the vertices of $P$ are contained in the set $\{x^{(i)}\}_{i\in I_0}$, there exists a subset $J
\subseteq I_0$ with $\#J=d$ and $\{x^{(i)}\}_{i\in J}\subseteq F_{k_0}$. Hence, by definition of $F_{k_0}$, $X_J^Tv_{k_0}=\tau_{k_0}{\bf 1}$. 
By Proposition \eqref{normed}, $X_J^Tw_J(X)=t_J(X)\bf 1$, with $X_J^T$ being a regular $(d,d)$- matrix by genericity of $X$. Then,
\[
v_{k_0}=\tau_{k_0}\left(X_J^T\right)^{-1}{\bf 1},\quad w_J(X)=t_J(X)\left(X_J^T\right)^{-1}\bf 1.
\]
Since $\|v_{k_0}\|=\|w_J(X)\|=1$ and $\tau_{k_0},t_J(X)>0$, (see Proposition \ref{normed}), it follows that $(w_J(X),t_J(X))=(v_{k_0},\tau_{k_0})$. Now, \eqref{hrep} yields that
\begin{equation}\label{esti0}
\langle w_J(X),x^{(i)}\rangle =\langle v_{k_0},x^{(i)}\rangle\geq\tau_{k_0}=t_J(X)\quad\forall i\in I_0.
\end{equation}
With $\bar{X}_J^Tw_J(\bar{X})=t_J(\bar{X})$ (by Proposition \ref{normed})
and $\bar{X}_J^Tw^*=t^*$ (by $J\subseteq I_0$), the same reasoning as before yields that $(w_J(\bar{X}),t_J(\bar{X}))=(w^*,t^*)$. After having fixed $J\in\Phi$, we also fix $(w,t):=(w_J(X),t_J(X))$ as required in \eqref{intermed}. Then, by \eqref{esti0}, \eqref{continuity2} and by the definitions of $I_0, I_1$, one gets that
\begin{eqnarray}
\mu^{Emp}(X,w,t)&=&\mu^{Emp}(X,w_J(X),t_J(X))=N^{-1}\#\{i\in\{1,\ldots ,N\}\mid\langle w_J(X),x^{(i)}\rangle\geq t_J(X)\}\nonumber\\
&=&N^{-1}(\#I_0+\#I_1) \,=\,\mu^{Emp}(X,w^*,t^*),\label{muempest}
\end{eqnarray}
which is the first desired relation in \eqref{intermed}. The second one follows immediately from the second relation in \eqref{continuity1}. 

\vspace{0.1cm}
\noindent\underline{Case 2.1.b): ${\rm int}\,P=\emptyset$}. Then, $P$ as a polytope must be contained in some hyperplane $H$:
\[
P\subseteq H:=\{x\in\mathbb{R}^d\mid\langle\hat{w},x\rangle=\hat{t}\}\quad(\hat{w}\in\mathbb{S}^{(d-1)}, \hat{t}\in\mathbb{R}).
\]
We may assume that $\hat{t}\geq 0$. In particular,
$\langle\hat{w},x^{(i)}\rangle=\hat{t}$ for all $i\in I_0$,
or, $X_{I_0}^T\hat{w}=\hat{t}\bf{1}$, for short. Since also $X_{I^*}^Tw_{I^*}(X)=t_{I^*}(X)\bf 1$ (by Proposition \ref{normed}), and recalling that $\#I^*=d$ the same reasoning as above \eqref{esti0} yields that $(w_{I^*}(X),t_{I^*}(X))=(\hat{w},\hat{t})$. This implies that the choice $J:=I^*$ satisfies \eqref{esti0} (actually as an equation) so that in view of $(w^*,t^*)=(w_{I^*}(\bar{X}),t_{I^*}(\bar{X}))$ we may repeat the reasoning after \eqref{esti0} and \eqref{muempest} in order to derive the two relations in \eqref{intermed} in that alternative case too.

\vspace{0.1cm}
\noindent\underline{Case 2.2): $(w^*,t^*)=(-w_{I^*}(\bar{X}),-t_{I^*}(\bar{X}))$}. We observe that, in case of $\tau_k\geq 0$ for all $k=1,\ldots ,m$, $P$ must he unbounded according to \eqref{hrep} because from $x^{(1)}\in P$ it would follow that 
\[
\langle v_k,\lambda x^{(1)}\rangle =\lambda\langle v_k, x^{(1)}\rangle\geq\lambda \tau_k\geq \tau_k\quad\forall\lambda\geq 1\,\,\forall k=1,\ldots , m.
\]
This entails that $\lambda x^{(1)}\in P$ for all $\lambda\geq 1$, whence $P$ would be unbounded because $x^{(1)}\neq 0$ thanks to the genericity of $X$. However, $P$ is bounded as a convex combination of finitely many points. Therefore, there exists some $k_1\in\{1,\ldots ,m\}$ with $\tau_{k_1}<0$. Now, we can repeat exactly the argumentation from the first case above (referring to $\tau_{k_0}>0$), just with reversed signs.
\end{proof}

\bibliography{references}

\end{document}